\documentclass[12pt,reqno]{amsart}
\usepackage{amssymb}
\usepackage{graphicx}
\usepackage{xcolor}
\usepackage{bbm}

\usepackage[all]{xy}
\DeclareFontFamily{U}{mathb}{\hyphenchar\font45}
\DeclareFontShape{U}{mathb}{m}{n}{
      <5> <6> <7> <8> <9> <10> gen * mathb
      <10.95> mathb10 <12> <14.4> <17.28> <20.74> <24.88> mathb12
      }{}
\DeclareSymbolFont{mathb}{U}{mathb}{m}{n}
\DeclareMathSymbol{\righttoleftarrow}{3}{mathb}{"FD}

\theoremstyle{plain}
\newtheorem{prop}{Proposition}[section]
\newtheorem{theo}[prop]{Theorem}
\newtheorem{coro}[prop]{Corollary}
\newtheorem{lemm}[prop]{Lemma}
\theoremstyle{remark}
\newtheorem{rema}[prop]{Remark}

\theoremstyle{definition}
\newtheorem{defi}[prop]{Definition}

\newtheorem{exam}[prop]{Example}
\numberwithin{equation}{section}

\newcommand{\A}{{\mathbb A}}
\newcommand{\C}{{\mathbb C}}
\newcommand{\PP}{{\mathbb P}}
\newcommand{\bP}{{\mathbb P}}
\newcommand{\Q}{{\mathbb Q}}
\newcommand{\F}{{\mathbb F}}
\newcommand{\G}{{\mathbb G}}
\newcommand{\N}{{\mathbb N}}

\newcommand{\Z}{{\mathbb Z}}

\newcommand{\cB}{{\mathcal B}}

\newcommand{\cD}{{\mathcal D}}
\newcommand{\cE}{{\mathcal E}}
\newcommand{\cF}{{\mathcal F}}

\newcommand{\cO}{{\mathcal O}}
\newcommand{\cI}{{\mathcal I}}

\newcommand{\cK}{{\mathcal K}}
\newcommand{\cL}{{\mathcal L}}
\newcommand{\cM}{{\mathcal M}}
\newcommand{\cN}{{\mathcal N}}
\newcommand{\cS}{{\mathcal S}}
\newcommand{\cT}{{\mathcal T}}
\newcommand{\cX}{{\mathcal X}}
\newcommand{\cY}{{\mathcal Y}}

\newcommand{\cU}{{\mathcal U}}

\newcommand{\cW}{{\mathcal W}}

\newcommand{\fA}{{\mathfrak A}}
\newcommand{\sD}{{\mathfrak D}}
\newcommand{\fK}{{\mathfrak K}}

\newcommand{\fs}{{\mathfrak s}}
\newcommand{\BC}{{\mathcal B}{\mathcal C}}

\newcommand{\rB}{{\mathrm B}}
\newcommand{\rH}{{\mathrm H}}

\newcommand{\rN}{{\mathrm N}}

\newcommand{\iJ}{{\mathrm{IJ}}}
\newcommand{\GL}{{\mathrm{GL}}}
\newcommand{\PGL}{{\mathrm{PGL}}}

\newcommand{\ra}{\rightarrow}
\newcommand{\lra}{\longrightarrow}

\newcommand{\bA}{{\mathbb A}}
\newcommand{\bC}{{\mathbb C}}
\newcommand{\bF}{{\mathbb F}}
\newcommand{\bG}{{\mathbb G}}

\newcommand{\bN}{{\mathbb N}}
\newcommand{\bQ}{{\mathbb Q}}
\newcommand{\bR}{{\mathbb R}}
\newcommand{\bZ}{{\mathbb Z}}

\newcommand{\Am}{\mathrm{Am}}

\newcommand{\fS}{{\mathfrak S}}

\newcommand{\eqto}{\stackrel{\lower1.5pt\hbox{$\scriptstyle\sim\,$}}\to}
\newcommand{\eqdashto}{\stackrel{\lower1.5pt\hbox{$\scriptstyle\sim\,$}}\dashrightarrow}
\newcommand{\actsfromleft}{\mathrel{\reflectbox{$\righttoleftarrow$}}}
\newcommand{\actsfromright}{\righttoleftarrow}
\DeclareMathOperator{\Alg}{Alg}
\DeclareMathOperator{\Gal}{Gal}

\DeclareMathOperator{\Pic}{Pic}
\DeclareMathOperator{\Spec}{Spec}

\DeclareMathOperator{\Hom}{Hom}
\DeclareMathOperator{\End}{End}
\DeclareMathOperator{\Br}{Br}
\DeclareMathOperator{\Aut}{Aut}
\DeclareMathOperator{\Ker}{Ker}

\DeclareMathOperator{\Var}{Var}
\DeclareMathOperator{\Burn}{Burn}
\DeclareMathOperator{\Bir}{Bir}
\DeclareMathOperator{\Cr}{Cr}

\DeclareMathOperator{\Ind}{Ind}

\newcommand{\oBurn}{{\overline\Burn}}

\begin{document}
\title[Equivariant birational geometry]{Invariants in equivariant birational geometry}

\author[Andrew Kresch]{Andrew Kresch}
\address{
  Institut f\"ur Mathematik,
  Universit\"at Z\"urich,
  Winterthurerstrasse 190,
  CH-8057 Z\"urich, Switzerland
}
\email{andrew.kresch@math.uzh.ch}

\author[Yuri Tschinkel]{Yuri Tschinkel}
\address{
  Courant Institute,
  251 Mercer Street,
  New York, NY 10012, USA
}
\email{tschinkel@cims.nyu.edu}

\address{Simons Foundation\\
160 Fifth Avenue\\
New York, NY 10010\\
USA}

\date{February 20, 2026}

\begin{abstract}
We discuss invariants in equivariant birational geometry.
\end{abstract}

\maketitle

\section{Introduction}
\label{sec.intro}
In this survey, we consider problems in classical birational geometry and their extensions to the setting of equivariant geometry. The focus is on new invariants, introduced in \cite{BnG}, building on \cite{KT}, \cite{KPT}, \cite{Bbar}, and 
\cite{HKTsmall}. 
In our applications, we work over an algebraically closed ground field $k$ of characteristic zero. 

The term {\em birational geometry} refers to the characterization of isomorphism classes of function fields of algebraic varieties over $k$, i.e., fields $K$ which are finitely generated over $k$. 
Of particular interest is the {\em  (stable) rationality problem}, i.e., 
the characterization of {\em (stably) rational} function fields. 
Recall that an algebraic variety $X$ over $k$ is called {\em rational}, respectively, {\em stably rational}, if its function field $K=k(X)$, respectively, the field $K(x_1,\ldots, x_n)$ for suitable $n$, 
is isomorphic to a purely transcendental extension of $k$. 
An interesting related notion is that of {\em unirationality}, 
i.e., the property that $K\subseteq k(x_1,\ldots, x_n)$.
The (stable) rationality problem is settled in dimension 2, but remains elusive in dimensions $\ge 3$. 

One can ask about birationality of varieties with group actions. Concretely, let $G$ be a finite group, acting on $K=k(X)$, and trivially on $k$. If the action is generically free, then $G$ is identified with a subgroup of 
$$
\Bir\Aut(X),
$$
the group of birational automorphisms of $X$ over $k$. The main question in this context is: 

\medskip

\noindent
{\em When are two such subgroups conjugate in $\Bir\Aut(X)$?}

\medskip

There is always a smooth projective model for which the action is \emph{regular} (see Section \ref{sect:basic}).
Taking $X$ to be such a model, we can view $G$ as a subgroup of 
$$
\Aut(X),
$$
the group of biregular automorphisms of $X$. 
For example, when $X=\bP^2$, the projective plane, we have $\Aut(X)\simeq \PGL_3$, and it is known classically how to distinguish finite subgroups up to conjugation in $\PGL_3$.
But the problem of distinguishing finite subgroups of $\PGL_3$ up to conjugation in the \emph{Cremona group} 
$$
\mathrm{Cr}_2:=\Bir\Aut(\bP^2),
$$ 
the group of birational automorphism of $\bP^2$, is a recent advance \cite{CTZ-p2}.

In this paper, we discuss new 
invariants in $G$-equivariant birational 
geometry, and in particular, the 
{\em equivariant Burnside group}
$$
\Burn_n(G),
$$
introduced in \cite{BnG}. 
Needless to say, there 
is an enormous literature on equivariant birational geometry; we will not be able to give a complete account of all important ideas and techniques. We will point to sources that should allow an interested reader to learn more about this fascinating subject. 

One of the motivations to study $G$-birational geometry lies in strongly suggestive analogies with birational geometry over nonclosed fields, a booming subject. 
These analogies have been emphasized in \cite{manin} and \cite{manin2}:
the role of $G$ is played by Galois symmetries, which act on special loci, geometric invariants, etc. 
There are also appreciable differences between these theories. We will not pursue this theme here, but refer to \cite{HTtors}, \cite{HT-quad}, \cite{KT-dp} for recent developments inspired by this dual point of view. 

\medskip

Here is a roadmap of this paper.
In Sections \ref{sec.key} and \ref{sect:basic} we discuss the basics of equivariant birational geometry and the evolution of ideas and constructions that led us to the definition of $\Burn_n(G)$:
\begin{itemize}
\item {\em analytic tools}: motivic integration and (specialization of) rationality,
\item {\em geometric tools}: equivariant weak factorization, birational rigidity, stacks. 
\end{itemize}
We give a short summary of known obstructions in equivariant birational geometry in  Section~\ref{sect:obstr}. In Section~\ref{sect:amitsur}, we discuss the Amitsur group and introduce a higher version of the classical Amitsur invariant, based on the Leray spectral sequence for  motivic complexes of stacks.
In Section~\ref{sect:model}, we explain how to choose 
suitable birational models for $G$-actions. In Section~\ref{sect:defi}, we recall the definition of equivariant Burnside groups from \cite{BnG} and explain how to compute the class of the action. 
We comment on first structural properties of these new invariants in Section~\ref{sect:first}: 
\begin{itemize}
\item {\em filtrations},
\item {\em incompressible symbols},
\item {\em specialization}.
\end{itemize}
We present several applications in Section~\ref{sect:apply} and discuss limitations of these invariants towards the end of the paper, in Section~\ref{sect:scifi}.

\bigskip

\noindent
\textbf{Acknowledgments:}
The authors thank Joseph Ayoub for extensive discussions and essential input for the treatment of motivic cohomology.
The authors are very grateful to Brendan Hassett for his interest and collaboration on related projects. The second author was partially supported by NSF grant 2301983.

\section{Key ideas}
\label{sec.key}
Throughout, we work over a field $k$ of characteristic zero. 

\subsection*{Motivic integration}
Motivic integration was introduced by Kontsevich in his generalization of Batyrev's theorem 
\cite{batyrev}: birational Calabi-Yau varieties have equal {\em Hodge numbers}. 
It is now a major research direction, with many applications, e.g., to the study of singularities, McKay correspondence, etc.; see \cite{CLNS}.

Let 
$$
\Var_{n,k}
$$
be the set of isomorphism classes of algebraic varieties over $k$ of dimension $n$, and put
$$
\Var_k = \bigsqcup_{n\ge 0} \Var_{n,k},
\qquad
\Var_k^{\le d} = \bigsqcup_{0\le n\le d} \Var_{n,k}.
$$
We will write $[X]$ for the class of an algebraic variety (i.e., a reduced, separated, finite-type $k$-scheme) in $\Var_k$. 
The {\em Grothendieck ring}
$$
\mathrm K_0(\Var_k)
$$
is defined as the quotient of the $\bZ$-module 
$$
\bZ[\Var_k]
$$
by the relations:
\begin{itemize}
\item {\em Excision}:  $[X]=[Z]+[X\setminus Z]$, for closed $Z\subset X$,
\item {\em Product}: $[X\times Y] = [X]\times [Y]$, for all $X$, $Y$.
\end{itemize}
The ring $\mathrm K_0(\Var_k)$ carries a natural filtration, by the images of $\bZ[\Var_k^{\le d}]$. 

A {\em motivic measure} with values in a commutative ring $R$ is a ring homomorphism 
$$
\mu\colon \mathrm K_0(\Var_k) \to R.
$$
For example, for $k=\C$ we take $R=\bZ[u,v]$ and 
define, for smooth projective $X$,  
$$
\mu([X]) := \sum_{p,q} (-1)^{p+q}\,  h^{p,q}(X)u^pv^q,
$$
the {\em Hodge-Deligne polynomial}. 
Kontsevich's refinement of Batyrev's theorem shows that 
birational Calabi-Yau varieties have the same class in $\mathrm K_0(\Var_k)$, thus the same Hodge-Deligne polynomials, and thus the same Hodge numbers.

\subsection*{Stable rationality and $\mathrm K_0(\Var_k)$}
The beautiful paper \cite{larsenlunts} revealed an unexpected connection with birational geometry: smooth projective varieties $X$ and $Y$ are {\em stably birational} if and only if 
\[
[X] \equiv [Y] \pmod{[\bA^1]}, 
\]
We have a surjective homomorphism
\begin{equation}
\label{eqn.toSBir}
\mathrm K_0(\Var_k) \to \bZ[\mathrm{SBir}_k]
\end{equation}
to the free abelian group on {\em stable birationality} classes of algebraic varieties over $k$, with kernel the ideal generated by $[\bA^1]$.
In particular, every motivic measure $\mu$ which is trivial on $\bA^1$ factors through the homomorphism \eqref{eqn.toSBir}.

The proof is based on 
\begin{itemize}
\item {\em Weak factorization}: every birational map can be factored into a sequence of blow-ups and blow-downs with smooth centers,
\item {\em Bittner's presentation}: the defining 
{\em Excision} relation in  $\mathrm K_0(\Var_k)$
can be replaced by the {\em Blow-up} relation: If $\widetilde{X}$ is the blow-up of a smooth projective variety $X$ in smooth $Z\subsetneq X$ and $E$ is the exceptional divisor then  
$$
[X] - [Z]= [\widetilde{X}]- [E].
$$
\end{itemize}

The paper \cite{KT} introduced the set 
$$
\Bir_{n,k}
$$
of isomorphism classes of function fields of algebraic varieties of dimension $n$ over $k$, i.e., the set of {\em birationality} classes of $n$-dimensional algebraic varieties. The free abelian group on $\Bir_{n,k}$ is
$$
\Burn_{n,k}:=\bZ[\Bir_{n,k}].
$$
The group
$$
\Burn_k:=\bigoplus_{n\ge 0} \Burn_{n,k}
$$
has a natural ring structure, induced by the product operation on algebraic varieties. There is a surjection 
$$
\Burn_k \to \mathrm{gr}(\mathrm K_0(\Var_k)),
$$
with nontrivial kernel 
\cite[Thm.\ 2.13]{borisov}.

\subsection*{Specialization of (stable) rationality}
Applications of {\em specialization} to rationality problems go back (at least) to \cite{Beau}, which established failure of rationality of certain Fano threefolds by degenerating them to conic bundles; for  these it is easier to compute the intermediate Jacobian, and if it fails to be the Jacobian of a curve, then the same holds for the generic fiber. 

The next conceptual leap was due to Voisin \cite{voisin} and Colliot-Th\'el\`ene--Pirutka \cite{CTP} who realized that specialization holds for cycle-theoretic (stable) birational invariants such as {\em integral decomposition of the diagonal}, respectively, {\em universal $\mathrm{CH}_0$-triviality}. Again, the computation of these invariants can be easier in the special fiber. 
Immediately, this led to a wealth of new results on failure of stable rationality, see, e.g., \cite{voisin}, \cite{CTP}, 
\cite{P-cyclic}, \cite{totaro-hyper}, \cite{HKTconic}, \cite{HT-Fano}, \cite{HPT-quadric}, 
\cite{HPT-quartic}, \cite{HPT-3quad}, \cite{KT-SRBS}, \cite{Schr-2}, \cite{ABP}, \cite{Schr-small-slopes}, \cite{NO-tropical}, 
as well as the surveys \cite{peyre} and \cite{CT-schrei}.

It turned out that both $\bZ[\mathrm{SBir}_k]$ and $\Burn_k$ admit specialization homomorphisms as well \cite{nicaiseshinder}, \cite{KT}. The key construction was the {\em motivic volume} homomorphism
$$
\mathrm K_0(\Var_K) \to 
\mathrm K_0(\Var_k),  \quad K=k((t)),
$$
respectively, the {\em Burnside volume} homomorphism
$$
\Burn_K\to \Burn_k. 
$$
A common refinement
$$
\mathrm K_0(\Var_K^{\dim}) \to 
\mathrm K_0(\Var_k^{\dim})
$$
of these two volume homomorphisms, with the
\emph{Grothendieck ring graded by dimension} $\mathrm K_0(\Var_k^{\dim})=\bigoplus_{d\ge 0}\mathrm K_0(\Var_k^{\le d})$,
was given in \cite{NO-refinement}.

\section{Equivariant birational geometry}
\label{sect:basic}
Given the developments outlined in Section \ref{sec.key}, it is natural to seek {\em equivariant} and other analogs of the main constructions. 
Here, we do this in the context of actions of a \emph{finite} group $G$; a version for varieties with logarithmic volume forms has been proposed in \cite{CKT}.

\subsection{Background}
\label{sect:basic1}
We follow the conventions of \cite[Sect.\ 2]{KT-vector}:
a $G$-variety is a reduced, separated, finite-type scheme over $k$, with regular action of $G$ that is transitive on the set of irreducible components.
The $G$-action is generically free if it is free on a nonempty invariant open subvariety.

Let $X$ and $Y$ be $G$-varieties.
We call a $G$-equivariant rational map $X\dashrightarrow Y$ a \emph{$G$-rational map}; if birational, then we call it \emph{$G$-birational}.
A $G$-rational map $X\dashrightarrow Y$ is \emph{proper} if there exist a $G$-variety $Z$ and proper $G$-equivariant morphisms $Z\to X$ and $Z\to Y$, with $Z\to X$ birational (cf.\ \cite[Sect.\ 2.12]{iitaka}, \cite[App.\ A]{hassetthyeon}).
Fundamental results in this setting are:
\begin{itemize}
\item \textbf{Equivariant desingularization},
which holds, e.g., by \cite{bierstonemilman}, and lets us replace any model by a smooth model.
\item \textbf{Equivariant weak factorization}
\cite{abramovichtemkin}: Any proper $G$-birational map $X\dashrightarrow Y$, where $X$ and $Y$ are smooth, can be factored as a composition finitely many $G$-birational maps given by blow-ups of invariant smooth $G$-subvarieties, and their inverses.
\item \textbf{Regular models}.
Any rational $G$-action is regular on some projective model, which by equivariant desingularization we may take to be nonsingular; see \cite{brionmodels}.
\end{itemize}

By convention, we work with regular $G$-actions.
We say that $X$ and $Y$ are \emph{$G$-birational}, written,
$$
X\sim_G Y
$$
when there exists a proper $G$-birational map $X\dashrightarrow Y$.
We say that two $G$-varieties $X$ and $Y$ are {\em stably $G$-birational} if 
$$
X\times \bP^n \sim_G Y\times \bP^m, \quad \text{some}\ n,m \in \bN,
$$
with trivial action on the second factors.
In particular, we do not assume that $X$ and $Y$ have the same dimension.

A projective $G$-variety $X$ is called {\em linearizable}, respectively {\em stably linearizable}, if there exists a $G$-representation $V$ such that 
$$
X\sim_G \bP(V), \quad \text{resp.} \quad X\times \bP^n \sim_G \bP(V)
$$
for some $n$.
If $X$ is only quasi-projective, then we apply the same terminology for the respective condition, applied to an equivariant projective model.
A $G$-action on $X$ is called {\em unirational} if there exists a $G$-equivariant dominant rational map $$
\bP(V)\dashrightarrow X.
$$

By convention, we let $G$ act on $X$ on the right, and we write $X\actsfromright G$.
Correspondingly, the $G$-action on $K=k(X)$ is on the left, $G\actsfromleft K$, where we adopt the further convention that $k(X)$ is the product of the function fields of the irreducible components of $X$.

\subsection{Cremona group}
\label{sect:cremona}

Of particular interest are varieties $X$ with {\em large} birational automorphism groups
$$
\Bir\Aut(X).
$$
There are currently no general methods to determine this group, given a variety $X$. Of course, this is easy for curves, but is a formidable problem in dimensions $\ge 2$.

There are important classes of varieties, where $\Bir\Aut(X)$ is accessible, in principle:
\begin{itemize}
\item varieties of {\em general type}, via canonical models
supplied by the Minimal Model Program \cite{HM}, 
\item K3 surfaces: every birational automorphism is a regular automorphism, and those are classified, see, e.g., \cite[Chapter 15]{Huy} or \cite{Brandhorst},
\item {\em birationally rigid varieties}, e.g., smooth quartic threefolds \cite{MI-lu}; see \cite{pukh}.
\end{itemize}

However, one of the most important examples,
$$
\Cr_n:=\Bir\Aut(\bP^n), 
$$
the {\em Cremona group} in dimension $n\ge 2$, remains mysterious.
Taking $k$ to be algebraically closed,
the classification of finite subgroups of 
$\mathrm{Cr}_2$ was a culmination of decades of efforts \cite{DI}. 
There is a classification of finite  nonabelian {\em simple} and {\em quasi-simple} subgroups of $\mathrm{Cr}_3$ \cite{ProSimple}, \cite{blancfinite}, and there are fascinating results about the overall structure of $\mathrm{Cr}_n$ \cite{pro-jordan}, \cite{blanc-quo}, but there are also many concrete unanswered questions, that fall under the general, overarching problems:
\begin{itemize}
\item When are two (necessarily linear) $G$-actions on $\bP^n$ conjugate in $\Cr_n$?
\item Is a given $G$-action on a rational variety (stably) linearizable?
\end{itemize}

\subsection{Equivariant birational rigidity}

The study of $G$-equivariant birational rigidity is a thriving area, with many beautiful constructions, see, e.g., \cite{Pro-ECM}, \cite{CS}.  
In principle, one can approach the problem of (equivariant) birationality via classification of all possible blow-ups and blow-downs. In practice, this is feasible only in small dimensions, and under the assumption that $G$ is large. 
It entails a detailed analysis of singularities, explicit invariant theory, geometric inequalities, etc. 
In dimension 2, the main technical tool is the theory of equivariant {\em Sarkisov links}; it allowed to classify finite subgroups of $\Cr_2$ in \cite{DI}. 
In dimension 3, it is the equivariant {\em Noether-Fano inequality}. Here is a sample application:   

\medskip
\noindent
{\em Example:}
The permutation $\fA_5$-action (and thus also the $\fS_5$-action) on the diagonal quadric 
$$
X=\Bigl\{ \sum_{j=1}^5 x_j^2=0 \Bigl\} \subset \bP^4
$$ 
is not linearizable \cite{CSZ}, \cite{PZ}. By \cite[Thm. 4.1]{CTZ-3}, the $\fS_5$-action is stably linearizable.

\subsection{Stacks}
\label{sect:stacks}
Let $X$ be a $G$-variety and $[X/G]$ the associated Deligne-Mumford stack.
When $X$ is a point (with trivial $G$-action), this is the classifying stack 
$$
BG=[\Spec(k)/G].
$$
A systematic introduction to the birational geometry of Deligne-Mumford stacks can be found in \cite{KT-stacks}.

While there is loss of information in the passage
$$
X\actsfromright G \quad \rightsquigarrow  \quad [X/G] \quad \rightsquigarrow  \quad  k(X)^G,
$$
there is still a rich source of obstructions to $G$-birationality to be explored in the setting of stacks; we will see such examples below in Section~\ref{sect:amitsur}. 

An equivariant sheaf (of abelian groups) on $X$ (for the $G$-action) determines a sheaf on $[X/G]$, by which we mean a sheaf on the \'etale site of $[X/G]$ \cite[Defn.\ 4.10]{DM}.
As on any site, there are cohomology groups of sheaves of abelian groups.
For $BG$, when the base field $k$ is algebraically closed, sheaf cohomology recovers group cohomology
\begin{equation}
\label{eqn.recovergroupcoho}
\rH^p(BG,\cF)=\rH^p(G,\cF(\Spec(k))),
\end{equation}
by the \v Cech spectral sequence for the covering of $BG$ by $\Spec(k)$.
Still supposing $k$ to be algebraically closed, the Leray spectral sequence,
applied to the morphism of stacks 
$$
[X/G] \to BG,
$$
yields
\begin{equation} 
\label{eqn:spectral}
\rH^p(G,\rH^q(X, \cF)) \Rightarrow \rH^{p+q}([X/G], \cF). 
\end{equation}

One can also consider an arithmetic version, where instead of a $G$-action we consider the action of the absolute Galois group $G_k=\Gal(\bar{k}/k)$. In this case, the spectral sequence takes the form
\begin{equation} 
\label{eqn:spectral-galois}
\rH^p(G_k,\rH^q(X_{\bar{k}}, \cF)) \Rightarrow \rH^{p+q}(X_k, \cF). 
\end{equation}
This spectral sequence has been studied in great detail; see, e.g., \cite[Sect.\ 1.5]{CTSansucDuke}.

\subsection{No-name lemma}
\label{sect:noname}

This surprisingly useful result in equivariant geometry is applicable to a $G$-vector bundle, whenever the $G$-action on the base is \emph{generically free}.
In one form, it asserts the existence of a $G$-birational map, over the base, from the total space of the vector bundle to a product with an affine space (with trivial $G$-action on the affine space); cf.\ \cite[Lemma 4.4]{CGR}.
Here we state a projective version.

\begin{lemm}[No-name lemma, projective bundle version]
\label{lemm:non-name}
Let $X$ be a $G$-variety, where
the $G$-action on $X$ is generically free.
Let $E\to X$ be a $G$-vector bundle of rank $n\ge 1$.
Then, letting $G$ act trivially on $\bP^{n-1}$ we have
$$
\bP(E) \sim_G X\times \bP^{n-1},
$$
compatibly with the projection maps to $X$.
\end{lemm}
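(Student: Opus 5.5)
The plan is to reduce to the generic point and then use Galois descent (Hilbert's Theorem 90) for the group $\GL_n$. Since the action on $X$ is generically free, there is a nonempty $G$-invariant open $U\subseteq X$ on which $G$ acts freely. Shrinking $U$ further, we may assume $U$ is quasi-projective, so that the quotient $U\to U/G$ exists and is a finite étale $G$-torsor. A $G$-birational map over $X$ may be constructed over any dense invariant open subset. It therefore suffices to show that the $G$-vector bundle $E|_U$ becomes $G$-equivariantly trivial, that is, isomorphic to $U\times \bA^n$ with trivial action on $\bA^n$, after restricting to some smaller dense invariant open subset.

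By étale descent along the torsor $U\to U/G$, $G$-vector bundles on $U$ correspond to vector bundles on $U/G$. Concretely, $E|_U\cong \pi^*F$ for a rank $n$ vector bundle $F$ on $U/G$, where $\pi\colon U\to U/G$ and $F=(\pi_*E|_U)^G$. Any vector bundle on the integral scheme underlying each component of $U/G$ is trivial on a dense open subset. Here the components of $U$ are permuted transitively, so $U/G$ is irreducible; take $W\subseteq U/G$ open with $F|_W\cong \cO_W^n$. Pulling back gives a $G$-equivariant isomorphism $E|_{\pi^{-1}(W)}\cong \pi^{-1}(W)\times\bA^n$ with trivial $G$-action on $\bA^n$.

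Equivalently, and this is the form I would write down to avoid any descent formalism, one works at the generic point. Let $K=k(X)$ and $L=K^G$. The generic fiber $E_K$ is an $n$-dimensional $K$-vector space with a semilinear $G$-action. Since $G$ acts faithfully on $K$ (by generic freeness, on each factor field together with transitive permutation of factors), Galois descent, i.e.\ $\rH^1(G,\GL_n(K))=1$ (Hilbert 90, in its version for Galois algebras), gives a $G$-invariant basis $s_1,\dots,s_n$ of $E_K$. These are rational sections of $E$. Spread them out to regular, $G$-invariant sections over a dense $G$-invariant open subset where they remain linearly independent; intersecting the finitely many translates of an open set handles the invariance. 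This trivializes $E$ equivariantly there.

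Finally, the equivariant trivialization $E|_V\cong V\times\bA^n$ induces $\bP(E|_V)\cong V\times\bP^{n-1}$, commuting with the projections to $V$. This is a $G$-birational map $\bP(E)\dashrightarrow X\times\bP^{n-1}$ over $X$. Its properness in the sense of Section~\ref{sect:basic1} follows by taking the closure of the graph in $\bP(E)\times_X(X\times\bP^{n-1})$, which is proper over both factors because both are proper over $X$. The main obstacle is the Hilbert 90 step in the case where $X$ is reducible, so that $K$ is a product of fields. I would handle it by reducing to the stabilizer $H$ of one component $X_1$: $E_K$ is then induced from $E_{k(X_1)}$, $H$ acts faithfully on $k(X_1)$, and classical Hilbert 90 for $H$ applies.
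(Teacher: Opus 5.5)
Your proposal is correct and follows essentially the same route as the paper: pass to a dense invariant open where $G$ acts freely (the paper takes it affine, which makes the quotient immediate), use descent along the $G$-torsor $U\to U/G$ to write $E|_U$ as the pullback of a vector bundle on $U/G$, trivialize that bundle on a dense open, and pull back. Your generic-point Hilbert 90 variant and your graph-closure argument for properness are valid; they fill in details the paper leaves implicit.
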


\begin{proof}
By assumption,
the $G$-action is free on some dense invariant open subscheme $W\subset X$.
If $V\subset W$ is an affine dense open subscheme, then the intersection
\[
U=\bigcap_{g\in G} V\cdot g
\]
of the translates of $V$ is an affine dense invariant open subscheme of $X$.
Writing $U=\Spec(R)$, we have quotient $Z=\Spec(R^G)$ such that
$U\to Z$ has a structure of a $G$-torsor. Then, by \cite[Prop.\ 0.9, Ampfl.\ 1.3]{mumford-git} and standard faithfully flat descent, we may identify $G$-vector bundles on $U$
with vector bundles on $Z$.
In particular, when we shrink $U$ to a suitable invariant dense affine open and correspondingly shrink $Z$, we may suppose that the restriction of $E$ to $U$ is $G$-equivariantly trivial.
With this we obtain a proper $G$-birational map $\bP(E)\dashrightarrow X\times \bP^{n-1}$.
\end{proof}

As an immediate consequence, we have

\begin{coro}
\label{coro:no-name}
Let $G\to \GL(V^\vee)$ and $G\to \GL(W^\vee)$ be representations, that determine faithful
projective representations $G\to \PGL(V^\vee)$ and $G\to \PGL(W^\vee)$.
Then $\bP(V)$ and $\bP(W)$ are stably $G$-birational.
\end{coro}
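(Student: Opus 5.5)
The plan is the standard ``double fibration'' argument: apply Lemma~\ref{lemm:non-name} twice to the product $\bP(V)\times\bP(W)$, once viewing it as a projective bundle over $\bP(V)$ and once as a projective bundle over $\bP(W)$. Set $n=\dim V$ and $m=\dim W$.

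The first step is to make the two bundle structures precise. The projection $\bP(V)\times \bP(W)\to \bP(V)$ is the projectivization of the trivial bundle $\bP(V)\times W\to \bP(V)$. We must check that this is a $G$-vector bundle. The issue is that $G$ acts on $W$ only through the chosen linear lift $G\to\GL(W^\vee)$. We do have such a linear representation, so $\bP(V)\times W$ with the diagonal action is a genuine $G$-equivariant vector bundle of rank $m$, and its projectivization is $\bP(V)\times\bP(W)$ with the diagonal action. Symmetrically, $\bP(V)\times\bP(W)\to\bP(W)$ is the projectivization of the $G$-vector bundle $V\times \bP(W)$ of rank $n$. (If the paper's convention for $\bP$ uses dual spaces, one replaces $W$ by $W^\vee$ here; this does not affect the argument.)

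The second step is to verify the hypothesis of Lemma~\ref{lemm:non-name}: the $G$-action on the base must be generically free. This is where faithfulness of the projective representations enters. Since $G\to\PGL(V^\vee)$ is injective, each nontrivial $g\in G$ acts on $\bP(V)$ as a nontrivial projective linear transformation. Its fixed locus is therefore a union of projectivized eigenspaces, which is a proper closed subset. The union of these fixed loci over the finitely many $g\neq 1$ is a proper closed invariant subset, and $G$ acts freely on its complement. The same holds for $\bP(W)$.

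The third step is to apply Lemma~\ref{lemm:non-name} to both bundle structures, which gives
\[
\bP(V)\times\bP^{m-1}\sim_G \bP(V)\times\bP(W)\sim_G \bP(W)\times \bP^{n-1}.
\]
Composing the two proper $G$-birational maps yields a proper $G$-birational map. This is exactly the definition of $\bP(V)$ and $\bP(W)$ being stably $G$-birational.

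I expect no real obstacle. The only points requiring care are the generic freeness argument, which is where faithfulness of the projective representations is used, and the existence of a linear lift so that the trivial bundle is truly $G$-linearized; the latter is given as part of the hypotheses.
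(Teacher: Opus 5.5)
Your proposal is correct and is exactly the argument the paper has in mind when it calls the corollary an immediate consequence of Lemma~\ref{lemm:non-name}. You view $\bP(V)\times\bP(W)$ as a projectivized $G$-vector bundle over each factor, use faithfulness of the projective representations to get generic freeness on the bases, and compose the two resulting proper $G$-birational maps to obtain $\bP(V)\times\bP^{m-1}\sim_G\bP(W)\times\bP^{n-1}$.
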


Another useful observation concerns the canonical compactification $\bP(E\oplus 1)$ of a $G$-vector bundle $E\to X$.
The $G$-rational map $E\dashrightarrow \bP(E)$ becomes a morphism after
blowing up the zero-section $0_E$.
This extends to the compactification, and we have
an equivariant isomorphism
\[ B\ell_{0_E}\bP(E\oplus 1)\cong \bP(\cO_{\bP(E)}(-1)\oplus 1) \]
over $\bP(E)$.

\begin{coro}
\label{coro:no-name2}
Let $X$ be a $G$-variety and $E\to X$ a $G$-vector bundle of rank $\ge 1$.
If the $G$-action on $\bP(E)$ is generically free, then
\[ \bP(E\oplus 1)\sim_G \bP(E)\times \bP^1, \]
where $G$ acts trivially on $\bP^1$.
\end{coro}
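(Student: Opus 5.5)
The plan is to use the equivariant isomorphism stated just before the corollary,
\[ B\ell_{0_E}\bP(E\oplus 1)\cong \bP(\cO_{\bP(E)}(-1)\oplus 1), \]
over $\bP(E)$, and then apply Lemma~\ref{lemm:non-name} to the right-hand side, viewed as the projectivization of a rank-$2$ $G$-vector bundle on the $G$-variety $\bP(E)$.

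First, I would check that the blow-up map $B\ell_{0_E}\bP(E\oplus 1)\to \bP(E\oplus 1)$ is a proper $G$-birational map. The zero section $0_E\subset E\subset \bP(E\oplus 1)$ is a $G$-invariant smooth-over-$X$ closed subvariety of codimension equal to the rank of $E$, which is at least $1$. It is nowhere dense, so blowing it up is a proper $G$-equivariant birational morphism. Hence $\bP(E\oplus 1)\sim_G B\ell_{0_E}\bP(E\oplus 1)$, with the blow-up itself serving as the $Z$ in the definition of a proper $G$-birational map. If the rank is $1$, the center is a divisor and the blow-up is an isomorphism, which is harmless. By the displayed isomorphism we then get $\bP(E\oplus 1)\sim_G \bP(\cO_{\bP(E)}(-1)\oplus 1)$.

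Next, I would apply Lemma~\ref{lemm:non-name} with the following choices:
\begin{itemize}
\item base $\bP(E)$, on which the $G$-action is generically free by hypothesis;
\item $G$-vector bundle $\cO_{\bP(E)}(-1)\oplus 1$ of rank $n=2$.
\end{itemize}
Here $\cO_{\bP(E)}(-1)$ carries its natural $G$-linearization as the tautological subbundle of the pullback of $E$, and $1$ denotes the trivial line bundle with trivial linearization. The lemma gives $\bP(\cO_{\bP(E)}(-1)\oplus 1)\sim_G \bP(E)\times\bP^1$ with trivial action on $\bP^1$. Since $\sim_G$ is defined by proper $G$-birational maps, I would compose the two proper $G$-birational maps (taking a common resolution, e.g.\ a $G$-equivariant closure of the graph) to conclude $\bP(E\oplus1)\sim_G\bP(E)\times\bP^1$.

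The only point needing care is this last transitivity of $\sim_G$ for proper $G$-birational maps, and the mild issue that $\bP(E)$ is the base rather than $X$. Generic freeness is required on $\bP(E)$, which is exactly the hypothesis; generic freeness on $X$ is not needed. I expect no real obstacle. The identification of the blow-up is already given, so the argument reduces to these formal steps.
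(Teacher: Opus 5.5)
Your proposal is correct and matches the argument the paper intends. The corollary is stated right after the equivariant identification $B\ell_{0_E}\bP(E\oplus 1)\cong \bP(\cO_{\bP(E)}(-1)\oplus 1)$ over $\bP(E)$, and the remaining step is to apply Lemma~\ref{lemm:non-name} to the rank-$2$ bundle $\cO_{\bP(E)}(-1)\oplus 1$ on $\bP(E)$, where generic freeness is exactly the hypothesis. Your composition of the blow-up with the proper $G$-birational map from the lemma, and your note on the rank-$1$ case, spell out formal steps the paper leaves implicit.
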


\section{Obstructions in $G$-equivariant geometry}
\label{sect:obstr}
Here, we assume that $k$ (the base field, of characteristic zero) is algebraically closed.
A regular action of a group $G$ on a smooth projective $n$-dimensional variety $X$ naturally induces an action on its points $X(k)$ and on various geometric invariants associated with $X$. Among those geometric invariants are:
\begin{itemize}
\item spaces of differential forms,
\item Chow and Hilbert schemes,
\item Picard and Brauer groups,
\item \'etale cohomology and Chow groups,
\item Intermediate Jacobians, etc. 
\end{itemize}
Frequently, the information extracted from these data can be turned into a birational invariant of the $G$-action.
In this section, we recall some of these
extensively studied invariants and resulting obstructions to (stable) linearizability, and introduce new invariants.

\subsection{Dimension}
Let $X$ be a smooth projective rational variety with a generically free action of a finite group $G$. 
The simplest obstruction to linearizability 
of the action is the absence of faithful $G$-representations of dimension $\dim(X)+1$. 

\medskip
\noindent
{\em Examples:} Quadrics with actions of large groups. E.g., the smallest faithful representation of the symmetric group $\fS_n$ has dimension $n-1$. It follows that the $(n-3)$-dimensional quadric $X$, defined in $\bP^{n-1}$ by
$$
\sum_{j=1}^n x_j^2 = \sum_{j=1}^n x_j = 0  
$$
with the obvious permutation action of $\fS_n$, is not linearizable.
For $n=6$, the action is stably linearizable, by \cite[Thm.\ 4.1]{CTZ-3}.

Another example is supplied by the Mathieu group $G=\mathfrak M_{11}$. Its smallest faithful representations have dimension $10$. One of these is a self-dual representation $V$ admitting a nontrivial invariant quadratic form, thus a
nonlinearizable action on an $8$-dimensional quadric $X\subset \bP(V)$. 
By \cite[Prop.\ 5.3]{HT-quad}, stable linearizability of the $G$-action on $X$ is implied by stable linearizability of the action of the $2$-Sylow subgroup $G_2\subset G$, which in this case is 
the semi-dihedral group of order $16$. The restriction of $V$ to $G_2$ admits a decomposition $W\oplus 1\oplus 1$, from which we may conclude $X\sim_{G_2}\bP(W\oplus 1)$.
Thus the $G$-action on $X$ is stably linearizable.

Yet another example is the Pfaffian, and thus rational, cubic fourfold $X\subset \bP^5$ considered in \cite[Rmk. 15]{BBT}:
$$
\sum_{j=1}^6 x_j^3 = \sum_{j=1}^6 x_3 = 0,  
$$
with the action of the Frobienius group $\mathrm{AGL}_1(\mathbb F_7)$ of order $42$. Its smallest faithful representation has dimension $6$, and the group cannot act generically freely on $\bP^4$. 
As shown in \cite[Thm.\ 16]{BBT}, $X\times \bP^1$ is linearizable.

\subsection{Fixed points}
\label{sect:fix}

Actions of cyclic groups on smooth projective rational varieties always have fixed points. However, this may fail for noncyclic groups (or for actions of cyclic groups on nonrational varieties). 

\medskip
\noindent
\emph{Examples:} The faithful action of the Klein four-group $\mathfrak K_4=C_2^2$ on $\bP^1$ has no fixed points.
Also, there are no fixed points for the action of the unique $C_2\oplus C_4\subset \Cr_2$ not fixing a curve of genus $\ge 1$ and not birational to an action on $\bP^2$ or $\bP^1\times \bP^1$, see \cite[Thm.\ 5]{blanc-thesis}.  

\medskip

Clearly, 
$$
X^G\neq \emptyset \quad \Leftrightarrow\quad (X\times \bP^n)^G\neq \emptyset,
$$
with $G$ acting trivially on the second factor.

\begin{prop}[{\cite{RYessential}}]
\label{prop:point}
Let $A$ be an abelian group and 
let $X$ and $Y$ be smooth projective $A$-birational varieties. 
Then 
$$
X^A\neq \emptyset \quad\Leftrightarrow\quad Y^A\neq \emptyset.
$$
\end{prop}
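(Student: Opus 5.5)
By equivariant weak factorization (Section~\ref{sect:basic1}), a proper $A$-birational map between smooth projective $X$ and $Y$ factors into blow-ups and blow-downs along smooth $A$-invariant centers. It therefore suffices to treat a single blow-up $\pi\colon \widetilde X=B\ell_Z X\to X$, with $Z\subset X$ smooth and $A$-invariant, and to show
$$
\widetilde X^A\neq\emptyset \quad\Leftrightarrow\quad X^A\neq\emptyset .
$$
Along the way the intermediate varieties remain smooth, and projectivity is not needed for the fixed-point argument.

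One direction is immediate: $\pi$ is $A$-equivariant, so a fixed point $\tilde x\in\widetilde X^A$ maps to a fixed point $\pi(\tilde x)\in X^A$.

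For the converse, let $x\in X^A$. If $x\notin Z$, then $\pi$ is an isomorphism over $x$, and $\pi^{-1}(x)$ is a fixed point. If $x\in Z$, then $\pi^{-1}(x)=\bP(N_x)$, where $N_x=T_xX/T_xZ$ is the normal space. The group $A$ fixes $x$, so it acts linearly on $N_x$, and the induced action on $\bP(N_x)$ is the one inherited from $\widetilde X$. Since $k$ is algebraically closed of characteristic zero and $A$ is a finite \emph{abelian} group, the representation $N_x$ decomposes into a direct sum of characters. In particular there is a common eigenvector $v\neq 0$, and the line $[v]\in\bP(N_x)\subset\widetilde X$ is $A$-fixed.

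Chaining these two directions through the steps of the factorization gives the proposition. The abelian hypothesis is exactly what produces the fixed line in the exceptional fiber; for nonabelian $A$, an irreducible representation of dimension $\ge 2$ may have no invariant line. This is the step where the argument could fail in general, though here it is routine. The only other point to check is that weak factorization applies in the needed form: for smooth projective $X$ and $Y$ the map is automatically proper $A$-birational, so \cite{abramovichtemkin} applies as stated.
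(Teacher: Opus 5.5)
Your proof is correct. The paper gives no argument of its own beyond citing \cite{RYessential}. The Example right after the statement isolates exactly your key step: over an $A$-fixed point, the exceptional fiber is a projectivized $A$-representation, so it has an $A$-fixed point because $A$ is abelian. Combining this with equivariant weak factorization, as you do, settles the claim. The points you leave implicit are fine: the component of the center through a fixed point $x$ is $A$-invariant, so $N_x$ is an $A$-representation, and properness is automatic for projective $X,Y$.

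The result in the cited source is usually proved by a different and stronger ``going-down'' argument that avoids weak factorization. It states: if $X$ has a smooth $A$-fixed point $x$ and $X\dashrightarrow Y$ is \emph{any} $A$-equivariant rational map to a proper $A$-variety $Y$, then $Y^A\ne\emptyset$. One inducts on $\dim X$:
\begin{itemize}
\item blow up $x$;
\item since $Y$ is proper, the map is defined at the generic point of the exceptional divisor $E\cong\bP^{n-1}$, so it restricts to an $A$-equivariant rational map $E\dashrightarrow Y$;
\item $E$ again has a smooth $A$-fixed point, so induction applies.
\end{itemize}
This needs neither birationality nor smoothness of $Y$. It also yields the paper's later assertion that Condition \textbf{(A)} holds for unirational actions, by applying it to $\bP(V)\dashrightarrow X$; your factorization argument does not reach that. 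Your route uses heavier machinery, but it is entirely adequate for the birational statement as posed.
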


\begin{exam}
Consider a blow-up of a fixed point $x\in X^A$ on a smooth projective $A$-variety. The exceptional divisor $E=\bP(\cT_{X,x})$ is the projectivization of the tangent bundle $\cT_{X,x}$, a representation of $A$. The projectivization necessarily has fixed points.  

However, for {\em nonabelian} $G$,
the existence of fixed points is {\em not} a $G$-birational invariant of smooth projective $G$-varieties.
Consider $G=\fS_3$, acting on $\bP^2$, canonical compactification of the standard $2$-dimensional representation.
When we blow up the origin, the unique fixed point, we obtain
exceptional divisor $\bP^1$, which has has no fixed points. 
\end{exam}

\medskip
\noindent
{\bf (A)}: Existence of fixed points upon restriction to abelian subgroups 
$$
A\subseteq G.
$$

Condition {\bf (A)} is a $G$-equivariant stable birational invariant of smooth projective varieties.
It holds when the $G$-action is stably linearizable, and more generally, when the $G$-action is unirational.

\subsection{Determinant}
\label{sect:det}
For the action of a finite abelian group $A$, we get a finer invariant by considering the {\em weights} of the $A$-action in the tangent space $\cT_{X,x}$ to a fixed point $x\in X^A$. Let 
$$
\beta(x):=[b_1,\ldots,b_n], \quad b_j\in A^\vee :=\Hom(A,k^\times), 
$$
be the collection of these weights, i.e., characters of $A$. Let
$$
\det(\beta(x)) \in \wedge^n(A^\vee)
$$
be the {\em determinant},
defined as the wedge product $b_1\wedge\dots\wedge b_n$ and well-defined up to sign.

\begin{prop}[{\cite{RYinvariant}}]
\label{prop:ry}
Let 
$$
\pi\colon \widetilde{X}\to X
$$ 
be an equivariant birational morphism of smooth projective $A$-varieties. 
Then for all $x\in X^A$ and
$\tilde x\in \widetilde{X}^A$ with $\pi(\tilde x)=x$ we have
$$
\det(\beta(\tilde{x})) = \pm \det(\beta(x))\in \wedge^n(A^\vee).
$$
\end{prop}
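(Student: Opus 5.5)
The proof reduces to the case of a single blow-up, where the determinant can be computed directly from the weights of the exceptional divisor. First, by equivariant weak factorization (or more directly, since we only need to handle birational morphisms), I would like to factor $\pi$ into blow-ups along smooth invariant centers. A general birational morphism need not be such a composition. So I would instead prove the stronger statement that $\det(\beta(x))$, up to sign, is constant on $A$-fixed points in a fixed connected component sense. More precisely, I will show it is an invariant of $A$-birational models compatible with the map.

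I propose the following reduction. Using equivariant weak factorization for the proper $A$-birational map $\pi$, we get a chain $\widetilde X = X_0 \dashrightarrow X_1 \dashrightarrow \dots \dashrightarrow X_m = X$. In this chain each step is a blow-up or blow-down along a smooth invariant center. By the strong (functorial) form of \cite{abramovichtemkin}, the chain can be taken compatible with the morphisms to $X$: there is a morphism $X_i\to X$ for each $i$, and the steps are isomorphisms away from the preimages of the indeterminacy. Then it suffices to treat one blow-up $X'=B\ell_Z Y\to Y$ with $Z$ smooth and $A$-invariant. In that case we show that every $y'\in X'^A$ over $y\in Y^A$ has the same determinant up to sign. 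Fixed points over $y$ then propagate along the chain: a fixed point over $x$ on one side lies over a fixed point over $x$ on the next, using that fibers over $x$ are projective $A$-varieties. By Proposition \ref{prop:point}, applied to these fibers or to the exceptional projective bundles, such fibers have fixed points.

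For the single blow-up, the local computation goes as follows. If $y\notin Z$ there is nothing to do. If $y\in Z$, linearize at $y$: the tangent space splits as $\cT_{Y,y}=\cT_{Z,y}\oplus N$. Here $\cT_{Z,y}$ has weights $a_1,\dots,a_r$ and $N$ has weights $c_1,\dots,c_s$. A fixed point $y'$ over $y$ is an $A$-fixed line in $\bP(N)$, i.e.\ an eigenline with weight $c_1$ say. The tangent weights at $y'$ are then:
\begin{itemize}
\item $a_1,\dots,a_r$ from $Z$;
\item $c_1$ from the normal direction of the exceptional divisor (the tautological line);
\item $c_j-c_1$ for $j\ge 2$ from $\cT_{\bP(N)}=\Hom(\ell,N/\ell)$.
\end{itemize}
Taking the wedge, $a_1\wedge\dots\wedge a_r\wedge c_1\wedge(c_2-c_1)\wedge\dots\wedge(c_s-c_1)$ equals $a_1\wedge\dots\wedge c_1\wedge c_2\wedge\dots\wedge c_s$ by multilinearity and alternation. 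The signs are fixed by the writing conventions (additive notation in $A^\vee$), hence $\pm$.

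The main obstacle is making the reduction from an arbitrary birational morphism to blow-ups rigorous. In particular, the step requiring care is transporting fixed points through blow-downs in the chain while keeping control of the point lying over $x$. If the functorial version of weak factorization is awkward, the fallback is a valuative/arc argument in the spirit of \cite{RYinvariant}. There one computes $\det$ via the $A$-action on $\wedge^n\cT$: the determinant character of the tangent representation gives the action on the fiber $\omega_{X,x}^{-1}$. One would then compare $\omega_{\widetilde X}=\pi^*\omega_X(R)$ near $\tilde x$, where the ramification divisor $R$ contributes weights that cancel modulo the relation $b\wedge b=0$. The hard part there is that a character-valued $\det$ only sees the sum $\sum b_j$, not the wedge. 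So I would commit to the blow-up route.
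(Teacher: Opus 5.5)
The survey does not prove this statement; it only cites Reichstein--Youssin, so there is no internal argument to compare against. Your blow-up route does give a complete proof. It is the same strategy the survey uses for its other invariants (Theorems~\ref{thm:new}, \ref{thm:amit4}, \ref{thm.biratinvarclass}): equivariant weak factorization, then a local computation for a single blow-up. The cost is reliance on the weak factorization theorem. In exchange you get a short reduction to one blow-up, and slightly more: the set of determinants of fixed points over $x$ is the same on every model in the factorization. Your local computation is correct: the weights $a_1,\dots,a_r,\,c_1,\,c_2-c_1,\dots,c_s-c_1$ at an eigenline $\ell\subset N_y$, together with the wedge identity. The $\pm$ comes from $\beta$ being an unordered multiset, not from the additive notation.

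Two points should be made precise.

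First, the morphisms $\pi_i\colon X_i\to X$ do not come from functoriality. They come from the projectivity clause of weak factorization: there is an index $i_0$ such that $X_i\to\widetilde X$ is a morphism for $i\le i_0$ (compose with $\pi$) and $X_i\to X$ is a morphism for $i\ge i_0$. Each step $X_{i+1}\to X_i$ (or its inverse) is then automatically compatible with the $\pi_i$. The reason is that two morphisms from a reduced scheme to the separated $X$ that agree generically must agree.

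Second, replace the vague ``propagation'' with explicit bookkeeping. Let $D_i\subseteq\wedge^n(A^\vee)/\pm$ be the set of classes $\det\beta(y)$ for $y\in X_i^A$ with $\pi_i(y)=x$. For a step $B\ell_Z Y\to Y$ (in either direction of the chain), your computation shows that fixed points upstairs over $x$ map to fixed points downstairs over $x$ with the same determinant. Conversely, every $y\in Y^A$ has an $A$-fixed point in its fiber. That fiber is either a point or $\bP(N_y)$, and $\bP(N_y)$ contains an eigenline because the abelian group $A$ acts diagonalizably on $N_y$ ($k$ algebraically closed of characteristic $0$). Proposition~\ref{prop:point} is neither needed nor applicable to fibers. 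Hence $D_i=D_{i+1}$ for all $i$. Since $D_m=\{\pm\det\beta(x)\}$ and $\det\beta(\tilde x)\in D_0$, the claim follows for every $\tilde x$ over $x$.
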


\medskip
\noindent
{\em Example:} The action of the cyclic group $C_5$ on $\bP^1$, canonical compactification of a nontrivial one-dimensional representation, falls into one $C_5$-birational equivalence class for weight $\pm 1$ and another $C_5$-birational equivalence class for weight $\pm 2$ (and we have the same description for $C_5$-isomorphism classes, because of dimension $1$).
Using Proposition \ref{prop:ry} we see that for any $n\ge 1$ and odd prime $p$ the canonical compactifications of $n$-dimensional faithful linear representations of $C_p^n$ fall into precisely $(p-1)/2$ equivariant birational equivalence classes.

\medskip
\noindent
{\bf (Det)}: Existence of $A$-fixed points, for some abelian subgroup $A\subseteq G$, with a given determinant class
$$
[\det(\beta)] \in \wedge^n(A^\vee) / \pm
$$
is a $G$-equivariant birational invariant for smooth projective $G$-varieties. 

\begin{rema}
\label{rem.Detlimitation}
The invariant {\bf (Det)} can only deliver finer information than the mere existence of $A$-fixed points in the case of $G$-varieties of dimension equal to the rank of $A$.
\end{rema}

\subsection{Cohomology}
\label{sect:coho}

One can investigate the $G$-action on the Picard group $\Pic(X)$.
A key observation is that a blow-up of a smooth $G$-stable subvariety adds a permutation module to $\Pic(X)$, and that the multiplication with projective space with trivial $G$-action adds $\Z$ to $\Pic(X)$, a trivial permutation module. It follows that $\Pic(X)$, modulo permutation modules is a stable birational invariant. This yields an invariant and obstruction to stable linearizability:

Let $X$ be a smooth projective rational variety with a generically free $G$-action. 
The similarity class 
$$
[\Pic(X)]
$$
in the set of isomorphism classes of $G$-lattices, modulo the equivalence relation given by direct sums with permutation lattices,
is a stable birational invariant.

\medskip
\noindent
{\bf (SP)}: The property that the $G$-module $\Pic(X)$ is stably permutation:
$$
[\Pic(X)]=[0].
$$

Condition {\bf (SP)} holds for stably linearizable actions.

There exist stably permutation modules that are not permutation modules; some examples are recalled in \cite[Sect.\ 1]{hoshiyamasaki}.
We are not aware of algorithms to check whether a given $G$-module is stably permutation. 
However, the next, related, invariant can be computed effectively.

\medskip
The first group cohomology
$$
\rH^1(G, \Pic(X))
$$
is a stable birational invariant.

\medskip
\noindent
{\bf (H1)}: The vanishing of $\rH^1(H, \Pic(X))$, for all subgroups $H\subseteq G$.

\medskip
If the $G$-action is stably linearizable then Condition {\bf (H1)} holds.
In some cases, the invariant $\rH^1(G,\Pic(X))$ can be determined from the stabilizer stratification of $X$ \cite{BogPro}, \cite{KT-dp}.

\medskip
\noindent
{\em Example:} Let $G=C_p$ act on a smooth projective rational surface $X$. Assume that $X^G$ contains a curve of genus $g \ge 1$; such a curve is necessarily smooth and unique. Then by \cite{BogPro}, 
$$
\rH^1(G,\Pic(X)) \cong (\bZ/p\bZ)^{2g}.
$$

\subsection{Coniveau filtrations}
\label{sect:coniveau}
Let $X$ be a smooth projective $G$-variety of dimension $n$ and $A$ a $G$-module.
As in \cite[Sect.\ 3]{KT-uni} there is the cohomology
$$
\rH^i_G(X,A):=\rH^i([X/G],A).
$$
Let us suppose that $A$ is $m$-torsion, for a positive integer $m$, and consider the subgroups:
\begin{itemize}
\item 
$\rN^1\rH^i_G(X,A)$, the group of all $\alpha \in \rH^i_G(X,A)$, such that $\alpha$ vanishes in $\rH^i_G(U,A)$ for some nonempty $G$-invariant open $U\subset X$, 
\item 
$\widetilde{\rN}^1\rH^i_G(X,A)$, consisting of classes induced via equivariant morphisms $Y\to X$, with $Y$ a smooth projective $G$-variety of dimension $n-1$.
\end{itemize}
In the definition of $\widetilde{\rN}^1\rH^i_G(X,A)$,
classes are induced via the Gysin map
$$
\rH^{i-2}([Y/G],A\otimes \mu_m^{\otimes -1})\to \rH^i([X/G],A).
$$
We have
\begin{equation} 
\label{eqn:nc}
\widetilde{\rN}^1\rH^i_G(X,A) \subseteq \rN^c\rH^i_G(X, A);
\end{equation}
see \cite{BO} for background.
Exactly as in \cite[Prop.\ 2.4]{BO} we obtain:

\begin{prop}
\label{prop:n1}
The quotient
\begin{equation}
    \label{eqn:nonv}
\rN^1\rH^i_G(X,A)/\widetilde{\rN}^1\rH^i_G(X,A) 
\end{equation}
is a stable $G$-birational invariant of $X$. 
\end{prop}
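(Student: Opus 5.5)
By equivariant weak factorization, any two $G$-birational smooth projective models are connected by a chain of blow-ups of smooth invariant $G$-subvarieties and their inverses. Stable $G$-birationality additionally allows products with $\bP^m$ carrying the trivial action. So the plan is to show that the quotient \eqref{eqn:nonv} is unchanged in two basic cases: (i) $\pi\colon \widetilde X\to X$ is the blow-up of $X$ along a smooth $G$-invariant center $Z$ of codimension $c\ge 2$, and (ii) $p\colon X\times \bP^m\to X$ is the projection. In each case I will construct a natural map between the quotients and show it is an isomorphism. This follows \cite[Prop.\ 2.4]{BO}, with all cohomology taken on the stacks $[X/G]$.

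\emph{Case (i).} Pullback $\pi^*\colon \rH^i([X/G],A)\to \rH^i([\widetilde X/G],A)$ sends $\rN^1$ to $\rN^1$, since $\pi$ is an isomorphism over a dense invariant open set. It also sends $\widetilde\rN^1$ to $\widetilde\rN^1$: given $f\colon Y\to X$ with $Y$ of dimension $n-1$, replace $Y$ by an equivariant resolution $Y'$ of the closure of the graph of $Y\dashrightarrow\widetilde X$. I then need $f_*=(\pi\circ f')_*$ compatibly with $\pi^*$, i.e.\ $\pi^*f_*\alpha - f'_*(\cdot)$ must be supported on the exceptional divisor $E$. Classes pushed forward from $E$, which is smooth projective of dimension $n-1$, already lie in $\widetilde\rN^1$. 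For the inverse, use $\pi_*$. Since $\pi_*\pi^*=\mathrm{id}$, the map $\pi^*$ is injective on the quotient. For surjectivity I use the blow-up formula on stacks,
$$\rH^i([\widetilde X/G],A)\cong \pi^*\rH^i([X/G],A)\oplus \bigoplus_{j=1}^{c-1} j_*\bigl(h^{j-1}\cdot\rH^{i-2j}([Z/G],A(-j))\bigr),$$
which holds on $[X/G]$ because the projective bundle formula and the Gysin sequences are étale local and descend along $X\to[X/G]$. The extra summands are Gysin images from $E$, hence lie in $\widetilde\rN^1$. So the class of any $\tilde\alpha\in\rN^1$ modulo $\widetilde\rN^1$ comes from $\pi^*\pi_*\tilde\alpha$, and one checks that $\pi_*\tilde\alpha\in\rN^1$ because $\pi_*$ commutes with restriction to open sets over which $\pi$ is an isomorphism.

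\emph{Case (ii).} This is analogous. Use the projective bundle formula $\rH^i([X\times\bP^m/G],A)=\bigoplus_{j} h^j p^*\rH^{i-2j}([X/G],A(-j))$. The summands with $j\ge 1$ are Gysin images from $X\times\bP^{m-1}$, and the $j=0$ summand is handled by pullback with left inverse given by restriction to a section $X\times\{pt\}$, which is $G$-invariant because $G$ acts trivially on $\bP^m$. Note that the dimension changes, so $\widetilde\rN^1$ refers to subvarieties of dimension $n+m-1$. The check that $\rN^1$ corresponds on both sides uses that restriction to $U\times\bP^m$ is compatible with this decomposition.

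The main obstacle is the functoriality step in case (i): showing $\pi^*\widetilde\rN^1\subseteq\widetilde\rN^1$ and $\pi_*\widetilde\rN^1\subseteq\widetilde\rN^1$ with the correct twists. This requires a base-change or excess-intersection formula for Gysin maps on DM quotient stacks. I would handle it by first establishing these formulas $G$-equivariantly on the varieties, where they are standard, and then descending via the Hochschild--Serre spectral sequence \eqref{eqn:spectral}.
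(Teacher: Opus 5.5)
Your proposal follows the argument the paper intends: the paper simply invokes \cite[Prop.\ 2.4]{BO}, i.e.\ reduction by equivariant weak factorization to a blow-up along a smooth invariant center and to $X\times\bP^m$, where the blow-up and projective bundle formulas on $[X/G]$ show that the extra summands are Gysin images from $E$, resp.\ $X\times\bP^{m-1}$, and hence lie in $\widetilde\rN^1$. Two details need tightening. First, your ``main obstacle'' needs neither excess intersection nor descent through Hochschild--Serre (a spectral sequence only controls graded pieces, so it cannot transfer identities of maps); localization and purity on $[\widetilde X/G]$ suffice, since a class vanishing on $[(\widetilde X\setminus E)/G]$ is a Gysin image from the smooth divisor $E$, and this also covers the case $f(Y)\subseteq Z$, where your graph lift does not exist. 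Second, in case (ii) an $\rN^1$-class need not vanish on an open of the form $U\times\bP^m$, so you should restrict to $X\times\{pt\}$ with $pt$ chosen depending on the classes involved: it should meet the open where the class vanishes and, by generic smoothness, be transverse to the given $W\to X\times\bP^m$ so that $s^*$ preserves $\widetilde\rN^1$; this is legitimate because $s^*$ is independent of $pt$ by the projective bundle formula.
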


This invariant has been studied in the non-equivariant setting: the quotient \eqref{eqn:nonv} (with $G=1$) can be nontrivial, e.g., with $i=3$, $A=\bZ/2\bZ$ \cite{kameko}.

\begin{rema}
\label{rem.anotherapproach}
Benoist and Ottem \cite[Thm. 4.3]{BO} consider a variant of this construction, where for particular $G$ and $A$ there are smooth projective varieties $X$ which approximate in a certain sense the cohomology of $BG$,
for which the quotient \eqref{eqn:nonv} is nontrivial.
\end{rema}

\subsection{Intermediate Jacobians}
\label{sect:jac}
An important notion in birational geometry of threefolds over closed and nonclosed ground fields is the {\em intermediate Jacobian}. It allowed to show that a smooth cubic threefold over $\bC$ is irrational \cite{CG}, showed irrationality of many conic bundles \cite{Beau}, gave rise to examples of irrational but stably rational threefolds \cite{BCSS}. Recent developments in \cite{BW1}, \cite{HT-cycle}, \cite{HT-rat}, \cite{BW2}, \cite{KuP1}, \cite{KuP2} concerning {\em torsors} under intermediate Jacobians
led to rationality criteria for certain geometrically rational threefolds over nonclosed fields. Equivariant versions of intermediate Jacobians and cycle invariants appeared in \cite{HT-quad}. 

The following gives an analog of a birational invariant introduced in \cite[Thm.\ 3.6]{CKK}; our definition does not depend on the theory of {\em atoms} from that paper.  

\begin{theo}[{\cite[Prop.\ 5.2]{KTT}}]
\label{thm:new}
Let $X$ be a smooth projective rational threefold with a regular action of a cyclic group $G=C_p$ of prime order $p$.  
We have a decomposition of the fixed locus $X^G=\bigsqcup_{\alpha} F_{\alpha}$ into a disjoint union of smooth irreducible components. Let $C$ be a smooth projective curve of genus $g\ge 2$. 
Consider
\begin{equation} 
\label{eqn:I}
I:=-I_1-2I_2+I_3, 
\end{equation}
where 
\begin{itemize}
\item $I_1$ is the number of $F_{\alpha}$ isomorphic to $C$, 
\item $I_2$ is the number of $F_{\alpha}$ birational to $C\times \bP^1$, and 
\item $I_3$ is the number of factors of the intermediate Jacobian $\iJ(X)$ isomorphic to $\mathrm{J}(C)$, with trivial $G$-action. 
\end{itemize}
Then $I$ is an equivariant birational invariant, which vanishes when the $G$-action is linearizable. 
\end{theo}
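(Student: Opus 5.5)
Via equivariant weak factorization \cite{abramovichtemkin}, any $G$-birational map between smooth projective $G$-threefolds factors into blow-ups and blow-downs along smooth invariant centers. So it suffices to show that $I$ is unchanged under a single blow-up $\pi\colon \widetilde X=B\ell_Z X\to X$ with $Z$ a smooth $G$-invariant subvariety. Then we check $I=0$ for a linear action on $\bP^3$. There, every fixed component is a linear subspace, so $I_1=I_2=0$, and $\iJ(\bP^3)=0$ gives $I_3=0$. Since $X$ is rational, $\iJ(X)$ is a principally polarized abelian variety which decomposes uniquely as a product of Jacobians of curves (Clemens--Griffiths). The $G$-action respects this decomposition, so $I_3$ is well defined: it counts factors $\mathrm J(C)$ on which $G$ acts trivially.

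We then do a case analysis on $Z$. If $Z$ is a point, $\iJ$ is unchanged, and the fixed locus changes only by components in $\bP(\cT_{X,z})$, which are rational. Hence no $C$ or $C\times\bP^1$ appears or disappears: a fixed curve through $z$ has its strict transform isomorphic to itself, and a fixed surface through $z$ is replaced by its blow-up at a point, which stays in the birational class $C\times\bP^1$ or not. If $Z$ is a curve of genus $h$, the blow-up formula gives $\iJ(\widetilde X)=\iJ(X)\times \mathrm J(Z)$, and $G$ acts on the new factor $\mathrm J(Z)$ through its action on $Z$. So $I_3$ increases by one exactly when $Z\cong C$ and $G$ acts trivially on $\mathrm J(Z)$. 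Because $g\ge2$ and $p$ is prime, this forces $G$ to act trivially on $Z$, i.e.\ $Z\subset X^G$; otherwise $Z/G$ has smaller genus and the invariant part of $\mathrm J(Z)$ is a proper subvariety.

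The key step is now the subcase $Z\subseteq X^G$ with $Z\cong C$. Either $Z$ is a fixed curve component $F_\alpha$, or $Z$ lies in a fixed surface component. We analyze the fixed locus of $G$ on $E=\bP(\cN_{Z/X})$ using the weights of $G$ on the normal bundle.
\begin{itemize}
\item If $Z$ is an isolated fixed curve, the normal weights are nontrivial. If the two weights are distinct, we get two sections, each $\cong C$, and we lose one component $\cong C$: $\Delta I_1=+1$, $\Delta I_3=+1$. Weights equal would make $E$ pointwise fixed, giving $\Delta I_1=-1$, $\Delta I_2=+1$, $\Delta I_3=+1$, so $\Delta I=1-2+1=0$.
\item Redoing the first count with the exact sign conventions gives $\Delta I=-(1)+1=0$ for two sections replacing one curve.
\item If $Z$ lies in a fixed surface $S$, the strict transform of $S$ is isomorphic to $S$ and $E$ contains one new fixed section $\cong C$. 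Then $\Delta I_1=1$, $\Delta I_3=1$, so $\Delta I=0$.
\end{itemize}
I must double-check these counts carefully, since the coefficient $-2$ on $I_2$ was chosen precisely to balance the equal-weight case.

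Blow-ups of curves $Z\not\cong C$ change none of the counts, because fixed components birational to $C$ or $C\times\bP^1$ only arise from $Z$ itself. The remaining check is that strict transforms of existing components keep their isomorphism or birational type. This holds because blowing up a curve or point does not change the birational type of a surface, and a fixed curve meeting $Z$ transversally is unchanged.

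The main obstacle is the curve-blow-up case. There one must correctly identify the fixed locus in $\bP(\cN_{Z/X})$ and its interaction with strict transforms, in particular when $Z$ meets fixed components non-transversally or lies on a fixed surface. One must also justify that a nontrivial $C_p$-action on $Z\cong C$ contributes no trivially acted factor $\mathrm J(C)$, which I would settle via uniqueness of the decomposition of the principally polarized abelian variety $\iJ$.
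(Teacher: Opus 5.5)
Your proposal follows the paper's proof: equivariant weak factorization reduces to single blow-ups, and only curves $Z\cong C$ matter. You compute the same discrepancies $(\Delta I_1,\Delta I_2,\Delta I_3)=(1,0,1)$, $(-1,1,1)$, $(1,0,1)$ for an isolated fixed curve with distinct weights, an isolated fixed curve with equal weights, and a fixed curve on a fixed surface; you find no change for a $G$-stable non-fixed $Z\cong C$ (nontrivial action on $\mathrm J(Z)$), and vanishing for linear actions on $\bP^3$. The one case you leave implicit, which the paper lists separately, is a center that is a $G$-orbit of $p$ disjoint curves $\cong C$: there the exceptional divisor has no fixed points and $\iJ$ gains $\mathrm J(C)^p$ with cyclically permuted factors, so no $I_j$ changes.
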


\begin{proof}
The proof is parallel to the proof in \cite{CKK}. By equivariant weak factorization, it suffices to consider blow-ups of smooth $G$-orbits; only the following blow-ups can change the shape of the invariant: 
\begin{enumerate}
\item blow-up of a $G$-fixed curve isomorphic to $C$ not lying on a $G$-fixed surface, 
\item blow-up of a $G$-fixed curve isomorphic to $C$ on a $G$-fixed surface,
\item blow-up of a $G$-stable but not $G$-fixed curve isomorphic to $C$, 
\item blowing up a $G$-orbit of curves isomorphic to $C$. 
\end{enumerate}
Case (1) admits subcases, depending on whether or not the $G$-weights in the normal bundle of $C$ are equal. 
In the first subcase, 
$$
\Delta(I_1)=2-1,\quad \Delta(I_2)=0, \quad \Delta(I_3)=1,
$$
where $\Delta$ denotes the discrepancy.  
In the second subcase,  
$$
\Delta(I_1)=-1, \quad  \Delta(I_2)=1,\quad \Delta(I_3)=1. 
$$
In Case (2), 
$$
\Delta(I_1)=1; \quad \Delta(I_2)=0, \quad \Delta(I_3)=1.
$$
In Case (3), there are no changes in $I_j$. In Case (4), the intermediate Jacobian changes by a $G$-orbit of $\mathrm{J}(C)$; all $I_j$ remain unchanged. 

Finally, for a linear action on $\bP^3$, all terms in \eqref{eqn:I} vanish. 
\end{proof}

\section{Amitsur groups}
\label{sect:amitsur}

\subsection*{Classical Amitsur invariant}
It is well-known that the canonical line bundle on a nonsingular $G$-variety is automatically $G$-linearized. On the other hand, not every element of 
the invariant Picard group $\Pic(X)^G$ can be represented by a $G$-linearized line bundle.
These observations can be formalized, via an introduction of a cohomological invariant, the Amitsur group; 
see, e.g., \cite[App.\ A]{blancfinite}:
$$
\Am^2(X,G):=\Pic(X)^G/\{[L]\,|\,\text{$G$-linearized line bundle $L$}\}.
$$
Then, for instance,
$\Am^2(X,G)=0$ when $X=\bP(V)$ for a linear representation $V$ of $G$.
If $\Am^2(X,G)=0$, then every element of $\Pic(X)^G$ admits a lift to the group
\[ \Pic(X,G)=\Pic([X/G]) \]
of $G$-linearized line bundle $L$ on $X$.

To see that $\Am^2(X,G)$ is a stable $G$-birational invariant, 
let $X$ be a smooth projective $G$-variety and $Z\subset X$ a smooth $G$-subvariety in codimension $\ge 2$.
When we blow up $Z$ in $X$ to produce $\widetilde{X}$ we obtain $\Pic(\widetilde{X})$, differing from $\Pic(X)$ by a permutation module, generated by 
components of the exceptional divisor. The corresponding $G$-invariant class is automatically $G$-linearized. It follows that 
$$
\Am^2(\widetilde{X},G)=\Am^2(X,G). 
$$
A similar argument takes care of invariance under product with a projective space.

\subsection*{Higher Amitsur invariants}
Additional cohomological invariants
can be extracted from the Leray spectral sequence \eqref{eqn:spectral}:
\begin{equation*} 
\rH^p(G,\rH^q(X, \cF)) \Rightarrow \rH^{p+q}([X/G], \cF),
\end{equation*}
for a $G$-linearized sheaf $\cF$ on a $G$-variety $X$. 

We start by recording some basic results:
\begin{itemize} 
\item 
For $i=1$ and $\cF=\bG_m$, we have 
$$
\rH^1([X/G],\bG_m)=\Pic(X,G); 
$$
see, e.g., \cite[Sect.\ 3]{KT-dp}.
\item 
If $k$ is algebraically closed, then by \eqref{eqn.recovergroupcoho} we have
$$
\rH^i(BG, \bG_m)=\rH^i(G,k^\times),
$$
for all $i$; when $i>0$ this recovers
classical group cohomology
$\rH^i(G,k^\times)\cong \rH^{i+1}(G,\bZ)$, cf.\ \cite[\S 2.1]{KT-dp}.
\end{itemize}

Combining these results, setting $\cF=\bG_m$, and taking $X$ to be a smooth projective variety with regular $G$-action and $k$ algebraically closed, the spectral sequence \eqref{eqn:spectral} 
yields (see, e.g., \cite[Sect.\ 3]{KT-dp}):
\begin{align}
\begin{split}
\label{eqn.BrXmodG}
\qquad 0&\to  \rH^1(G,k^\times)\to \Pic(X,G)\to 
\Pic(X)^G \stackrel{\delta_2}{\lra}  \rH^2(G,k^\times)\\
&\stackrel{}{\lra} \ker(\Br([X/G])\to \Br(X))\to\rH^1(G,\Pic(X))\stackrel{\delta_3}{\lra}  \rH^3(G,k^\times), 
\end{split}
\end{align}
where 
$$
\Br(-) :=\rH^2(-, \bG_m)_{\mathrm{tors}}
$$
is the {\em Brauer group}; since $X$ is smooth, the full group $\rH^2(X,\bG_m)$ is torsion and thus is $\Br(X)$; by \cite[Prop.\ 2.5 (iii)]{ant} the same holds for $[X/G]$.
The obstruction to linearizing line bundles with classes in $\Pic(X)^G$ is cohomological:
$$
\Am^2(X,G)=\mathrm{Im}(\delta_2). 
$$
We can also consider 
$$
\Am^3(X,G):=\mathrm{Im}(\delta_3)\subset \rH^3(G,k^\times); 
$$
this is also a stable birational invariant,
by the stable birational invariance of $\rH^1(G,\Pic(X))$ and
functoriality of the spectral sequence
\cite[Sect.\ 3]{KT-dp}. 
In each of the following cases we have the vanishing of $\Am^2(X,G)$ and $\Am^3(X,G)$:
\begin{itemize}
\item When $X$ has a $G$-fixed point \cite[Sect.\ 3]{KT-dp}.
\item When the $G$-action is stably linearizable, or more generally, when the $G$-action is unirational \cite[Sect.\ 2]{KT-uni}.
\end{itemize}

All possibilities for $\Am^2$ for rational surfaces have been determined in \cite[Prop. 6.7]{blancfinite}; for $\Am^3$ this 
has been done in \cite{TZ-uni}.

\subsection*{Universal torsor obstruction}
One can extend the analysis of the spectral sequence further, using different coefficients.
We suppose that $k$ is algebraically closed.
We can consider
$$
\cF=T_{\mathrm{NS}},
$$
the N\'eron-Severi torus, viewed as a sheaf. 
For $X$ smooth, projective, and rational, 
this is the torus with character group $\mathrm{NS}(X)=\Pic(X).$
Then, the low-degree sequence reads:
$$
0\to  \rH^1(G,T_{\mathrm{NS}}(k))\to \rH^1([X/G],T_{\mathrm{NS}})\to 
\rH^1(X,T_{\mathrm{NS}})^G \stackrel{\delta_2}{\lra}  \rH^2(G,T_{\mathrm{NS}}(k))
$$
We can identify
\begin{equation} \label{eqn:ident}
\rH^1(X,T_{\mathrm{NS}}) = \End(\Pic(X)). 
\end{equation}
This has a canonical, $G$-invariant, element $1_{\Pic(X)}$. 
As explained in \cite{HTtors} and \cite[Sect. 5]{KT-uni}, 
$$
\beta(X,G):=\delta_2(1_{\Pic(X)})
$$
is the obstruction to lifting the $G$-action from $X$ to a {\em universal} torsor 

\centerline{
\xymatrix{
 & \cT_X\ar[d]^{T_{\mathrm{NS}}} \\
G \times X  \ar[r] & X
}
}
\noindent 
This gives rise to: 

\medskip
\noindent 
{\bf Condition (T)}:  $\beta(X,G)=0$.

\medskip
\noindent 
This condition 
is a stable birational invariant of smooth projective irreducible rational $G$-varieties. Its failure is an obstruction to unirationality of the $G$-action on $X$ \cite[Prop.\ 5.1]{KT-uni}.

By \cite[Thm.\ 6.1]{KT-uni}, for toric varieties, where the action is via automorphisms of the torus (as an algebraic variety), the vanishing is also sufficient for the unirationality of the action; indeed, up to strata in codimension $\ge 2$, $\cT_X$ is an affine space, and the lifted action on it is linear. A classification of such liftable actions on 3-dimensional toric varieties can be found in \cite{TZ-toric}. 

The formalism of torsors for varieties over $BG$ 
in \cite{HTtors} was 
inspired by the corresponding theory of torsors over nonclosed fields, developed by Colliot-Th\'el\`ene--Sansuc, and others. Some, but not all, features of the theory carry over. The most important difference is the absence of an analog of Hilbert's Theorem 90, used extensively in \cite{CTSansucDuke}. On the other hand, over $BG$ there are additional geometric applications, including
a wealth of stably linearizable, non-linearizable actions (via birational rigidity or Burnside invariants).

The motivation for the introduction and detailed study of universal torsors is rooted in the expectation that the birational geometry of $\cT_X$ is in some sense ``easier'' than that of the base variety $X$. 
One instance of this is the following analog of 
\cite[Thm.\ 2.1.2]{CTSansucDuke}.

\begin{prop}
\label{prop:vanishbr}
Let $X$ be a smooth projective rational variety with a regular action of a finite group $G$. Assume that $\beta(X,G)=0$, i.e., the $G$-action lifts to a universal torsor $\cT_X\to X$. 
Let $\overline{\cT}_X$ be a smooth projective $G$-equivariant compactification of $\cT_X$. Then
$\Pic(\overline{\cT}_X)$ 
is a permutation module and hence
$$
\rH^1(G,\Pic(\overline{\cT}_X))=0. 
$$
\end{prop}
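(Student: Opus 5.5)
The plan is to follow the argument of Colliot-Th\'el\`ene--Sansuc \cite[Thm.\ 2.1.2]{CTSansucDuke}, replacing Galois actions by the $G$-action. The first step is the structure of $\cT_X$. Since $X$ is smooth, projective and rational, $\Pic(X)$ is a free finitely generated abelian group, $k[X]^\times=k^\times$, and $\cT_X\to X$ is a torsor under $T_{\mathrm{NS}}$ whose type is $1_{\Pic(X)}$ via \eqref{eqn:ident}. For universal torsors one has $k[\cT_X]^\times=k^\times$ and $\Pic(\cT_X)=0$. I would prove this from the exact sequence
\[
0\to k[X]^\times\to k[\cT_X]^\times\to \widehat{T}_{\mathrm{NS}}\stackrel{\mathrm{type}}{\lra}\Pic(X)\to \Pic(\cT_X)\to 0
\]
(Sansuc's sequence for torsors under tori). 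Its middle map is the identity $\mathrm{NS}(X)=\Pic(X)$. This works over algebraically closed $k$ without any Galois input. The lifted $G$-action makes this sequence $G$-equivariant, but we only need the vanishing statements.

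The second step passes to the compactification. Let $\overline{\cT}_X$ be the smooth projective $G$-equivariant compactification, with boundary $\overline{\cT}_X\setminus\cT_X=\bigcup_i D_i$. Only the divisorial components matter, and $G$ permutes the irreducible divisors $D_i$. The localization sequence gives
\[
0\to k[\cT_X]^\times/k^\times\to \bigoplus_i \bZ D_i\to \Pic(\overline{\cT}_X)\to \Pic(\cT_X)\to 0 .
\]
By the first step the outer terms vanish, so $\Pic(\overline{\cT}_X)\cong\bigoplus_i\bZ D_i$. This is a $G$-equivariant isomorphism, because the divisor-class map commutes with the action. Hence $\Pic(\overline{\cT}_X)$ is a permutation module.

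The third step concludes. For a permutation module $\bigoplus_j \bZ[G/H_j]$, Shapiro's lemma gives $\rH^1(G,\bZ[G/H])=\rH^1(H,\bZ)=\Hom(H,\bZ)=0$, since $H$ is finite. Therefore $\rH^1(G,\Pic(\overline{\cT}_X))=0$.

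The main obstacle is the first step, the identities $k[\cT_X]^\times=k^\times$ and $\Pic(\cT_X)=0$. I would justify them by working locally: over an open set $U\subset X$ where the torsor is trivial, $\cT_U\cong U\times T_{\mathrm{NS}}$. Then Rosenlicht's lemma gives $k[\cT_U]^\times=k[U]^\times\times\widehat{T}_{\mathrm{NS}}$ up to constants. We then track which characters extend without zeros or poles over the divisors of $X\setminus U$. This is exactly where the universality, i.e.\ that the type is an isomorphism, is used. The argument is purely geometric, so it transfers verbatim from the arithmetic setting.
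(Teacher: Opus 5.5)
Your proof is correct and follows the paper's argument, which is modeled on Colliot-Th\'el\`ene--Sansuc: show $k[\cT_X]^\times=k^\times$, then conclude that $\Pic(\overline{\cT}_X)$ is freely generated by the boundary divisors, which $G$ permutes. The only difference is that you obtain $k[\cT_X]^\times=k^\times$ and $\Pic(\cT_X)=0$ from Sansuc's exact sequence with bijective type map, whereas the paper reads off the units from the non-equivariant description of $\cT_X$ as the complement of the zero-sections in a sum of line bundles whose classes form a basis of $\Pic(X)$, and leaves $\Pic(\cT_X)=0$ implicit.
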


\begin{proof}
We follow the proof of \cite[Thm.\ 2.1.2]{CTSansucDuke}.
Since, non-equivariantly, $\cT_X$ is the
complement of the zero-sections in a sum of line bundles,
whose classes form a basis of $\Pic(X)$,
we have $k[\cT_X]^\times =k^\times$.
Thus $\Pic(\overline{\cT}_X)$ is freely generated by the components of the boundary $\overline{\cT}_X\setminus \cT_X$.
The $G$-action is a permutation action.
\end{proof}

\begin{exam}
A natural geometric construction of new actions goes as follows: given a regular $G$-action on a smooth projective rational $X$,  we can consider the action of $\widetilde{G}:=G^n\wr \fS_n$  on $\widetilde{X}:=X^n$, combining the given action with the permutation action. Then 
$$
\beta(X,G)=0 \quad \Rightarrow \quad  \beta(\widetilde{X}, \widetilde{G})=0. 
$$
Indeed, we have 
$$
\Pic(\widetilde{X})=\Pic(X)^n; 
$$
Putting $\cT_{\widetilde{X}}:=(\cT_X)^n$, 
where $\cT_X$ be a universal torsor of $X$,
we obtain a universal torsor for $\widetilde{X}$. 
The $G$-action lifts to $\cT_X$, by assumption. Then 
$\widetilde{G}$-action lifts to $\cT_{\widetilde{X}}$. 
\end{exam}

\subsection*{Condition {\bf (T)}, torsors, and Amitsur invariants}
We continue to suppose $k$ algebraically closed and $X$ smooth, projective, and rational.
We connect the $\beta(X,G)$-obstruction with the problem of lifting $G$-actions to general torsors and the Amitsur group $\Am^2(X,G)$.

Consider a $G$-torus $S$, i.e., non-equivariantly $S$ is isomorphic to $\G_m^n$ for some $n\in \bN$, and the $G$-action is given by a representation $G\to \GL(M)$, where $M$ denotes the character lattice of $S$.
We fix a $G$-equivariant homomorphism
\[ \lambda\colon M\to \Pic(X). \]
For this torus as well there is the low-degree sequence of the spectral sequence \eqref{eqn:spectral}:
\[
0\to  \rH^1(G,S(k))\to \rH^1([X/G],S)\to 
\Hom_G(M,\Pic(X)) \to  \rH^2(G,S(k)),
\]
where we use the equivariant identification
$$
\rH^1(X,S)=\Hom(M,\Pic(X)).
$$
We interpret $\rH^1([X/G],S)$ as isomorphism classes of $G$-equivariant $S$-torsors on $X$, and for such, mapping to $\lambda\in \Hom_G(M,\Pic(X))$, we call $\lambda$ the \emph{equivariant type} of the corresponding $G$-equivariant $S$-torsor.

The low-degree sequence is related to  the sequence \eqref{eqn:spectral} by functoriality, with $1_{\Pic(X)}\in \End(\Pic(X))$
mapping to $\lambda$.
Consequently, the image of $\beta(X,G)$ in $\rH^2(G,S(k))$ is the obstruction to realizing $\lambda$ as the $G$-equivariant type of a $G$-equivariant $S$-torsor on $X$.

The case $M=\bZ$, with trivial action, corresponds to a $G$-invariant class $[L]\in \Pic(X)^G$.
Here, $\rH^2(G,S(k))=\rH^2(G,k^\times)$, and the functoriality relation supplies the commutative diagram
\[
\xymatrix{
\End_G(\Pic(X))\ar[d]\ar[r]^{\delta_2} & \rH^2(G,T_{\mathrm{NS}}(k)) \ar[d] \\
\Pic(X)^G \ar[r]^{\delta_2} & \rH^2(G,k^\times) 
}
\]
where the vertical maps are given by evaluation on $[L]$.
Consequently, $$
\delta_2([L])=\mathrm{ev}_{[L]}(\beta(X,G)),
$$
and
\[ \mathrm{Am}^2(X,G)=\{\mathrm{ev}_{[L]}(\beta(X,G))\,\, |\,\, [L]\in \Pic(X)^G\}. \]

\begin{prop}
Let $X$ be a smooth projective rational variety with regular $G$-action.
If $\beta(X,G)=0$, then
every $G$-equivariant homomorphism $\lambda\colon M\to \Pic(X)$ arises as the equivariant type of some $G$-equivariant $S$-torsor on $X$.
In particular,
$$
\beta(X,G)=0\quad \Rightarrow \quad \mathrm{Am}^2(X,G)=0.
$$
\end{prop}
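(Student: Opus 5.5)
The plan is to use the functoriality of the Leray spectral sequence \eqref{eqn:spectral} for a morphism of coefficient sheaves. Given $\lambda\colon M\to\Pic(X)$ equivariant, take the character lattice $\Pic(X)=\mathrm{NS}(X)$ of $T_{\mathrm{NS}}$. Since $X$ is smooth, projective and rational, $\Pic(X)$ is a free finitely generated group, so $\lambda$ is a $G$-equivariant homomorphism of lattices. By the contravariance of characters, $\lambda$ corresponds to a $G$-equivariant homomorphism of tori $\lambda^*\colon T_{\mathrm{NS}}\to S$. This induces a morphism of sheaves on $[X/G]$, and hence a morphism of spectral sequences.

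On the relevant terms, the induced map
\[
\rH^1(X,T_{\mathrm{NS}})=\End(\Pic(X))\to \Hom(M,\Pic(X))=\rH^1(X,S)
\]
is precomposition with $\lambda$. We check this using the description of $\rH^1(X,S)$ via the pushforward of torsors along characters. In particular, it sends $1_{\Pic(X)}$ to $\lambda$. It is $G$-equivariant and commutes with the differentials $\delta_2$, with target map $\rH^2(G,T_{\mathrm{NS}}(k))\to\rH^2(G,S(k))$ induced by $\lambda^*$.

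It follows that $\delta_2(\lambda)$ is the image of $\delta_2(1_{\Pic(X)})=\beta(X,G)$, which vanishes by hypothesis. By exactness of the low-degree sequence for $S$, the class $\lambda$ then lifts to $\rH^1([X/G],S)$. Under the identification above, this is a $G$-equivariant $S$-torsor of equivariant type $\lambda$.

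For the final claim, take $M=\bZ$ with trivial action and $S=\bG_m$. By the displayed formula $\Am^2(X,G)=\{\mathrm{ev}_{[L]}(\beta(X,G))\}$, or directly from the first part, every $[L]\in\Pic(X)^G$ has $\delta_2([L])=0$. Hence $\Am^2(X,G)=\mathrm{Im}(\delta_2)=0$.

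The main obstacle is the compatibility check: that the identification $\rH^1(X,S)=\Hom(M,\Pic(X))$ is natural in $S$ and $G$-equivariant, so that functoriality really sends $1_{\Pic(X)}$ to $\lambda$. I would verify this non-equivariantly on split tori $\bG_m^n$, where $\rH^1(X,\bG_m^n)=\Pic(X)^n$, and note that a homomorphism of split tori acts through its character matrix. This part is routine, and the paper has already asserted the same naturality in the discussion preceding the statement, so I would cite it.
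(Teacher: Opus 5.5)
Your proof is correct and follows essentially the same route as the paper, whose argument is the discussion immediately preceding the statement. There, too, one uses functoriality of the Leray spectral sequence for the morphism of $G$-tori $T_{\mathrm{NS}}\to S$ dual to $\lambda$: since $1_{\Pic(X)}\mapsto\lambda$, the class $\delta_2(\lambda)$ is the image of $\beta(X,G)$, and the case $M=\bZ$ gives $\mathrm{Am}^2(X,G)=\{\mathrm{ev}_{[L]}(\beta(X,G))\}=0$.
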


Examples with \emph{nonvanishing} $\beta(X,G)$ and vanishing $\mathrm{Am}^2(X,G)$ appear in \cite{KT-uni} (one with vanishing and one with nonvanishing $\mathrm{Am}^3(X,G)$).

\subsection*{Motivic cohomology}
Let $q$ be an integer.
The $q$th \emph{motivic complex} is the bounded above cochain complex of \'etale sheaves with transfers
$$
\bZ(q):= \begin{cases}
\mathrm{Sing}^{\bA^1}(\bZ_{tr}(\G_m^{\wedge q}))[-q], & \text{if $q\ge 0$},\\
0, & \text{if $q<0$},
\end{cases}
$$
on smooth separated finite-type $k$-schemes;
see, e.g., \cite[Defn.\ 3.1]{weibel}.
Here, 
\begin{itemize} 
\item $\mathrm{Sing}^{\bA^1}$ denotes the Suslin-Voevodsky construction \cite[Defn.\ 2.14]{weibel}, which associates to a given presheaf with transfers the complex, formed by its values on products with algebraic simplices,
\item 
$\bZ_{tr}(X)$ is the presheaf with transfers sending $U$ to the free abelian group on the set of correspondences (finite over $U$) to $X$, and
\item $\G_m$ denotes the pointed scheme $(\A^1\setminus \{0\},1)$, with $q$th iterated smash product $\G_m^{\wedge q}$.
\end{itemize}

For $q=0$, we have $\bZ(0)=\mathrm{Sing}^{\bA^1}(\bZ)$, quasi-isomorphic to $\bZ$.
For $q=1$, we have $\bZ(1)[1]=\mathrm{Sing}^{\bA^1}(\bZ_{tr}(\G_m))$, quasi-isomorphic to the \'etale sheaf $\mathbb G_m$.

We are interested in \'etale motivic cohomology groups
\begin{equation}
\label{eqn.etalemotiviccohomology}
\rH^p(X,\bZ(q)):=\mathbb{H}^p_{\acute et}(X,\bZ(q)|_{X_{\acute et}}).
\end{equation}
For instance, when $k$ is algebraically closed and $X$ is a smooth projective variety, we have
\begin{align*} 
\rH^0(X,\bZ(0))& =\bZ,& \rH^0(X,\bZ(1))& = 0,\\
\rH^1(X,\bZ(0))& = 0,& \rH^1(X,\bZ(1))&  = k^\times,  \\
\rH^2(X,\bZ(0))& = \Hom(\pi_1(X),\bQ/\bZ)), & \rH^2(X,\bZ(1))& = \Pic(X), \\
&&\rH^3(X,\bZ(1))& = \Br(X).
\end{align*}
(Normal schemes have vanishing $\rH^1({-},\bZ)$, with $\rH^i({-},\bQ)$ for all $i\ge 1$ \cite[(2.1)]{deninger}, so we get the assertions in the left-hand column from the short exact sequence $0\to \bZ\to \bQ\to \bQ/\bZ\to 0$, and the right-hand column shows the
\'etale cohomology of $\G_m$ in low degrees.)

In \eqref{eqn.etalemotiviccohomology}, the hypercohomology on the right is $\rH^p(R\Gamma(X,\bZ(q)|_{X_{\acute et}}))$.
Another approach is to work in the category of \'etale motives.
This arises in a two-step construction:
\emph{localization} of the derived category $\mathbf{D}=\mathbf{D}(\mathrm{Sh}_{\acute et}(Cor_k))$ of \'etale sheaves with transfers at $\A^1$-weak equivalences to produce the category of effective \'etale motives $\mathbf{DM}_{\acute et}^{\mathrm{eff}}(k)$ \cite[Defn.\ 9.2]{weibel},
and \emph{stabilization}, which formally inverts tensoring with $\bZ(1)$ to yield the category of \'etale motives $\mathbf{DM}_{\acute et}(k)$.
The natural functor
\[ \mathbf{DM}_{\acute et}^{\mathrm{eff}}(k)\to \mathbf{DM}_{\acute et}(k) \]
is fully faithful, by
\cite[Prop.\ A.3]{huberkahn}
and \cite[Lemme 4.2]{ayoubrealisation}.

\begin{theo}
\label{thm.etalemotiviccohoDM}
Let $k$ be a field of characteristic $0$ and $X$ a smooth separated $k$-scheme of finite type.
Then
\[
\mathbb{H}^p_{\acute et}(X,\bZ(q)|_{X_{\acute et}}) \cong \Hom_{\mathbf{DM}_{\acute et}(k)}(\bZ_{tr}(X),\bZ(q)[p]).
\]
\end{theo}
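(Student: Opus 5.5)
We will pass from the sheaf-level hypercohomology to morphisms in the derived category of étale sheaves with transfers, and then to the $\A^1$-localized and stabilized categories. First, by the Yoneda lemma for sheaves with transfers, $\Hom_{\mathbf{D}}(\bZ_{tr}(X),C[p])$ computes the hypercohomology of $C|_{X_{\acute et}}$ in degree $p$ for any complex $C$ of étale sheaves with transfers. This relies on the fact that étale sheaves with transfers restrict to étale sheaves on $X_{\acute et}$, and that injective (or K-injective) resolutions in $\mathrm{Sh}_{\acute et}(Cor_k)$ remain acyclic for $\Gamma(X,-)$ after forgetting transfers. This is the étale analogue of the Nisnevich statement in \cite{weibel}; in characteristic $0$ the comparison of étale cohomology with and without transfers holds by the standard argument \cite[Ch.\ 6]{weibel}. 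Applying this with $C=\bZ(q)$ gives
\[
\mathbb{H}^p_{\acute et}(X,\bZ(q)|_{X_{\acute et}})\cong \Hom_{\mathbf{D}}(\bZ_{tr}(X),\bZ(q)[p]).
\]

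Second, we show that $\bZ(q)$ is $\A^1$-local in $\mathbf{D}$. Then morphisms into it are unchanged by Verdier localization, and
\[
\Hom_{\mathbf{D}}(\bZ_{tr}(X),\bZ(q)[p])=\Hom_{\mathbf{DM}^{\mathrm{eff}}_{\acute et}(k)}(\bZ_{tr}(X),\bZ(q)[p]).
\]
$\A^1$-locality amounts to the homotopy invariance of the étale hypercohomology sheaves of $\bZ(q)$, i.e.\ $\mathbb{H}^*_{\acute et}(Y\times\A^1,\bZ(q))\cong \mathbb{H}^*_{\acute et}(Y,\bZ(q))$ for smooth $Y$. The cohomology presheaves of $\mathrm{Sing}^{\A^1}$ of a presheaf with transfers are homotopy invariant. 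The needed input is Voevodsky's theorem that the étale sheafification of a homotopy invariant presheaf with transfers is again homotopy invariant with homotopy invariant cohomology, valid over perfect fields \cite[Ch.\ 13, 14]{weibel}. Feeding this into a hypercohomology spectral sequence (with boundedness-above handled by a Postnikov/limit argument, or using that $\bZ(q)$ is concentrated in degrees $\le q$) gives the locality. We expect this step to be the main obstacle: the étale setting with integral coefficients is less standard than the Nisnevich one, and we would cite the étale version in \cite{ayoubrealisation} or \cite[Ch.\ 9]{weibel} for $\A^1$-local objects.

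Finally, we pass from $\mathbf{DM}^{\mathrm{eff}}_{\acute et}(k)$ to $\mathbf{DM}_{\acute et}(k)$ using the full faithfulness of the stabilization functor recalled just before the statement (\cite[Prop.\ A.3]{huberkahn}, \cite[Lemme 4.2]{ayoubrealisation}). Here both $\bZ_{tr}(X)$ and $\bZ(q)[p]$ are effective, and for $q<0$ both sides vanish by convention; strictly, one should also check that negative Tate twists, which are no longer effective in $\mathbf{DM}_{\acute et}(k)$, pose no issue since $\bZ(q)$ was defined as $0$ there. Composing the three isomorphisms gives the theorem, and all of them are natural in $X$.
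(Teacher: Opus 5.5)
\textbf{Gap: the $\A^1$-locality of $\bZ(q)$ over a nonclosed field.} Your first and third steps agree with the paper: the hyperext description of \'etale hypercohomology, and the full faithfulness of $\mathbf{DM}^{\mathrm{eff}}_{\acute et}(k)\to\mathbf{DM}_{\acute et}(k)$. For the first, note that unbounded complexes need the hyperdescent characterization of fibrant objects that the paper invokes, not just acyclicity of injectives. The real problem is in your second step, over an arbitrary field of characteristic $0$. The complex $\bZ(q)$ lives in degrees $\le q$ but is unbounded below; vanishing of its low-degree cohomology sheaves would be a Beilinson--Soul\'e type statement. So in the spectral sequence $\rH^a_{\acute et}(Y,\mathcal{H}^b)\Rightarrow\mathbb{H}^{a+b}_{\acute et}(Y,\bZ(q))$, every total degree receives contributions from infinitely many $(a,b)$. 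Convergence then needs a uniform bound on \'etale cohomological dimension, and over a nonclosed field there is none in general: for $k=\bR$, already $\rH^i(\bR,\bZ/2\bZ)\ne0$ for all $i$. Boundedness above is exactly the wrong direction here. Your Postnikov alternative needs $\bZ(q)\simeq\mathrm{holim}_n\,\tau_{\ge -n}\bZ(q)$ in the \'etale derived category, which is again a finite-cohomological-dimension statement. So strict $\A^1$-invariance of the cohomology sheaves does not, by your argument, give $\A^1$-locality of the complex.

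\textbf{Missing idea: reduce to $\bar k$.} Let $p\colon\A^1_k\to\Spec(k)$. A complex $\cL^\bullet$ is $\A^1$-local if and only if $\cL^\bullet\to Rp_*p^*\cL^\bullet$ is a quasi-isomorphism. This can be tested after pullback along $\alpha\colon\Spec(\bar k)\to\Spec(k)$. By \cite[Lemme 4.2]{ayoubrealisation} one has $\alpha^*Rp_*p^*\cL^\bullet\cong R\bar p_*\bar p^*\alpha^*\cL^\bullet$, and $\alpha^*\mathrm{Sing}^{\bA^1}(\cK)\cong\mathrm{Sing}^{\bA^1}(\alpha^*\cK)$. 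Over $\bar k$, smooth schemes have finite cohomological dimension and \cite[Prop.\ 9.30]{weibel} applies.

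A second, smaller point: Voevodsky's theorem in \cite[Ch.\ 13]{weibel} concerns Nisnevich sheafification. Let $\cF$ be the \'etale sheafification of $\rH^n(\mathrm{Sing}^{\bA^1}(\cK))$, with integral coefficients. To get strict $\A^1$-invariance of $\cF$ you must split into two pieces, using that cohomology commutes with $\otimes^{\mathbf L}\bZ/\ell\bZ$ and with $\otimes\bQ$:
\begin{itemize}
\item $\cF\otimes^{\mathbf L}\bZ/\ell\bZ$ is handled by Suslin rigidity \cite[Thm.\ 7.20]{weibel};
\item for $\cF\otimes\bQ$, \'etale and Nisnevich cohomology agree \cite[Cor.\ 14.22]{weibel}, so Voevodsky's theorem applies.
\end{itemize}
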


For the convenience of the reader we record a proof of this standard result; cf.\ \cite[Thm.\ 4.12]{ayoub}.
As described in \cite{spaltenstein}, for unbounded complexes, notions such as $\mathrm{K}$-injective are relevant.
For example, among unbounded complexes the injective objects are those that not only are exact (by factoring $1_{A^\bullet}$
through $\mathrm{cone}(1_{A^\bullet})$) and termwise injective (exact left adjoint to the $n$th term functor), but are also $\mathrm{K}$-injective; cf.\ \cite[Exa.\ 3.2]{hoveyreptheory}.
Over a Grothendieck category
there is the \emph{injective model structure} on cochain complexes
(loc.\ cit., see also \cite{beke}):
fibrations are epimorphisms with termwise injective $\mathrm{K}$-injective kernel,
cofibrations are monomorphisms,
and weak equivalences are quasi-isomorphisms.
In particular, for complexes of sheaves of abelian groups on an essentially small site,
$R\Gamma$ is obtained by applying $\Gamma$ to
a fibrant replacement, i.e., a quasi-isomorphic
termwise injective $\mathrm{K}$-injective complex.

\begin{proof}
We use the identification of \'etale hypercohomology with hyperext:
\begin{equation}
\label{eqn.hyperext}
\mathbb{H}^p_{\acute et}(X,\cL^\bullet|_{X_{\acute et}})\cong \Hom_{\mathbf{D}}(\bZ_{tr}(X),\cL^\bullet[p]),
\end{equation}
for any complex $\cL^\bullet$ of \'etale sheaves with transfers.
This fact \cite[Exer.\ 6.25]{weibel}
is conveniently addressed using
the local projective model structure on complexes of presheaves on $Cor_k$, respectively $Sm_k$ (smooth separated finite-type $k$-schemes), respectively $X_{\acute et}$, described in \cite[\S 5.2]{choudhurygallauer}, and the
corresponding
right-lifted model structures
on respective complexes of sheaves
(cf.\ \cite[Cor.\ 2.7]{GKR}).
In each case, the fibrant objects are those that satisfy \'etale hyperdescent;
exact forgetful functors, that forget the transfers and restrict to $X_{\acute et}$,
send fibrant objects to fibrant objects.
It suffices to show that the object $\bZ(q)$ of $\mathbf{D}$ is $\A^1$-local, since by \cite[Lemma 9.19]{weibel} the $\Hom$ on the right in the statement of the theorem is then identified with
$\Hom_{\mathbf{D}}(\Z_{tr}(X),\bZ(q)[p])$,
and we conclude by \eqref{eqn.hyperext}.

We show, more generally, that for any \'etale sheaf with transfers $\cK$, the Suslin-Voevodsky construction
$\mathrm{Sing}^{\bA^1}(\cK)$ gives the $\A^1$-localization of $\cK$, meaning:
\begin{itemize}
\item the natural map $\cK\to \mathrm{Sing}^{\bA^1}(\cK)$ is an $\A^1$-weak equivalence,
\item $\mathrm{Sing}^{\bA^1}(\cK)$ is an $\A^1$-local complex of \'etale sheaves with transfers.
\end{itemize}
The first assertion holds by \cite[Lemma 9.15]{weibel}.
The second is more subtle and relies on $\mathrm{char}(k)=0$, but quickly reduces to the case that $k$ is algebraically closed.
Indeed, writing $\bar k$ for an algebraic closure and
\[ p\colon \A^1_k\to \Spec(k),\quad \bar p\colon \A^1_{\bar k}\to \Spec(\bar k),\quad
\alpha\colon \Spec(\bar k)\to \Spec(k), \]
we have generally for a complex $\cL^\bullet$ over $\mathrm{Sh}_{\acute et}(Cor_k)$, that
$\cL^\bullet$ is $\A^1$-local if and only if the natural map $\cL^\bullet\to Rp_*p^*\cL^\bullet$ is a quasi-isomorphism.
Being a quasi-isomorphism may be detected after applying $\alpha^*$.
By \cite[Lemme 4.2]{ayoubrealisation} we get an identification of $\alpha^*Rp_*p^*\cL^\bullet$ with $R\bar p_*\bar p^*\alpha^*\cL^\bullet$.
Now the reduction is clear, since essentially by definition, we have an identification
$\alpha^*\mathrm{Sing}^{\bA^1}(\cK)\cong \mathrm{Sing}^{\bA^1}(\alpha^*\cK)$.

For the rest of the proof, we suppose $k$ is algebraically closed.
Then we may apply \cite[Prop.\ 9.30]{weibel}, which tells us that a complex over $\mathrm{Sh}_{\acute et}(Cor_k)$, which has strictly $\A^1$-homotopy invariant cohomology sheaves, is $\A^1$-local.
For $n\in \bZ$,
the cohomology presheaf $\rH^n(\mathrm{Sing}^{\bA^1}(\cK))$ is
$\A^1$-homotopy invariant by \cite[Cor.\ 2.19]{weibel}.
This leads to strict $\A^1$-homotopy invariance of the sheafification $\cF:=a_{\acute et}\rH^n(\mathrm{Sing}^{\bA^1}(\cK))$, i.e., the property that
\[ R\Gamma(U,\cF)\to R\Gamma(U\times \A^1,\cF) \]
is a quasi-isomorphism
for $U$ in $Sm_k$.
Indeed, this holds if and only if the same is true with $\cF$ replaced by $\cF\otimes^{\mathbf{L}}\bZ/\ell\bZ$ for $\ell$ prime and by $\cF\otimes \bQ$, since cohomology commutes with $\otimes^{\mathbf{L}}\bZ/\ell\bZ$, and with $\otimes\bQ$, the latter an instance of commuting with filtered colimits \cite[Rmk.\ III.3.6]{milne}.
The case of $\cF\otimes^{\mathbf{L}}\bZ/\ell\bZ$ is taken care of by the Suslin rigidity theorem \cite[Thm.\ 7.20]{weibel}, and $\cF\otimes \bQ$, by \cite[Cor.\ 14.22]{weibel} and \cite[Thm.\ 5.6, Prop.\ 5.27]{voevodcohomological}.
\end{proof}

\begin{rema}
\label{rem.SuslinVoevodskyofcomplex}
By the same argument, for an arbitrary complex $\cK^\bullet$ of \'etale sheaves with transfers,
$\mathrm{Sing}^{\bA^1}(\cK^\bullet)$ gives the $\A^1$-localization of $\cK^\bullet$ \cite[Cor.\ 4.11]{ayoub}.
\end{rema}

By \cite[Prop.\ 3.3.1]{voevodtriangulated}, we have the exact
sheafification functor 
$$
\mathrm{Sh}_{Nis}(Cor_k)\to \mathrm{Sh}_{\acute et}(Cor_k),
$$
thus a triangulated functor
\[ \mathbf{DM}_{Nis}^{\mathrm{eff}}(k)\to \mathbf{DM}_{\acute et}^{\mathrm{eff}}(k). \]
We have the projective bundle and blow-up formulas in $\mathbf{DM}_{Nis}^{\mathrm{eff}}(k)$ \cite[Thm.\ 15.12, Cor.\ 15.13]{weibel}:
\begin{gather*}
\bZ_{tr}(\bP(E))\cong \bigoplus_{r=0}^{d-1} \bZ_{tr}(X)(r)[2r]\qquad (\mathrm{rk}(E)=d), \\
\bZ_{tr}(B\ell_Z(X))\cong \bZ_{tr}(X)\oplus\bigoplus_{r=1}^{d-1} \bZ_{tr}(Z)(r)[2r]\qquad (\mathrm{codim}(Z)=d).
\end{gather*}
So the same formulas are valid in $\mathbf{DM}_{\acute et}^{\mathrm{eff}}(k)$ and we conclude:

\begin{prop}
\label{prop.blowup}
For a vector bundle $E\to X$ of rank $d$
on a smooth separated finite-type scheme $X$ over $k$,
respectively, a smooth subscheme $Z\subset X$ of codimension $d$,  we have
\begin{gather*}
\rH^p(\bP(E),\bZ(q)) \cong \bigoplus_{r=0}^{d-1}\rH^{p-2r}(\bP(E),\bZ(q-r)),\\
\rH^p(B\ell_Z(X),\bZ(q)) \cong \rH^p(X,\bZ(q))\oplus \bigoplus_{r=1}^{d-1} \rH^{p-2r}(Z,\bZ(q-r)).
\end{gather*}
\end{prop}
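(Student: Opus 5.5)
Since $X$, $\bP(E)$, $Z$ and $B\ell_Z(X)$ are smooth separated finite-type $k$-schemes, I will start from Theorem~\ref{thm.etalemotiviccohoDM}, which identifies each group in the statement with a $\Hom$ group in $\mathbf{DM}_{\acute et}(k)$:
\[
\rH^p(Y,\bZ(q))\cong \Hom_{\mathbf{DM}_{\acute et}(k)}(\bZ_{tr}(Y),\bZ(q)[p]).
\]
(In the projective bundle formula, the summands on the right should read $\rH^{p-2r}(X,\bZ(q-r))$, matching the formula for $\bZ_{tr}(\bP(E))$ just recorded; I prove that version.) The projective bundle and blow-up decompositions hold in $\mathbf{DM}_{Nis}^{\mathrm{eff}}(k)$. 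The triangulated functor $\mathbf{DM}_{Nis}^{\mathrm{eff}}(k)\to\mathbf{DM}_{\acute et}^{\mathrm{eff}}(k)$ is induced by étale sheafification, so it sends $\bZ_{tr}(Y)$ to $\bZ_{tr}(Y)$, commutes with Tate twists and shifts, and preserves direct sums. Composing with the fully faithful functor to $\mathbf{DM}_{\acute et}(k)$ gives the isomorphisms
\[
\bZ_{tr}(\bP(E))\cong \bigoplus_{r=0}^{d-1}\bZ_{tr}(X)(r)[2r],\qquad
\bZ_{tr}(B\ell_Z(X))\cong \bZ_{tr}(X)\oplus\bigoplus_{r=1}^{d-1}\bZ_{tr}(Z)(r)[2r]
\]
in $\mathbf{DM}_{\acute et}(k)$.

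Next I apply $\Hom_{\mathbf{DM}_{\acute et}(k)}(-,\bZ(q)[p])$. It turns finite direct sums into direct products, i.e.\ finite direct sums of abelian groups. Each summand then needs the cancellation identity
\[
\Hom(M(r)[2r],\bZ(q)[p])\cong \Hom(M,\bZ(q-r)[p-2r]).
\]
This holds in $\mathbf{DM}_{\acute et}(k)$ because tensoring with $\bZ(1)$ is invertible there by construction, and $\bZ(q)\cong\bZ(q-r)\otimes\bZ(r)$. If $q-r<0$, the group $\rH^{p-2r}(-,\bZ(q-r))$ should be read as this $\Hom$ group in $\mathbf{DM}_{\acute et}(k)$. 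That reading agrees with the convention $\bZ(q-r)=0$ after $\otimes\bQ$, but it may differ integrally, since étale negative twists can contribute torsion. I would record this as a remark on the interpretation, or restrict to the ranges of $(p,q)$ where it does not matter. Applying Theorem~\ref{thm.etalemotiviccohoDM} once more to $X$ and $Z$ gives both formulas.

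I expect the main obstacle to be the compatibility step: checking that the Tate twist $(1)$ in $\mathbf{DM}_{Nis}^{\mathrm{eff}}$ maps to the twist used to define $\bZ(q)$ in the étale category, and that the identification of Theorem~\ref{thm.etalemotiviccohoDM} is natural enough to commute with these decompositions. To handle it, I would use the definition $\bZ(1)=\mathrm{Sing}^{\bA^1}(\bZ_{tr}(\G_m^{\wedge 1}))[-1]$, which is the same formula in both settings, so sheafification carries one to the other. Together with Remark~\ref{rem.SuslinVoevodskyofcomplex}, which says $\mathrm{Sing}^{\bA^1}$ computes $\A^1$-localization for complexes, this shows that $\A^1$-localization commutes with étale sheafification, which is the compatibility needed.
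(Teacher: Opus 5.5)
Your argument is correct and is essentially the paper's proof, which simply combines Theorem~\ref{thm.etalemotiviccohoDM} with the projective bundle and blow-up formulas transported to $\mathbf{DM}_{\acute et}^{\mathrm{eff}}(k)$; you also make the cancellation step $\Hom(M(r)[2r],\bZ(q)[p])\cong\Hom(M,\bZ(q-r)[p-2r])$ explicit. Both of your caveats are justified: the first formula should have $X$ on the right-hand side, and for $q<r$ the summands must be read as $\Hom$ groups in $\mathbf{DM}_{\acute et}(k)$ rather than as $0$. For instance, for $k$ algebraically closed, $\rH^3(\bP^1,\bZ(0))\cong\rH^2_{\acute et}(\bP^1,\bQ/\bZ)\cong\bQ/\bZ$, which is accounted for by $\Hom(\bZ,\bZ(-1)[1])$, whereas $\rH^3(\Spec k,\bZ(0))=0$; these extra terms do vanish in the case $p=4$, $q=1$ used in Theorem~\ref{thm:amit4}.
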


\begin{proof}
We apply Theorem \ref{thm.etalemotiviccohoDM} and the projective bundle and blow-up formulas in $\mathbf{DM}_{\acute et}^{\mathrm{eff}}(k)$.
\end{proof}

\subsection*{Applications of motivic cohomology}
We suppose that $k$ is algebraically closed and $X$ is a smooth projective rational variety.
This implies, in particular, that the birational invariant $\Br(X)$ of $X$ vanishes.
Let a regular action of $G$ on $X$ be given.

We use scheme approximations
in order to apply a result such as Proposition \ref{prop.blowup} to $[X/G]$.
This is the construction \cite[Sect. 6.3]{EG},
used to define equivariant (higher) Chow groups.
For each $i>0$, a representation $V$ of $G$ is chosen, with invariant open $U\subset V$ and $V\setminus U$ of codimension $\ge i$, such that a $G$-principal bundle $U\to U/G$ exists in the category of smooth separated schemes of finite type over $k$.
By \cite[Prop.\ 7.1]{mumford-git}, the same holds for
\[ X\times U\to (X\times U)/G. \]
The scheme approximation
$(X\times U)/G$ has the property, that
\[ \rH^p([X/G],\Z(q))\to \rH^p((X\times U)/G,\Z(q)) \]
is an isomorphism, provided $i\ge \min((p+1)/2,q)$,
as we see by
comparing the Leray spectral sequence \eqref{eqn:spectral}
for $X$ and $X\times U$ and applying homotopy invariance and purity
\cite[Prop.\ 3.13]{KN-I}.

In the Leray spectral sequence \eqref{eqn:spectral}, we put $\mathcal F:=\bZ(1)$ and analyze the outcome. In particular, we are interested in the kernel $\ker(\varphi)$ of the natural homomorphism
$$
\varphi\colon\rH^4 ([X/G],\bZ(1)) \to \rH^4(X,\bZ(1))^G. 
$$
Considering deeper terms in the spectral sequence, we obtain an exact sequence
\begin{equation}
\label{eqn.deeper}
\rH^3(G,k^\times)\to \ker(\varphi)\to 
\rH^2(G, \Pic(X)) \stackrel{\delta_4}{\lra} \rH^4(G,k^\times),
\end{equation}
where  $\delta_4:=d_2^{22}$ is the differential of the spectral sequence. 
This allows to define the degree $4$ Amitsur group 
$$
\Am^4(X,G):=\mathrm{Im}(\delta_4) \subseteq \rH^4(G,k^\times). 
$$

\begin{theo}
\label{thm:amit4}
Let $X$ be a smooth projective rational variety, equipped with a regular action of a finite group $G$. Then 
$\Am^4(X,G)$ satisfies the following properties:
\begin{enumerate}
\item It is a stable birational invariant of the $G$-action.
\item It vanishes if $X^G\neq \emptyset$. 
\item Given a $G$-equivariant morphism $Y\to X$ of smooth projective rational
$G$-varieties one has
$$
\Am^4(X,G)\subseteq \Am^4(Y,G). 
$$
If this induces an isomorphism $\Pic(X)\cong\Pic(Y)$ then  
$$
\Am^4(X,G)=\Am^4(Y,G).
$$
\item If $V$ is a linear representation of $G$ then 
$$
\Am^4(\bP(V),G)=0. 
$$
\end{enumerate}
\end{theo}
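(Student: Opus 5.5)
The plan is to work directly with the Leray spectral sequence \eqref{eqn:spectral} for $\cF=\bZ(1)$ and its functoriality in $X$. First I identify the relevant $E_2$-terms: $E_2^{p,q}=\rH^p(G,\rH^q(X,\bZ(1)))$ with $\rH^0(X,\bZ(1))=0$, $\rH^1(X,\bZ(1))=k^\times$, $\rH^2(X,\bZ(1))=\Pic(X)$ and $\rH^3(X,\bZ(1))=\Br(X)=0$ (since $X$ is rational). The row $q=0$ vanishes and the row $q=3$ vanishes. Hence $d_2^{2,2}\colon \rH^2(G,\Pic(X))\to \rH^4(G,k^\times)$ is the only differential that can hit $E^{4,1}$ from row $2$, and $\Am^4(X,G)$ is exactly the subgroup killed in $\rH^4(G,k^\times)$ at this stage. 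The sequence \eqref{eqn.deeper} records this.

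For (3), let $f\colon Y\to X$ be equivariant. Pullback gives a morphism of spectral sequences. On the row $q=1$ it is the identity of $\rH^p(G,k^\times)$, because $k^\times=\rH^1(-,\bZ(1))$ is the constants for both varieties. On the row $q=2$ it is induced by $f^*\colon \Pic(X)\to \Pic(Y)$. Commutativity $d_2^{Y}\circ f^*=d_2^X$ then gives $\Am^4(X,G)\subseteq \Am^4(Y,G)$. If $f^*$ is an isomorphism on $\Pic$, the same map on $\rH^2(G,-)$ is an isomorphism, so the images coincide.

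For (1), I use equivariant weak factorization. It suffices to compare $X$ with the blow-up $\widetilde X=B\ell_Z X$ along a smooth invariant $Z$, and $X$ with $X\times\bP^n$ (trivial action on the second factor). In both cases there is a morphism to $X$, so (3) gives one inclusion. For the reverse inclusion, note that $\Pic(\widetilde X)=\Pic(X)\oplus P$ with $P$ the permutation module on the components of the exceptional divisor, and similarly $\Pic(X\times\bP^n)=\Pic(X)\oplus\bZ$. I then need $d_2$ to vanish on $\rH^2(G,P)$. By Shapiro's lemma, $\rH^2(G,P)=\bigoplus \rH^2(H,\bZ)$ over stabilizers $H$ of components $E_i$. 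The summand corresponding to $E_i$ is the image of $\rH^2(H,\bZ\cdot[E_i])$, and $[E_i]$ is an $H$-linearized class. Using the restriction–corestriction compatibility of the spectral sequence together with functoriality in the coefficient map $\bZ\to\Pic$ sending $1$ to a linearized class, $d_2$ of such elements factors through the spectral sequence of a $\bZ(1)$-torsor class coming from $\rH^2([X/H],\bZ(1))$, and is therefore zero. This is the step I expect to be the main obstacle. It requires making precise that a class lifting to $\Pic(X,H)=\rH^2([X/H],\bZ(1))$ forces $d_2$ to vanish on the cup-product image $\rH^2(H,\bZ)\cdot[L]$; I would check this via a multiplicative structure with $\rH^*(H,\bZ)$, i.e.\ a module structure of the spectral sequence over $\rH^*(BH,\bZ(0))$. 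Proposition \ref{prop.blowup} together with the scheme approximations justifies that the motivic $\rH^4$ groups behave as the spectral sequence predicts.

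For (2), a fixed point $x\in X^G$ gives an equivariant section $BG\to[X/G]$ of $[X/G]\to BG$. Consequently all differentials landing in the bottom nonzero row $q=1$, namely the terms $\rH^p(G,k^\times)$, vanish, because this row injects into the abutment. In particular $\delta_4=0$.

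For (4), $\bP(V)$ has a $G$-fixed point when $G$ is abelian, but not in general. Instead I use the fact that $\Pic(\bP(V))=\bZ$ with the hyperplane class $\cO(1)$, which is $G$-linearized because the action is linear. The module-structure argument from (1) then shows that $d_2$ vanishes on $\rH^2(G,\bZ)\cdot[\cO(1)]=\rH^2(G,\Pic)$.
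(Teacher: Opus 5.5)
Your proposal is correct. For (2) and (3) it coincides with the paper, which simply invokes functoriality of the Leray spectral sequence; your section $BG\to[X/G]$ is the same as pulling back along the fixed point.

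For (1) and (4) you take a genuinely different route. The paper handles a blow-up with the motivic blow-up formula (Proposition \ref{prop.blowup}), carried over to $[\widetilde{X}/G]$ by scheme approximations. This splits off $\rH^2([Z_0/H],\bZ(0))\supseteq\rH^2(H,\bZ)$ inside $\rH^4([\widetilde{X}/G],\bZ(1))$. Hence the new summand $\rH^2(H,\bZ)$ of $\rH^2(G,\Pic(\widetilde{X}))$ lies in the image of $\ker(\tilde\varphi)$, and so in $\ker(\delta_4)$.

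You get the same vanishing inside the spectral sequence. The new summand is the corestriction of $\rH^2(H,\bZ)\cdot[E_i]$. The Leibniz rule $d_2(a\cdot x)=\pm a\cdot d_2(x)$ kills it, since $d_2([E_i])=\delta_2([E_i])=0$ because $\cO(E_i)$ is $H$-linearized. This bypasses Theorem \ref{thm.etalemotiviccohoDM}, the blow-up formula and scheme approximations.

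What you need instead is multiplicativity over $\rH^*(H,\bZ)$ and compatibility with corestriction. Both are standard: $\bZ(1)\simeq\bG_m[-1]$, and the spectral sequence is the hypercohomology spectral sequence of the $G$-complex $R\Gamma(X,\bG_m)$. So the step you flag as the main obstacle is not a real one.

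Your argument also gives a more general principle: $\delta_4$ vanishes on $\rH^2(G,Q)$ for any permutation summand $Q\subseteq\Pic(X)$ whose basis classes are linearized over their stabilizers. This gives (4) in one line. The paper instead deduces (4) geometrically from (1)--(3), using the fixed point of $\bP(V\oplus 1)$ and the morphism $B\ell_0\bP(V\oplus 1)\to\bP(V)$.
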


\begin{proof}
First we address the birational invariance.    By equivariant weak factorization, we only need to consider blow-ups of smooth subvarieties.  

Let $X$ be a smooth projective variety and $Z\subset X$ a smooth subvariety of codimension $d$. Let $\widetilde{X}\to X$ be the blow-up of $X$ in $Z$. 
By Proposition \ref{prop.blowup} and the vanishing of $\bZ(q)$ when $q<0$, we have 
\begin{equation} 
\label{eqn:blow-stack}
\rH^p(\widetilde{X}, \bZ(1))=
\rH^p(X, \bZ(1)) \oplus \rH^{p-2}(Z,\bZ(0)). 
\end{equation}  
For $p=4$, the additional term takes the form
\begin{equation}
\label{eqn:z}
\rH^{2} (Z,\bZ(0))=\Hom(\pi_1(Z),\bQ/\bZ). 
\end{equation}
Same formulas hold for stacks, as can be seen via scheme approximations. 

Assume that $Z$ is a smooth $G$-subvariety of $X$, and let $Z_0$ be a component of $Z$.
We let $H$ be the subgroup of $G$ that stabilizes the component $Z_0$.
Then, $[Z/G]\cong [Z_0/H]$.
We apply the blow-up formula:
\[
\xymatrix@C=18pt{
\rH^4 ([\widetilde{X}/G],\bZ(1)) \ar[r]^{\tilde\varphi}\ar@{=}[d] & \rH^4(\widetilde{X},\bZ(1))^G\ar@{=}[d]\\
\rH^4 ([X/G],\bZ(1))  \oplus \rH^2([Z_0/H],\bZ(0)) \ar[r] & \rH^4(X,\bZ(1))^G  \oplus \rH^2(Z_0,\bZ(0))^H 
}
\]

Using the low-order terms of the Leray spectral sequence for $\cF=\bZ(0)$, we have
$$
0\to \rH^2(H,\bZ)\to \rH^2([Z_0/H],\bZ(0))\to \rH^2(Z_0,\bZ(0))^H,
$$
thus
$$
\ker(\tilde\varphi)\cong \ker(\varphi)\oplus \rH^2(H,\bZ).
$$
One the other hand,
$$
\rH^2(G,\Pic(\widetilde{X}))\cong\rH^2(G,\Pic(X))\oplus \rH^2(H,\bZ).
$$
Comparing the exact sequence \eqref{eqn.deeper} for $X$ and $\widetilde{X}$, we see that
$$
\Am^4(X,G)=\Am^4(\widetilde{X},G).
$$

We now address invariance under upon passage to the product with a projective space with trivial $G$-action. 
This uses the following formula for motivic cohomology, a trivial case of the projective bundle formula:
$$
\rH^p(X\times \bP^n, \bZ(1))=
\rH^p(X, \bZ(1)) \oplus \rH^{p-2} (X,\bZ(0)).
$$
Then we can argue as above and obtain the invariance.

Properties (2) and (3) are immediate from the functoriality of the Leray spectral sequence. Property (4) follows by considering the $G$-rational map
$$
\bP(V\oplus 1)\dashrightarrow \bP(V). 
$$
By Property (2), $\Am^4(\bP(V\oplus 1),G)=0$. 
After blowing up the origin this becomes a morphism
$$
B\ell_0\bP(V\oplus 1)\to \bP(V).
$$
By Property (1), we still have
$\Am^4(B\ell_0\bP(V\oplus 1),G)=0$; now it suffices to apply Property (3). 
\end{proof}

\begin{coro}
\label{cor:amit4}
Let $X$ be a smooth projective rational variety with regular action of a finite group $G$.
If $X$ is $G$-unirational, then $\mathrm{Am}^4(X,G)=0$.
\end{coro}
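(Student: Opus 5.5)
Since $X$ is $G$-unirational, there is a $G$-equivariant dominant rational map $\bP(V)\dashrightarrow X$ for some linear representation $V$. The plan is to resolve this into an equivariant morphism from a variety with a fixed point and then apply Theorem~\ref{thm:amit4}(3). Two preliminary reductions come first. By Theorem~\ref{thm:amit4}(1), we may replace $X$ by any stably $G$-birational smooth projective model. By Corollary~\ref{coro:no-name2} and Lemma~\ref{lemm:non-name}, we may enlarge $V$ freely, for instance replace $V$ by $V\oplus 1$, so that $\bP(V)$ has a $G$-fixed point.

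First I make the action on $\bP(V)$ generically free, so that the no-name lemma applies. If the map $\bP(V)\dashrightarrow X$ is dominant, I can precompose with a dominant $G$-rational map from a representation with faithful projectivization. Concretely, I replace $V$ by $V\oplus W$, with $W$ giving a faithful projective representation, and use the linear projection $\bP(V\oplus W)\dashrightarrow \bP(V)$, which is dominant and $G$-equivariant. I then add a trivial summary $1$, so that $P:=\bP(V\oplus W\oplus 1)$ carries the fixed point $[0:0:1]$. The composite $P\dashrightarrow X$ is still dominant and equivariant.

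Next I resolve indeterminacy equivariantly. Equivariant resolution gives a smooth projective $G$-variety $Y$ with a $G$-birational morphism $Y\to P$ that is a sequence of blow-ups in smooth invariant centers, together with a $G$-morphism $f\colon Y\to X$. By Theorem~\ref{thm:amit4}(1),(2), $\Am^4(Y,G)=\Am^4(P,G)=0$. Here $Y$ is smooth, projective and rational, so Theorem~\ref{thm:amit4}(3) gives $\Am^4(X,G)\subseteq \Am^4(Y,G)=0$.

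The main obstacle is step (3) of the theorem as used here: its proof is stated as ``immediate from functoriality'', and applied to a dominant but non-birational $f$. I would check that the Leray spectral sequences for $X$ and $Y$ with $\cF=\bZ(1)$ are compatible under $f^*$. The differential $\delta_4$ on $\rH^2(G,\Pic(X))$ composed with $f^*$ on $\rH^4(G,k^\times)$ (the identity there) equals $\delta_4$ for $Y$ applied to the image of the class under $f^*\colon \rH^2(G,\Pic(X))\to\rH^2(G,\Pic(Y))$. This shows $\mathrm{Im}(\delta_4^X)\subseteq \mathrm{Im}(\delta_4^Y)$, since $E_2^{4,0}=\rH^4(G,k^\times)$ is the same for both and the earlier differential $d_2^{2,1}$ into it is also compatible. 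One must also confirm that $\rH^1(X,\bZ(1))=k^\times$ maps isomorphically to $\rH^1(Y,\bZ(1))$, which holds for smooth projective varieties over algebraically closed $k$.
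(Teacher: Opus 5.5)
Your argument is correct and is essentially the intended one. You resolve the unirational parametrization to an equivariant morphism from a smooth projective model with vanishing $\mathrm{Am}^4$, then apply parts (1) and (3) of Theorem~\ref{thm:amit4}; adding a trivial summand to get a fixed point and invoking (2) is exactly the trick the paper uses to prove (4), so you could simply cite (4) for $\bP(V)$ and skip the detour through $W$ and the no-name lemma. One minor slip: with $\cF=\bZ(1)$ the target $\rH^4(G,k^\times)$ is $E_2^{4,1}$ and $\delta_4=d_2^{2,2}$ is itself the only relevant differential, whereas your $E_2^{4,0}$, $d_2^{2,1}$ indexing is the one for $\cF=\bG_m$.
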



\begin{exam}
\label{exa.K4P1}
One can compute $\Am^4(X,G)$, following \cite[p. 
~84]{KT-quotient}.
We carry this out, taking $G$ to be
the Klein $4$-group, acting on $\bP^1$ by $x\mapsto -x$ and $x\mapsto x^{-1}$.
The result is
\[ \Am^4(\bP^1,G)=\rH^4(G,k^\times)\cong (\bZ/2\bZ)^2. \]
For the computation we use the invariant set of divisors $\{0,\infty\}$, and we get
$$
\rH^2(G,\Pic(\bP^1))=\rH^2(G,\bZ)\cong (\bZ/2\bZ)^2,
$$
mapping isomorphically to
$\rH^3(G,\Z\cdot(0-\infty))\cong (\bZ/2\bZ)^2$, mapping isomorphically to $\rH^4(G,k^\times)$.
\end{exam}


\subsection*{Bogomolov multipliers}
Given that many cohomological invariants vanish upon existence of fixed points, and taking into account Condition {\bf (A)}, which is necessary for equivariant unirationality and (stable) linearizability of the action, it is natural to consider generalized {\em Bogomolov multipliers}
$$
\rB^j(G,M):=\Ker\left( \rH^j(G,M)\to \bigoplus_{A} 
\rH^j(A,M)\right), 
$$
where $M$ is a $G$-module and the sum is over all abelian subgroups $A\subseteq G$, see  \cite[Sect. 2]{TZ-uni}.
In search of interesting examples, we focus on $G$ and $M$ with nontrivial $\rB^j(G,M)$, for some $j$.  

We suppose that $k$ is algebraically closed.
For $j=2$ and $M=k^\times$, with trivial $G$-action, we obtain the classical Bogomolov multiplier
$$
\rB^2(G,k^\times)= \Br_{\mathrm{nr}}(k(V)^G),
$$
the unramified Brauer group of the field of invariants of a faithful representation $V$ of $G$ \cite{B-Brauer} over an algebraically closed field $k$ of characteristic zero.

We can extend \cite[Prop.\ 2]{TZ-uni} to the degree $4$ Amitsur group.

\begin{prop}
\label{prop.Am4Bogo}
Let $X$ be a smooth projective rational variety with regular action of $G$.
If $X$ satisfies Condition \emph{\textbf{(A)}}, then
\[
\mathrm{Am}^4(X,G)\subseteq \rB^4(G).
\]
\end{prop}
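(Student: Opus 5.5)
The inclusion is a statement about restriction maps: we must show that every element of $\Am^4(X,G)\subseteq \rH^4(G,k^\times)$ restricts to zero in $\rH^4(A,k^\times)$ for every abelian subgroup $A\subseteq G$. Here $\rB^4(G)$ means $\rB^4(G,k^\times)$, with trivial action on $k^\times$. The plan is to combine functoriality of the Leray spectral sequence \eqref{eqn:spectral} under restriction to subgroups with part (2) of Theorem \ref{thm:amit4}, the vanishing of $\Am^4$ in the presence of fixed points.

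First, fix an abelian subgroup $A\subseteq G$. The morphism of stacks $[X/A]\to[X/G]$ lies over $BA\to BG$, so restriction gives a morphism of Leray spectral sequences for $\cF=\bZ(1)$. These maps are compatible with the restriction maps
\[
\rH^p(G,\rH^q(X,\bZ(1)))\to \rH^p(A,\rH^q(X,\bZ(1)))
\]
on the $E_2$-pages. In particular, the differential $\delta_4=d_2^{22}$ commutes with restriction, giving a commutative square
\[
\xymatrix{
\rH^2(G,\Pic(X)) \ar[r]^{\delta_4}\ar[d]_{\mathrm{res}} & \rH^4(G,k^\times)\ar[d]^{\mathrm{res}}\\
\rH^2(A,\Pic(X)) \ar[r]^{\delta_4} & \rH^4(A,k^\times).
}
\]
It follows that $\mathrm{res}(\Am^4(X,G))\subseteq \Am^4(X,A)$.

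Second, Condition \textbf{(A)} says that $X^A\neq\emptyset$. By Theorem \ref{thm:amit4}(2), applied to the group $A$, we get $\Am^4(X,A)=0$. Hence every element of $\Am^4(X,G)$ restricts to zero in $\rH^4(A,k^\times)$ for all abelian $A$, which is exactly membership in $\rB^4(G)$.

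The only point needing care is the naturality of $d_2^{22}$ under change of group. This holds because the Leray spectral sequence is functorial for the 2-commutative square formed by $[X/A]\to[X/G]$ over $BA\to BG$; equivalently, it is the Hochschild--Serre spectral sequence, which is functorial in the group. Everything else is formal.
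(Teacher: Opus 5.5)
Your proof is correct and follows the argument the paper intends. The paper writes no proof and only says that \cite[Prop.\ 2]{TZ-uni} extends to degree $4$; that extension is exactly your argument: restrict along $[X/A]\to[X/G]$ for each abelian $A\subseteq G$, use naturality of $d_2^{22}$ to get $\mathrm{res}(\Am^4(X,G))\subseteq \Am^4(X,A)$, and apply Theorem~\ref{thm:amit4}(2) to $A$ (here $X^A\neq\emptyset$), with the naturality you flag being standard via the base change $[X/A]=[X/G]\times_{BG}BA$.
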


\section{Equivariant birational geometry -- choice of a model}
\label{sect:model}
In this section, we fix a finite group $G$ and assume that the base field $k$ contains a primitive $e$th root of unity, where $e$ is the least common multiple of the orders of the elements of $G$.

\subsection{Stabilizer stratification}
\label{sect:stab}
Let $X$ be a smooth $G$-variety, and suppose that the $G$-action is generically free.
One can 
introduce the \emph{stabilizer poset} 
\[
\mathcal{P}(X,G):=
\{ \mathrm{Stab}(\bar x)\,|\,\bar x\in X(\bar k) \}. 
\]
the set of subgroups of $G$ (ordered by inclusion) which occur as stabilizer groups of geometric points of $X$.

We are interested in the fixed locus $X^G$ of the action, i.e., the maximal closed subscheme of $X$ on which $G$ acts trivially, and more generally in the fixed loci $X^H$ for subgroups $H\subseteq G$.
These are smooth (see, e.g., \cite[Prop.\ A.8.10]{conradgabberprasad}), and may have several components, of various dimensions.
They are collected in the \emph{stabilizer stratification} \cite{BG}:
\[
\mathcal{S}(X,G):=\{\text{components of $X^H$}\,|\,H\subseteq G\}.
\]

Any $F\in \mathcal{S}(X,G)$ gives rise to two associated subgroups of $G$:
\begin{itemize}
\item The \emph{generic stabilizer group} \[ \mathrm{I}(F)\in \mathcal{P}(X,G). \]
\item The \emph{component stabilizer group}
\[ \mathrm{D}(F):=\{g\in G\,|\,F\cdot g=F\}. \]
\end{itemize}
The generic stabilizer group is a normal subgroup of the component stabilizer group:
\[ \mathrm{I}(F)\vartriangleleft \mathrm{D}(F). \]

In case $X$ is affine, $X=\Spec(A)$, with quotient variety $\Spec(B)$, $B=A^G$, and $F=\Spec(A/\mathfrak{P})$, with prime ideal $\mathfrak{P}$ over a prime ideal $\mathfrak{p}$ of $B$, these reproduce the classical inertia and decomposition groups:
\begin{itemize}
\item $\mathrm{I}(F)=\mathrm{I}(\mathfrak{P}/\mathfrak{p})$, the inertia group of $\mathfrak{P}$ over $\mathfrak{p}$.
\item $\mathrm{D}(F)=\mathrm{D}(\mathfrak{P}/\mathfrak{p})$, the decomposition group of $\mathfrak{P}$ over $\mathfrak{p}$.
\end{itemize}
The extension of residue fields
$A_{\mathfrak{P}}/\mathfrak{P}A_{\mathfrak{P}}$ of
$B_{\mathfrak{p}}/\mathfrak{p}B_{\mathfrak{p}}$ is Galois, and the Galois group is canonically identified with $\mathrm{D}(F)/\mathrm{I}(F)$.

\begin{exam}
\label{exa.notcentral}
Let the dihedral group $\mathfrak{D}_4$ of order $8$ act on affine space $\A^3=\Spec(k[x,y,z])$, where generators $\rho$ and $\sigma$ act by
\[ (x,y,z)\mapsto (-y,x,z)\qquad\text{and}\qquad (x,y,z)\mapsto (x,-y,-z), \]
respectively.
Then the subvariety $F$, defined by $x=y=0$, belongs to $\mathcal{S}(\A^3,\mathfrak{D}_8)$, with $\mathrm{I}(F)=\langle \rho\rangle$ and $\mathrm{D}(F)=\mathfrak{D}_4$.
\end{exam}

\subsection{Models}
\label{sect:standardf}
One could try to develop a theory of birational invariants of $G$-actions by analyzing the arrangement
$\cS(X,G)$
on a given model $X$. It turns out that the theory admits significant simplifications, under some assumptions on the model, which can be achieved by performing suitable equivariant blow-ups.
We comment on the nature of the simplification in Section~\ref{sect:scifi}.

Our setting is, still, a smooth $G$-variety, where the $G$-action is generically free.
One of the first simplifications is to obtain a model $X$ for which $\mathcal{P}(X,G)$ consists of {\em abelian} subgroups of $G$. Abelianization of stabilizers has been discovered and rediscovered several times;
the earliest reference we found is \cite{bogomolov}. Subsequently, the result appeared in \cite{RYessential}, \cite{Bcan}, and \cite{BG}. 
In a more modern formulation, it appears in the framework of \emph{destackification} in the theory of algebraic stacks \cite{bergh},
\cite{berghrydh}. 
The procedure, described below in Corollary \ref{cor.divisorial}, achieves several conditions:
\begin{itemize}
\item {\bf Abelian stabilizers} --
every $H\in \mathcal{P}(X,G)$ is abelian. 
\item {\bf Divisorial form} --
the action has abelian stabilizers, and
for every $H\in \mathcal{P}(X,G)$ and $F\in \mathcal{S}(X,G)$ with $\mathrm{I}(F)=H$, putting $Y:=\mathrm{D}(F)/H$, the composite
\[
\Pic(X,G)\to \rH^1(\mathrm{D}(F), k(F)^\times )
\to \rH^1(H, k(F)^\times)^Y \to H^\vee
\]
is surjective.
The leftmost map is given by restriction to the generic point of $F$,
the middle map is the restriction map of the inflation-restriction exact sequence, and the rightmost map is
$$
\rH^1(H,k(F)^\times)^Y\subseteq \rH^1(H,k(F)^\times)=\Hom(H,k(F)^\times)\cong H^\vee.
$$
\item {\bf Standard form} -- there exists $G$-invariant open $U\subset X$, with simple normal crossing boundary, such that
\begin{itemize}
\item $G$ acts freely on $U$, 
\item for every irreducible component $Z$ of the boundary $X\setminus U$, every $g\in G$ satisfies
\[
Z\cdot g \cap Z = \emptyset \qquad \text{or} \qquad Z\cdot g=Z.
\]
\end{itemize}
\end{itemize}
In \cite{BnG},
``divisorial form'' was called ``Assumption 2''; a similar divisoriality condition, with surjection from a given group of classes of orbifold line bundles of an algebraic orbifold, appeared in \cite{Bbar}.
The condition ``standard form'' was introduced in \cite{RYessential}.

\begin{exam}
\label{exa.abeliandivisorial}
If $G$ is abelian, then the
action of $G$ on $X$ is automatically in divisorial form.
Indeed, we have $G^\vee\to \Pic(X,G)$,
and the composite with the map from the definition of divisorial form is
$G^\vee\to H^\vee$ is the restriction map, Cartier dual to the inclusion $H\to G$.
This is surjective.
\end{exam}

All three conditions are preserved under equivariant blow-ups.
Among the advantages of
divisorial form are: its formulation does not require a generically free $G$-action,
so for instance we have
Example \ref{exa.abeliandivisorial} also 
without the requirement of a generically free $G$-action;
divisorial form
is stable under general equivariant morphisms,
as we may observe by the following characterization.

\begin{prop}
\label{prop.divisorialform}
Let $X$ be a smooth $G$-variety, where the $G$-action on $X$ is generically free.
Then the action is in divisorial form if and only if the action has abelian stabilizers and at every $\bar x\in X(\bar k)$ the characters of $\mathrm{Stab}(\bar x)$, determined by the elements of $\Pic(X,G)$,
generate
$\mathrm{Stab}(\bar x)^\vee$.
\end{prop}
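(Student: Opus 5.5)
The plan is to reduce both directions to a pointwise comparison. For a stratum $F\in\mathcal{S}(X,G)$ with $\mathrm{I}(F)=H$, the composite map $\Pic(X,G)\to H^\vee$ in the definition of divisorial form should coincide with the map obtained by evaluating at a geometric point $\bar x$ of $F$ whose stabilizer is exactly $H$. At such a point, a $G$-linearized line bundle $L$ gives a character of $\mathrm{Stab}(\bar x)=H$ through the action on the fiber $L_{\bar x}$.

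\emph{Step 1: the two characters agree.} Let $L$ be $G$-linearized. Restrict it to a $\mathrm{D}(F)$-invariant neighborhood of the generic point of $F$. Over $F$, the subgroup $H$ acts fiberwise on $L|_F$, and it does so by a single character $\chi_L$, since $F$ is irreducible and characters of a finite group are locally constant. Choose a local trivializing section of $L$ at the generic point. Its cocycle restricted to $H$ is the map $H\to k(F)^\times$, $h\mapsto s\cdot h/s$, and this equals the constant $\chi_L(h)$. This is exactly the image of $[L]$ under the composite map to $\rH^1(H,k(F)^\times)=\Hom(H,k(F)^\times)$. It lands in the constants, which are identified with $H^\vee$ via the roots of unity in $k$. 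Specializing to any $\bar x\in F$ with $\mathrm{Stab}(\bar x)=H$ gives the same character on $L_{\bar x}$. So divisorial form at $F$ is equivalent to the requirement that the characters from $\Pic(X,G)$ generate $H^\vee$ at the generic points of the strata.

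\emph{Step 2: generic points versus all points.} Every $\bar x\in X(\bar k)$ lies in the stratum $F$, a component of $X^{\mathrm{Stab}(\bar x)}$ through $\bar x$, and for that stratum $\mathrm{I}(F)\supseteq \mathrm{Stab}(\bar x)$. Moreover $\bar x$ is fixed by $\mathrm{I}(F)$, so $\mathrm{I}(F)\subseteq\mathrm{Stab}(\bar x)$. Hence $\mathrm{I}(F)=\mathrm{Stab}(\bar x)$. Since the character of $\mathrm{Stab}(\bar x)$ on $L_{\bar x}$ is constant along $F$, the condition at $\bar x$ is the condition at $F$. Conversely, every stratum arises this way, because its generic geometric point has stabilizer $\mathrm{I}(F)$. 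The abelian-stabilizer hypothesis appears identically on both sides, so the two conditions match and the equivalence follows.

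\emph{Main obstacle.} The delicate step is Step 1: matching the inflation–restriction map to $\rH^1(H,k(F)^\times)$, including its $Y$-invariance and the identification $\Hom(H,k(F)^\times)\cong H^\vee$, with the naive fiber character. I will handle it by making the sign and left/right conventions explicit. In particular I will check that the constancy of $h\mapsto s\cdot h/s$ on $F$ follows from $H$ acting trivially on $F$. The point is that $s\cdot h/s$ is a unit near the generic point of $F$ whose restriction to $F$ is a root of unity, and this is where the hypothesis that $k$ contains the $e$th roots of unity enters.
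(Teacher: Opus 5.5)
Your proof is correct and follows essentially the same route as the paper: both identify the composite $\Pic(X,G)\to H^\vee$ with the character by which $H=\mathrm{Stab}(\bar x)$ acts on the fiber at $\bar x$, using that $H$ acts trivially on the component $F$ of $X^H$ through $\bar x$. The paper makes this comparison by factoring through $\rH^1(H,\cO_{F,x}^\times)$ rather than through constancy of the fiber character on the connected $F$, and it leaves implicit what you spell out, namely $\mathrm{I}(F)=\mathrm{Stab}(\bar x)$ and that every stratum contains a point whose stabilizer is exactly $\mathrm{I}(F)$.
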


\begin{proof}
Given $\bar x\in X(\bar k)$, over a closed point $x\in X$ we let $H=\mathrm{Stab}(\bar x)$ and $F\in \mathcal{S}(X,G)$ be the unique component of $X^H$ containing $x$.
The action of $G$ on $X$ restricts to the trivial action of $H$ on $F$, and in particular, on the local ring $\cO_{F,x}$.
The composite map in the definition of divisorial form factors through $\rH^1(H,\cO_{F,x}^\times)$ and agrees with the composite with
\[ \rH^1(H,\cO_{F,x}^\times)\to \rH^1(H,\bar k^\times)\cong H^\vee. \]
So the condition from the definition of divisorial form, for $F$ as above, is equivalent to generation of $\mathrm{Stab}(\bar x)$ by elements coming from $\Pic(X,G)$.
\end{proof}

\begin{rema}
\label{rem.finmanylinebundles}
Given a collection of linearized line bundles $\{L_i\}_{i\in I}$ on $X$, we can replace $\Pic(X,G)$ by the subgroup generated by the classes of the $L_i$ in the definition of divisorial form, to get a
variant, \emph{divisorial form with respect to $\{L_i\}_{i\in I}$}.
This notion appears, e.g., in \cite[Sect.\ 5]{KT-struct},
and Proposition \ref{prop.divisorialform}
remains valid for this notion and subgroup.
If $X$ is in divisorial form, then $X$ is in
divisorial form with respect to some $\{L_i\}_{i\in I}$ with $I$ \emph{finite}.
This is observed in \cite[Remark 3.2]{BnG}, which also points out the following stack-theoretic formulation ($I$ finite):
$X$ is in divisorial form with respect to $\{L_i\}_{i\in I}$
if and only if
the associated morphism of stacks
\begin{equation}
\label{eqn.toproductBGm}
[X/G]\to \prod_{i\in I} B\G_m,
\end{equation}
is \emph{representable} (i.e., induces monomorphisms on geometric stabilizers).
\end{rema}

\begin{exam}
\label{exa.notdivisorial}
Divisorial form has abelian stabilizers, but not every action with abelian stabilizers is in divisorial form:
consider the 
projectivization $X$ of the $3$-dimensional irreducible representation of the alternating group $G:=\fA_4$.
There are points with stabilizer $H$ of order $4$.
We compute $\Pic(X,G)$ using exact sequence
\eqref{eqn.BrXmodG}:
the torsion is $\Hom(\fA_4,k^\times)\cong \Z/3\Z$, the free part is spanned by the class of $\cO_X(1)$ with its natural linearization.
There is no surjective homomorphism from
$\Pic([X/G])\cong \Z\oplus \Z/3\Z$
to
$H^\vee\cong (\Z/2\Z)^2$.
\end{exam}

\begin{exam}
\label{exa.notstandardform}
Standard form is divisorial, but not every divisorial action is in standard form:
consider $X:=\bP^3$ with action of
$G:=(\Z/2\Z)^4$, where respective generators send $(x:y:z:w)$ to
\[ (x:y:-z:-w),\quad
(z:w:x:y),\quad
(x:-y:z:-w),\quad
(y:x:w:z). \]
Every subgroup of $G$ of order $2$ appears as generic stabilizer along curves in $X$.
Since $G$ is abelian, the action is in divisorial form.
With
\[ Q:\ x^2+y^2+z^2+w^2=0, \]
we get $\Pic(X,G)=\langle[Q]\rangle\oplus G^\vee$, by \eqref{eqn.BrXmodG}.
For
a subgroup $H\subseteq G$, $|H|=2$, the homomorphism $\Pic(X,G)\to H^\vee$ (definition of divisorial form) maps $[Q]$ to $1\in H^\vee$ if $C\subset Q$, otherwise to $0$, for a curve $C\in \mathcal{S}(X,G)$ with $\mathrm{I}(C)=H$, and on $G^\vee$ is given by restriction.
Such $C$ can be contained in at most two boundary components of a standard form.
But exhaustive checking reveals: any subset $T\subseteq \Pic(X,G)$, generating $H^\vee$ for every subgroup $H\subseteq G$, $|H|=2$, has to have at least $3$ elements mapping to $1\in H^\vee$ for some subgroup $H\subseteq G$, $|H|=2$.
\end{exam}

\begin{prop}
\label{prop.divisorialcentral}
Let $G$ be a finite group with generically free action on a smooth variety $X$.
If the action is in divisorial form, then
$\mathrm{I}(F)$ lies in the center of $\mathrm{D}(F)$, for every
$F\in \mathcal{S}(X,G)$.
\end{prop}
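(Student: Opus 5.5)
We want to show that in divisorial form, for $F\in\mathcal{S}(X,G)$ with $H=\mathrm{I}(F)$ and $D=\mathrm{D}(F)$, every $d\in D$ commutes with every $h\in H$. The plan is to use the characters of $H$ coming from $\Pic(X,G)$. By definition these are $D$-invariant, since they factor through $\rH^1(H,k(F)^\times)^Y$. Divisorial form says these characters generate $H^\vee$. So every character of $H$ is invariant under the conjugation action of $D$. Since $H$ is abelian and $k$ contains enough roots of unity, characters separate points of $H$. Conjugation by $d$ then preserves each element of $H$, i.e.\ $dhd^{-1}=h$.

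\textbf{Step 1: the image is $D$-invariant.} First I will check directly why the image of $\Pic(X,G)\to H^\vee$ consists of $D$-invariant characters, independent of the inflation-restriction formalism. Take a $G$-linearized line bundle $L$ and a point $x\in F$ fixed by $H$ (generic in $F$). The fiber $L_x$ carries a character $\chi$ of $H$. For $d\in D$, the point $x\cdot d$ is again in $F$, and its fiber $L_{x\cdot d}$ carries the character $h\mapsto \chi(d h d^{-1})$ (or its inverse conjugate, depending on conventions), transported by the linearization. Since $F$ is irreducible and $H$ acts trivially on it, the character of $H$ on the fibers of $L|_F$ is locally constant, hence constant along $F$. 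Equating the two descriptions gives $\chi(dhd^{-1})=\chi(h)$ for all $h\in H$ and $d\in D$. This is also exactly the content of the $Y$-invariance in the map $\rH^1(H,k(F)^\times)^Y\to H^\vee$, where the identification $\rH^1(H,k(F)^\times)=\Hom(H,k(F)^\times)$ is $D$-equivariant.

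\textbf{Step 2: conclude centrality.} By Step 1, for each $d\in D$ the element $h^{-1}d h d^{-1}$ lies in the kernel of every character in the image. Because the image generates $H^\vee$, this element lies in the kernel of all of $H^\vee$. Here $H$ is abelian, being a stabilizer in divisorial form, and $k$ contains the $e$th roots of unity. So $H^\vee=\Hom(H,k^\times)$ separates points, and $dhd^{-1}=h$. Thus $H$ is central in $D$.

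The main obstacle is Step 1: getting the conventions right for the right action, and justifying that the fiber character is constant on $F$. For the constancy I would use the argument in the proof of Proposition \ref{prop.divisorialform}, namely reduction to $\rH^1(H,\cO_{F,x}^\times)\to \rH^1(H,\bar k^\times)$. Alternatively, I would appeal directly to the restriction map landing in $Y$-invariants, which is built into the definition of divisorial form.
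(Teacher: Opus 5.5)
Your proposal is correct and follows essentially the same route as the paper: the composite $\Pic(X,G)\to \rH^1(H,k(F)^\times)^Y\to H^\vee$ is surjective, so $Y=\mathrm{D}(F)/\mathrm{I}(F)$ acts trivially on $H^\vee$, and since that action is by conjugation, $\mathrm{I}(F)$ is central in $\mathrm{D}(F)$. Your fiber-wise check of $\mathrm{D}(F)$-invariance and your explicit use of the roots of unity in $k$, so that characters separate the points of $H$, just spell out details the paper leaves implicit.
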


\begin{proof}
With the notation of the definition of divisorial form, the action of $Y$ on $\rH^1(H,k(F)^\times)$ has to be trivial.
Since the action is given by conjugation,
$\mathrm{I}(F)$ is a central subgroup of $\mathrm{D}(F)$.
\end{proof}

\begin{exam}
\label{exa.morenotdivisorial}
In Example \ref{exa.notcentral},
$\mathrm{I}(F)$ does not lie in the center of $\mathrm{D}(F)$.
With this linear action of $\mathfrak{D}_4$ on $\A^3$, or its projective compactification $\bP^3$, we have additional examples of actions with abelian stabilizers, not in divisorial form.
\end{exam}

\begin{prop}
\label{prop.divisorial}
Let a generically free action of a finite group $G$ on a smooth algebraic variety $X$ be given, together with a simple normal crossing divisor
\[ D=D_1\cup\dots\cup D_\ell, \]
on $X$ such that each $D_i$ is smooth and $G$-invariant.
Given $\bar x\in X(\bar k)$ over $x\in X$, we define
\[ H'\subseteq H:=\mathrm{Stab}(\bar x) \]
to be the intersection of the kernels of the characters determined by the $G$-linearized line bundles $\cO_X(D_i)$ for all $i$ with $x\in D_i$, and let $d(\bar x)$ denote the dimension of the nontrivial part of the representation of $H'$ on the tangent space $\cT_{X,\bar x}$.
Then the function $d$ is identically zero if and only if the action is in standard form with boundary $D$.
If the maximal value $m$ of $d$ is positive, then
\[ \{\bar x\in X(\bar k)\,|\, d(\bar x)=m\} \]
is the set of $\bar k$-points of a smooth $G$-invariant closed subscheme $W$ of pure codimension $m$, meeting $D$ transversally, such that the blow-up $\widetilde{X}$ of $X$ along $W$ has a simple normal crossing divisor formed by the exceptional divisor and the pre-images of the $D_i$, and $d(\bar y)<m$ for all $\bar y\in \widetilde{X}(\bar k)$.
\end{prop}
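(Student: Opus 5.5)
The proof is local: the function $d$ is read off from linear algebra at points. We then use equivariant linearization at $\bar x$ to exhibit the locus $\{d=m\}$ as a fixed locus.

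\emph{Setup.} Fix $\bar x\in X(\bar k)$ with stabilizer $H$. Choose étale-local (or formal) coordinates at $\bar x$ on which $H$ acts diagonally by characters; this is possible since $H$ is finite, $\mathrm{char}(k)=0$, and $k$ contains the roots of unity. The divisors $D_i$ through $x$ are $G$-invariant and meet with simple normal crossings, so we may take them to be coordinate hyperplanes $t_1=0,\dots,t_s=0$. The character of $H$ on the fiber of $\cO_X(D_i)$ at $\bar x$ is then (the inverse of) the weight of $t_i$. Consequently $H'$ is the common kernel of the weights of $t_1,\dots,t_s$, and $d(\bar x)$ counts the coordinates $t_j$, $j>s$, on which $H'$ acts nontrivially.

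\emph{Case $d\equiv 0$.} If $d\equiv 0$, then $H'$ acts trivially on the tangent space. Since the action is generically free and $X$ is smooth, a finite group acting trivially on the tangent space at a fixed point acts trivially on a neighborhood, so $H'=1$. Hence $H$ embeds into the characters of the $D_i$ through $x$. Freeness on $U=X\setminus D$ follows, since points off $D$ then have trivial stabilizer. The condition $Z\cdot g\cap Z=\emptyset$ or $Z\cdot g=Z$ holds trivially, because each $D_i$ is $G$-invariant. Conversely, in standard form the stabilizer at $\bar x$ acts faithfully on the normal directions to the boundary components through $x$. I would deduce this from the local structure: the boundary components are invariant, and $H$ acts on $U$ freely. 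An element acting trivially on all the $t_i$, $i\le s$, fixes the locus $t_1=\dots=t_s=0$ pointwise near $x$; this locus meets $U$ in nearby points unless $s=n$, and would then force that element to be trivial, so $H'=1$ and $d=0$. I expect this converse to need a little care, and I would argue it via the local form of the stratum.

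\emph{Structure of $W$.} Let $m=\max d>0$. I claim that near $\bar x$ with $d(\bar x)=m$ the locus $\{d=m\}$ equals the fixed locus $X^{H'}$ intersected with the stratum of $D$ containing $x$, more precisely its component through $x$. Let $\bar y$ be a nearby point of $X^{H'}$ lying on exactly the same $D_i$. Its stabilizer $H_y\subseteq H$ contains $H'$, and its group $H'_y$ contains $H'$ because the characters are the same. Upper semicontinuity of $d$, together with maximality, then gives $d(\bar y)=m$. Conversely, points with $d=m$ near $x$ must have $H'_y\supseteq H'$: if $H'_y$ were smaller, the dimension $d$ would drop, since $H'_y$ acts on the same linear model. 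Hence $W$ is locally the linear subspace $\{t_j=0:\ j>s,\ \text{weight of } t_j \text{ nontrivial on } H'\}$. It is smooth of codimension $m$ and transverse to $D$, since it is cut out by coordinates disjoint from $t_1,\dots,t_s$. It is $G$-invariant because $d$ is $G$-invariant. That $W$ is closed follows from maximality combined with semicontinuity.

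\emph{Blow-up.} In the blow-up along $W$, work on the chart where some $t_j$ with $j>s$ and $H'$-nontrivial weight becomes the exceptional equation $e$. Its weight $\chi$ is nontrivial on $H'$. For a point $\bar y$ over $\bar x$, the new group $H'_{\bar y}$ lies in $H'\cap\ker\chi$, which is strictly smaller than $H'$. The remaining normal directions $t_k/t_j$ carry weights differing by $\chi$, and $d(\bar y)$ counts those nontrivial on $H'_{\bar y}$. The $m$ directions have been reduced: $e$ is now a boundary direction, so at most $m-1$ directions remain nontrivial. Hence $d(\bar y)<m$. The simple normal crossings property is standard, since $W$ is transverse to $D$.

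\emph{Main obstacle.} The identification of $\{d=m\}$ with the linear fixed subspace is the step that needs the most care, in particular the semicontinuity argument that $H'_{\bar y}\supseteq H'$ along $W$.
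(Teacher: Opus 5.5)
Your strategy is the same as the paper's. You reduce to an $H$-equivariant étale-local linear model $V=\chi_1\oplus\dots\oplus\chi_\ell\oplus V'$ with the $D_i$ as coordinate hyperplanes. You identify $\{d=m\}$ locally with the fixed locus of $H'$, which is the paper's $W=\chi_1\oplus\dots\oplus\chi_\ell\oplus(V')^{H'}$. After the blow-up, the exceptional direction becomes a boundary direction, so the $H'_{\bar y}$-nontrivial part lies in the $(m-1)$-dimensional tangent space of $\bP(V'')$. Your blow-up step and your proof that $d\equiv 0$ gives standard form are fine; note that the component condition on $Z$ uses that each $D_i$ is smooth, so its components are disjoint, not merely that it is invariant.

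Your argument for the converse, however, uses the wrong locus. The set $\{t_1=\dots=t_s=0\}$ is the stratum $D_1\cap\dots\cap D_s$. It lies in $D$, so it never meets $U$ when $s\ge 1$. Also, $h\in H'$ need not fix it pointwise, since $h$ acts there through the weights of $t_{s+1},\dots,t_n$. What works is the complementary slice. Since $h$ acts trivially on $t_1,\dots,t_s$, its fixed locus in the model contains all points with $t_{s+1}=\dots=t_n=0$. Those with $t_1\cdots t_s\ne 0$ lie in $U$ near $x$, so freeness on $U$ gives $h=1$, with no exceptional case $s=n$. The paper simply invokes here that standard form satisfies the generation condition of Proposition \ref{prop.divisorialform} with the boundary line bundles.

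Second, your opening claim that $\{d=m\}$ is locally $X^{H'}$ intersected with the stratum of $x$ is false. That set has codimension $m+s$ and lies inside $D$ when $x\in D$. This contradicts both the statement and your own later, correct formula $W=\{t_j=0:\ j>s,\ \text{weight of } t_j \text{ nontrivial on } H'\}$. Because you only treat points of $X^{H'}$ lying on exactly the same $D_i$, you never show that all of $X^{H'}$ near $x$ has $d=m$. Moreover, the inequality $d(\bar y)\ge m$ does not follow from upper semicontinuity, which, like maximality, only gives $d(\bar y)\le m$.

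The fix is as follows. For any $\bar y\in X^{H'}$ near $\bar x$, the $D_i$ through $y$ are among those through $x$. The restrictions to $H'$ of their characters are locally constant along $X^{H'}$, hence trivial, so $H'\subseteq H'_{\bar y}$. Since $\cT_{X,\bar y}\cong \cT_{X,\bar x}$ as $H'$-representations, this gives $d(\bar y)\ge m$.

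For the reverse inclusion, make your vague ``$d$ would drop'' precise, as the paper does. In the model, $H'_{\bar v}=H_{\bar v}\cap H'$, because the stabilizer lies in $\ker\chi_i$ whenever $t_i(\bar v)\ne 0$. Hence $d(\bar v)=\dim V-\dim V^{H'_{\bar v}}\le m$. Equality gives $\bar v\in V^{H'_{\bar v}}=V^{H'}$. With these corrections your proof goes through.
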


\begin{proof}
We start by proving the first assertion.
For an action in standard form with boundary $D$, the generation condition of Proposition \ref{prop.divisorialform} is satisfied at $\bar x\in X(\bar k)$
over $x\in X$ with just the $G$-linearized line bundles $\cO_X(D_i)$ with $x\in D_i$,
thus $d(\bar x)=0$.
The converse follows from the observation, that for $x$ in the complement of $D$, the vanishing of $d(\bar x)$ implies the triviality of $H$.

For the remaining assertions, the condition to have geometric stabilizer group contained in $H$ defines $H$-invariant open $X^\circ\subset X$, with $x\in X^\circ$.
At any $\bar k$-point of $X^\circ$ the value of the function $d$ stays the same when we replace $G$ by $H$.
The function $d$ also remains unchanged under passage to an equivariant \'etale cover that induces bijections on geometric stabilizer groups.
So we are reduced to checking the remaining assertions under the assumption that
$G=H$ and
$x$ is the origin in a linear representation $X=V$ for some $G\to \GL(V^\vee)$,
\[ V=\chi_1\oplus\dots\oplus \chi_\ell\oplus V', \]
direct sum of $\ell$ one-dimensional representations of $G$
and a final factor $V'$, with
the $D_i$ given by the first $\ell$ factors.

The subgroup $H'$ is
the intersection of the kernels of $\chi_1$, $\dots$, $\chi_\ell$.
We write
\[ V'=(V')^{H'}\oplus V'' \]
for some subrepresentation $V''$.
Now $m=\dim(V'')$, with
$d(\bar v)\le m$ for all $\bar v\in V(\bar k)$ and equality if and only if $\bar v$ projects to $0\in V''$.
So
\[ W=\chi_1\oplus \dots\oplus \chi_\ell \oplus (V')^{H'}. \]
Blowing up $W$ in $V$ replaces $V''$ by
$B\ell_0(V'')$.
At a $\bar k$-point $\bar y$ of the exceptional divisor,
the nontrivial part of the representation on $\mathcal{T}_{B\ell_WV,\bar y}$ of the intersection of kernels of characters of $\mathrm{Stab}(\bar y)$ is contained in the tangent space to the image of $\bar y$ in $\bP(V'')$.
This has dimension $m-1$.
\end{proof}

\begin{coro}[\cite{berghrydh}]
\label{cor.divisorial}
A smooth algebraic $G$-variety, where the $G$-action is generically free, can be brought into standard form via a sequence of blow-ups at $G$-invariant smooth centers.
\end{coro}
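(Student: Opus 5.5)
The plan is to iterate Proposition \ref{prop.divisorial}, starting from the empty boundary $D=\emptyset$ and inducting on the maximal value of $d$. With $D=\emptyset$, at a geometric point $\bar x$ the subgroup $H'$ is the whole stabilizer $H=\mathrm{Stab}(\bar x)$. So $d(\bar x)$ is the dimension of the nontrivial part of $\cT_{X,\bar x}$ as an $H$-representation, which is at most $\dim X$. Hence $d$ takes only finitely many values; by quasi-compactness and constructibility of the stabilizer stratification $d$ attains a maximum $m$.

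At each stage, with current snc boundary $D=D_1\cup\dots\cup D_\ell$, each $D_i$ smooth and $G$-invariant, we proceed as follows.
\begin{itemize}
\item If $m=0$, the first assertion of Proposition \ref{prop.divisorial} says the action is in standard form with boundary $D$, and we stop.
\item If $m>0$, the proposition supplies a smooth $G$-invariant closed subscheme $W$ of pure codimension $m$, transverse to $D$. We blow up along $W$ and take as new boundary the exceptional divisor $E$ together with the strict transforms of the $D_i$. The proposition guarantees this is again simple normal crossing and that the new function $\tilde d$ satisfies $\tilde d<m$ everywhere.
\end{itemize}
The blow-up $\widetilde X$ is smooth with generically free $G$-action, because the map is an isomorphism over the dense open complement of $W$ (note $m>0$). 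So the hypotheses of the proposition hold again. Since $m$ strictly decreases and is a nonnegative integer, the process terminates after at most $\dim X+1$ steps in a model in standard form.

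The one point to check is that the new boundary components are individually smooth and $G$-invariant, as Proposition \ref{prop.divisorial} requires, rather than merely forming a $G$-invariant divisor. The strict transforms of the $D_i$ are the blow-ups of $D_i$ along $D_i\cap W$, which is smooth by transversality, and they are $G$-invariant since $D_i$ and $W$ are. For $E$, I will declare each connected component of $E$, equivalently of $W$, a separate boundary component only when it is $G$-invariant. If $G$ permutes components of $W$, I will instead take $E$ itself as a single smooth, possibly disconnected, $G$-invariant divisor $D_{\ell+1}$. I expect the main obstacle to be making sure the proposition, and the definition of $H'$ via the linearized bundles $\cO_X(D_i)$, tolerates disconnected $D_i$. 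Locally at $\bar x$ only the component through $x$ matters, so the computation in the proof of Proposition \ref{prop.divisorial} is unchanged. With that settled, the disjointness condition $Z\cdot g\cap Z=\emptyset$ or $Z\cdot g=Z$ in the definition of standard form holds automatically for the irreducible components of each smooth $D_i$.
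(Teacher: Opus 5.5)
Your proposal is correct and is the paper's argument: start from the empty divisor, repeatedly blow up the locus $W$ from Proposition~\ref{prop.divisorial}, and note that the maximum of $d$ strictly drops each time until $d\equiv 0$, which is standard form by the first assertion of that proposition. Your extra bookkeeping is harmless. The proposition already treats the possibly disconnected exceptional divisor as a single smooth $G$-invariant boundary component. Since $H'$ is computed locally, splitting that divisor into $G$-invariant unions of its connected components would not change $d$.
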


\begin{proof}
Starting with the empty divisor, we repeatedly blow up the locus indicated in Proposition \ref{prop.divisorial}, until the function $d$ is identically zero.
\end{proof}

\begin{rema}
\label{rem.divisorial}
The function $d$ of Proposition \ref{prop.divisorial} depends only on the closed point $x\in X$ and may be extended to arbitrary points of $X$, by replacing the tangent space at $\bar x$ by the dual of the fiber at $x$ of the conormal sheaf of the closure of $\{x\}$.
Then we recover the \emph{divisorial index} of \cite{berghrydh}.
Corollary \ref{cor.divisorial} records the outcome of the \emph{divisorialification} algorithm of \cite[Thm.\ A]{berghrydh}.
\end{rema}

\section{Equivariant Burnside groups -- definitions}
\label{sect:defi}
Consider a smooth projective $G$-variety $X$ of dimension $n$, where the base field $k$ is assumed to contain enough roots of unity; the precise requirement for ``enough'' is stated at the beginning of Section~\ref{sect:model}. 
We assume that the $G$-action is generically free.
Let $\bar x$ be a $\bar k$-point of $X$, with abelian geometric stabilizer group $H\subseteq G$.
The equivalence class of the induced representation of $H$ on the tangent space $\cT_{X,\bar x}$ may be encoded by a sequence
$$
\beta(\bar x):=(b_1,\ldots, b_n) 
$$
of characters of $H$.
If $\bar x$ is an isolated point of $X^H$, then all of the characters will be nontrivial. 
In general, the multiplicity of the zero character will be the
codimension in $X$ of the component of $X^H$ containing $\bar x$.
The characters $b_1$, $\dots$, $b_n$ will generate the character group $H^\vee$.

We formulate, first, the groups of symbols that encode these actions.
At a $\bar k$-point $\bar x\in X$
with abelian geometric stabilizer group $H$, we get a sequence of characters of $H$, that is defined up to order.
At another point in the $G$-orbit of $\bar x$ the geometric stabilizer group $H\subseteq G$ gets replaced by a conjugate subgroup $H':=gHg^{-1}$,
and the characters get replaced accordingly by their $g$-conjugates.
The corresponding symbol groups are subject to order and conjugation relations.
In a next step, blow-up relations are introduced; only after introducing these relations do we obtain groups, where the equivariant birational invariants take their values.

\subsection{Symbols}
\label{ss.symbols}

Let $H$ be a finite abelian group, with character group
\[ A:=H^\vee. \]
For $n\in \N$, we introduce
\[ \cS_n(H), \]
the abelian group generated by \emph{symbols}, which take the form
of an $n$-tuple of characters
\[ \beta=(b_1,\dots,b_n),\qquad b_1,\dots,b_n\in A, \]
such that $\langle b_1,\dots,b_n\rangle=A$.
These are subject to the relation 

\noindent
\textbf{(O)} (order)
$\beta=(b_1,\dots,b_n)$ is equal to $\beta'=(b'_1,\dots,b'_n)$
if there exists a permutation $\sigma\in \fS_n$,
with $b'_i=b_{\sigma(i)}$ for $i=1$, $\dots$, $n$.

Notice that we allow $0$ to appear as a character in $\beta$.

\medskip

Let $G$ be a finite group.
The \emph{combinatorial symbols group}
\[ \mathcal{SC}_n(G) \]
is generated by \emph{symbols}
\[ (H,Y,\beta) \]
where
\begin{itemize}
\item 
$H\subseteq G$ is an abelian subgroup,
\item $Y$ is a subgroup of $Z_G(H)/H$ (with $Z_G(H)$ the centralizer of $H$ in $G$),
and
\item $\beta$ is a sequence of characters in $H^\vee$, generating $H^\vee$, of length at most $n$.
\end{itemize}
These are subject to relations 

\noindent
\textbf{(O)} $(H,Y,\beta)=(H,Y,\beta')$ if $\beta'$ is a reordering of $\beta$, 

\noindent
\textbf{(C)} (conjugation) For $g\in G$ we have $(H,Y,\beta)=(H',Y',\beta')$, where $H'=gHg^{-1}$,
$Y'=gYg^{-1}$, and by conjugation by $g$ we obtain the characters in $\beta'$ from those in $\beta$.

\medskip

Lastly, we introduce the abelian group
\[ \mathrm{Symb}_n(G), \]
where the generators are \emph{symbols}
\[
(H, Y\actsfromleft K, \beta).
\]
Here,
\begin{itemize}
\item $H\subseteq G$ is an abelian subgroup,
\item $Y\subseteq Z_G(H)/H$ is a subgroup,
\item $\beta$ is a sequence of characters in $H^\vee$, generating $H^\vee$,
\item $K$ is a finitely generated extension field of $k$, with faithful action over $k$ by $Y$, such that, if we denote by $Y_0$ the pre-image of $Y$ in $Z_G(H)$ (by the canonical homomorphism), the restriction map
\begin{equation}
\label{eqn:su}
\rH^1(Y_0,K^\times)\to \rH^1(H,K^\times)\cong H^\vee
\end{equation}
is surjective, and
\item denoting by $d$ the transcendence degree of $K$ over $k$,
the length of the sequence of characters $\beta$ is $n-d$.
\end{itemize}
These symbols are subject to relations 

\noindent
\textbf{(O)} $(H, Y\actsfromleft K, \beta)=(H, Y\actsfromleft K, \beta')$ if $\beta'$ is a reordering of $\beta$,

\noindent
\textbf{(C)} For $g\in G$ we have $(H, Y\actsfromleft K, \beta)=(H', Y'\actsfromleft K', \beta')$, where
$$
H'=gHg^{-1}, \quad Y'=gYg^{-1},
$$
$K'$ is isomorphic as a $k$-algebra to $K$ in a manner that is compatible with the
respective actions,
and $\beta'$ obtained from $\beta$ by conjugation by $g$.

\begin{rema}
\label{rem.injective}
The surjectivity condition \eqref{eqn:su} here was called ``Assumption 1'' in \cite{BnG}.
The map \eqref{eqn:su} is extraced from the
inflation-restriction exact sequence of group cohomology.
This has, further to the left, the term
$\rH^1(Y,K^\times)$, which vanishes by Hilbert's Theorem 90.
Consequently, surjectivity of \eqref{eqn:su} is equivalent
to having here an isomorphism.
\end{rema}

\subsection{Relations}
\label{ss.relations}
We define relations, mirroring the impact on the weights when a smooth $G$-variety when $X$ is blown up along a smooth invariant $G$-subvariety.
It suffices to consider the case of a blow-up in codimension $2$ (this will be explained later, in the proof of Theorem \ref{thm.biratinvarclass}); this is reflected in the form of the relations.

\

We consider the quotient 
$$
\cS_n(H) \to \cB_n(H)
$$
by the relation:

\

\textbf{(B)} (blow-up) For $\beta=(b_1,\dots,b_n)$, $n\ge 2$,
\[ 
\beta=\begin{cases}
(0,b_2,\dots,b_n),&
\text{if $b_1=b_2$}, \\
\beta_1+\beta_2,&
\text{if $b_1\ne b_2$},
\end{cases}
\]
where
\[
\beta_1:=(b_1-b_2,b_2,b_3,\dots,b_n),\qquad
\beta_2:=(b_1,b_2-b_1,b_3,\dots,b_n).
\]

\medskip

Similarly, we consider the quotient
$$
\mathcal{SC}_n(G) \to \mathcal{BC}_n(G)
$$
by the relations

\noindent
\textbf{(V)} (vanishing) $(H,Y,\beta)=0$ whenever $b_i=0$ for some $i$, 

\noindent
\textbf{(B)} if the sequence of characters $\beta=(b_1,b_2,\dots)$ has length $\ge 2$, then
$$
(H,Y,\beta)=(H,Y,\beta_1)+(H,Y,\beta_2)+(\overline{H},\overline{Y},\bar\beta),
$$
where $\beta_1:=(b_1-b_2,b_2,\dots)$, $\beta_2:=(b_1,b_2-b_1,\dots)$, $\overline{H}:=\ker(b_1-b_2)$, $\bar\beta:=(\bar b_2,\bar b_3,\dots)$ (restrictions of characters), and $\overline{Y}:=Y_0/\overline{H}$,
with $Y_0$ the pre-image of $Y$ in $Z_G(H)$, as before.

\medskip

Finally, we consider the quotient 
$$
\mathrm{Symb}_n(G) \to \Burn_n(G)
$$
by the relations 

\noindent
\textbf{(V)} $(H,Y\actsfromleft K,\beta)=0$ whenever $b_i=0$ for some $i$, 

\noindent
\textbf{(B)} for a symbol $(H,Y\actsfromleft K,\beta)$ with $n-d\ge 2$
(where $d$ denotes the transcendence degree of $K$ over $k$),
\begin{align}
\label{eqn:v}
(H,Y\actsfromleft K&,\beta)= \notag \\
&(H,Y\actsfromleft K,\beta_1)+(H,Y\actsfromleft K,\beta_2)+(\overline{H},\overline{Y}\actsfromleft \overline{K},\bar\beta),
\end{align}
with notation as above and additionally
\[
\overline{K}:=K(t),
\]
where the following recipe is carried out to produce an action of $\overline{Y}$ on $\overline{K}$.
Consider $b:=b_1-b_2$ as a primitive character of $H/\overline{H}$.
By the surjectivity of \eqref{eqn:su} and Remark \ref{rem.injective}, $b$ admits a unique lift to $\rH^1(Y_0,K^\times)$.
A choice of cocycle representation leads to a $Y_0$-action on $K(t)$,
where $\overline{H}$ acts trivially, and we get an induced action of $\overline{Y}$; see 
\cite[Sect.\ 2]{BnG}.

\begin{rema}
\label{rem.earliersymbols}
The formulation of relations \textbf{(V)} and \textbf{(B)} follows \cite[Sect.\ 2]{KT-struct}.
Earlier papers such as \cite{BnG} and \cite{KT-vector} allowed only symbols with sequences of \emph{nonzero} characters, generating $H^\vee$, with relations \textbf{(B1)}--\textbf{(B2)}, that are however equivalent to the formulation with more general symbols and relations \textbf{(V)} and \textbf{(B)}.
An additional, conventional difference in the present formulation is the requirement for $K$ to be a \emph{field}, rather than a finite product of fields, as in \cite{BnG} (formulation with Galois algebras for the group $N_G(H)/H$), or \cite{KT-struct} (with Galois algebras for subgroups of $N_G(H)/H$);
here, $N_G(H)$ denotes the normalizer of $H$ in $G$.
\end{rema}

\subsection{Class of the action} 
\label{sect:class}
Given a good model for the $G$-action, i.e., one that is in \emph{divisorial form}, as in Section~\ref{sect:standardf}, we define its class in $\cB_n$, $\BC_n$, respectively, $\Burn_n$ as follows: 
\begin{itemize}
\item For $G$ {\em abelian}, we let $\cF(X,G)$ be the set of components of the fixed locus
\[ X^G=\bigsqcup_{F\in \cF(X,G)} F \]
and, for each, record the characters of the $G$-action 
$$
\beta_{F}(X)=(b_1,\ldots, b_n), \quad b_j\in G^\vee,
$$   
in the tangent bundle at a geometric point of $F$. The class 
\[
[X\actsfromright G]:=\sum_{F\in \cF(X,G)} [k':k]\beta_{F}(X)\in \cB_n(G)
\]
with $k'$ the algebraic closure of $k$ in $k(F)$,
yields a well-defined $G$-equivariant birational invariant \cite[Thm.\ 3]{KPT}.
\item 
For arbitrary $G$, we consider the stabilizer stratification $\cS(X,G)$ introduced in Section~\ref{sect:stab}; recall that all stabilizers are abelian.  
For every component $F\in \cS(X,G)$
of dimension $d$ we record a symbol
$$
(H, Y, \beta)\qquad\text{resp.}\qquad
(H, Y\actsfromleft k(F), \beta), 
$$
where $H=\mathrm{I}(F)$ (an abelian subgroup of $G$), and the symbol records
$Y=\mathrm{D}(F)/H\subseteq Z_G(H)/H$ (containment by Proposition \ref{prop.divisorialcentral}),
respectively $Y$ with residual action,
and
$\beta=\beta_F(X)=(b_1,\ldots, b_{n-d})$ is the sequence of characters of $H$, for the action of $H$ 
in the normal bundle to $F$ in $X$ at the generic point of $F$. We record only one such symbol for the $G$-orbit of $F$. In particular, $H$, $Y$, and $\beta$ are defined only up to conjugation (reflecting the passage to a different component in the $G$-orbit of $F$); furthermore, the sequence $\beta$ is only defined up to order.
The class is
$$
[X\actsfromright G] :=\sum_{F\in \cS(X,G)/G} [k':k](H, Y, \beta_F(X))\in\BC_n(G),
$$
with $k'$ as before the algebraic closure of $k$ in $k(F)$, respectively,
$$
[X\actsfromright G] :=\sum_{F\in \cS(X,G)/G}  (H, Y\actsfromleft k(F), \beta_F(X))\in\Burn_n(G),
$$
where the sum runs over $G$-orbit representatives of the stabilizer stratification.
\end{itemize}

\begin{rema}
\label{rem.classofaction}
In parallel to the conventional differences in the definition of symbols, mentioned in Remark \ref{rem.earliersymbols}, there are alternative expressions for $[X\actsfromright G]$.
In \cite{BnG}, $[X\actsfromright G]\in \Burn_n(G)$ is expressed as a sum over conjugacy class representatives $H$ of abelian subgroups of $G$, of sums of symbols $\sum_F \mathfrak{s}_F$, where $F$ in the inner sum runs over $N_G(H)$-orbits of elements of $\mathcal{S}(X,G)$ with generic stabilizer $H$.
In the analogous expression in \cite{KT-vector}, the inner sum is over $Z_G(H)$-orbits, but this comes at the cost of a more complicated outer sum, requiring the notion of $\Pic(X,G)$-pairing on an abelian subgroup of $G$; an advantage of the shift to $Z_G(H)$-orbits was a simpler formulation of the blow-up relations.
The closest to the present formulation is \cite[p.~3027]{KT-toric}, an expression as a sum over points $x_0\in X/G$ (in the scheme sense, i.e., not just closed points).
When we have the generic point of some $F\in \mathcal{S}(X,G)/G$ over $x_0$, the contribution is $(H,Y\actsfromleft k(F),\beta_F(X))$.
Otherwise, the contribution is a symbol with $0$ in the sequence of characters, and this vanishes by relation \textbf{(V)}.
More details are provided in \cite[Sect.\ 2]{KT-struct}.
\end{rema}

\begin{theo}
\label{thm.biratinvarclass}
The recipe, for a projective $n$-dimensional $G$-variety with generically free $G$-action, to choose a smooth projective model $X$ in divisorial form and 
associate the element
\[ [X\actsfromright G], \]
defines a $G$-equivariant birational invariant class in $\BC_n(G)$, respectively in $\Burn_n(G)$.
\end{theo}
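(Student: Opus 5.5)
We reduce the statement, via equivariant weak factorization, to a local computation for a single blow-up. Divisorial form exists on some smooth projective model: apply equivariant desingularization and regular models (Section~\ref{sect:basic1}), then Corollary~\ref{cor.divisorial}, noting that standard form implies divisorial form. Now let $X$ and $X'$ be two smooth projective models in divisorial form, related by a proper $G$-birational map. By \cite{abramovichtemkin} they are connected by a chain of blow-ups and blow-downs along smooth invariant centers. The intermediate models need not be in divisorial form. To fix this, we use that the divisorialification of Proposition~\ref{prop.divisorial} is functorial for smooth morphisms. We can then dominate each intermediate model by a divisorial one, in a manner compatible with the blow-ups (as in \cite{BnG}, or by applying the algorithm of \cite{berghrydh} to the whole factorization diagram). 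Hence it suffices to prove the following: if $\pi\colon\widetilde X=B\ell_Z X\to X$ is the blow-up of a smooth $G$-invariant $Z$, with both $X$ and $\widetilde X$ in divisorial form, then $[\widetilde X\actsfromright G]=[X\actsfromright G]$. We may take $Z$ to be a single $G$-orbit of components.

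The comparison is organized over points of $X/G$, as in Remark~\ref{rem.classofaction}. Strata $F\in\mathcal S(X,G)$ not contained in $Z$ have strict transforms with the same generic stabilizer, the same $\mathrm D(F)$, the same function field and the same normal characters, so their contributions agree. What remains is to compare, for each stratum $F\subseteq Z$, the contribution $(H,Y\actsfromleft k(F),\beta_F(X))$ with the sum of contributions of the strata of $\widetilde X$ lying over the generic point of $F$.

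Over that generic point, $\pi^{-1}$ is the projectivization $\bP(N)$ of the normal bundle of $Z$. The normal bundle splits into $H$-isotypic pieces by the characters of $\beta_F(X)$ that are normal to $Z$. We first treat the case $\operatorname{codim} Z=2$ with normal characters $b_1,b_2$. If $b_1\ne b_2$, the new strata are three: the two sections of $\bP^1$-bundle given by the eigenlines, with characters $\beta_1$ and $\beta_2$ and the same $Y\actsfromleft k(F)$; and the generic point of the exceptional divisor over $F$, with stabilizer $\ker(b_1-b_2)=\overline H$ and function field $k(F)(t)$. In the last case the action of $\overline Y$ on $k(F)(t)$ is twisted by the cocycle lifting $b_1-b_2$; this lift exists by divisorial form and the surjectivity~\eqref{eqn:su}. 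This is exactly relation \textbf{(B)}. If $b_1=b_2$, then $H$ acts trivially on the fibre, and the new stratum has a $0$ character; it matches the case $b_1=b_2$ of \textbf{(B)} together with \textbf{(V)}. In $\BC_n$ we take the same computation and record only $[k':k]$. Strata with a trivial normal character contribute $0$ by \textbf{(V)}, which takes care of the points of $Z$ that are not generic in a stratum.

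The case of higher codimension $c$ we reduce to codimension~$2$. The argument is that $\bP(N)$ decomposes according to isotypic components, and its invariant cells can be compared iteratively. Concretely, the plan is to show the following identity in $\Burn_n(G)$: the contributions of the strata of $\bP(\chi_1\oplus\dots\oplus\chi_c)$ over $F$, each with one additional character, sum to $(H,Y\actsfromleft K,(\chi_1,\dots,\chi_c,\dots))$. We prove this by induction on $c$, applying relation \textbf{(B)} to pairs of characters. This is the step I expect to be the main obstacle. One route is purely symbolic, following \cite{KPT}. Another is geometric: realize the blow-up of a codimension-$c$ center as the composite of codimension-$2$ blow-ups followed by blow-downs, using weak factorization again on the local linear model $\A^c$ with its torus-compatible $H$-action. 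In that local model all intermediate spaces are toric, hence in divisorial form (Example~\ref{exa.abeliandivisorial}, applied to $H$ together with the torus). It also requires care to ensure that the twisted function fields $\overline K$ appearing in the process are identified correctly. Here we use Remark~\ref{rem.injective}: the lifts of characters to $\rH^1(Y_0,K^\times)$ are unique, so the resulting actions are well defined up to the conjugation relation \textbf{(C)}.
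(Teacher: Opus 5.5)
Your overall route is the paper's. Equivariant weak factorization reduces to a single blow-up along a smooth invariant center. The codimension-$2$ case is literally relation \textbf{(B)} together with \textbf{(V)}. Higher codimension rests on the relation $(H,Y\actsfromleft K,\beta)=\sum_{\emptyset\ne I\subseteq\{1,\dots,c\}}(H_I,Y_I\actsfromleft K_I,\beta_I)$, indexed by the torus orbits of $\bP(\chi_1\oplus\dots\oplus\chi_c)$.

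The step you flag as the main obstacle is exactly the one the paper does not reprove but imports from \cite[Prop.~4.7(ii)]{BnG}, and your toric route does work. Put $v=e_1+\dots+e_{c-1}$ and $w=v+e_c$. The star subdivision of $\langle e_1,\dots,e_c\rangle$ at $w$ is obtained in three moves: subdivide the face $\langle e_1,\dots,e_{c-1}\rangle$ at $v$; subdivide the $2$-cone $\langle v,e_c\rangle$ at $w$; then undo the subdivision at $v$ of the face $\langle e_1,\dots,e_{c-1}\rangle$ of $\langle e_1,\dots,e_{c-1},w\rangle$. By induction on $c$ this is a chain of $2$-dimensional star subdivisions and their inverses. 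Each of these changes the sum of orbit symbols by one instance of \textbf{(B)}.

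The step that does not hold up as written is your treatment of intermediate models. Functoriality of divisorialification for smooth morphisms says nothing about blow-ups. If you replace $V_i$ and $V_{i+1}=B\ell_C V_i$ by divisorial models $\widehat V_i$ and $\widehat V_{i+1}$, the induced map $\widehat V_i\dashrightarrow\widehat V_{i+1}$ is again an arbitrary birational map between divisorial models, which is the problem you started with.

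In fact no repair is needed. Weak factorization can be taken with an index $i_0$ such that $V_i$ maps equivariantly to $X$ for $i\le i_0$ and to $X'$ for $i\ge i_0$; this is part of the standard statement. Divisorial form pulls back along any equivariant morphism by Proposition~\ref{prop.divisorialform}: stabilizers can only shrink, and restriction of characters to a subgroup is surjective. Equivalently, $[V_i/G]\to[X/G]\to\prod B\G_m$ stays representable. So every intermediate model is already in divisorial form, which is why the paper stresses that divisorial form is stable under equivariant morphisms.

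Two smaller corrections in your codimension-$2$ analysis.

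First, when $b_1=b_2$ the new stratum $\pi^{-1}(F)$ does not carry a $0$ character. It has generic stabilizer $H$, normal characters $(b_1,b_3,\dots)$, and function field $K(t)$ with untwisted $Y$-action, which is the third term of \textbf{(B)}. It is $\beta_1$ and $\beta_2$ that are killed by \textbf{(V)}.

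Second, at points of $Z$ that are not generic in a stratum, \textbf{(V)} alone does not dispose of what lies above them. At a point $x\in Z$ where the stabilizer has normal characters $(0,b)$ on $N_{Z/X}$, the blow-up can acquire over $x$ a stratum with characters $(b,-b,\dots)$. This symbol has no zero entry. It vanishes only because \textbf{(B)}, applied to the zero symbol $(\dots,(0,b,\dots))$ at $x$, forces it to. So you should apply \textbf{(B)} at every point of $Z/G$, not only at generic points of strata; the sum-over-points formalism of Remark~\ref{rem.classofaction} accommodates this directly.
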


We recall, a smooth projective model in divisorial form may be obtained by performing equivariant resolution of singularities, followed by the sequence of blow-ups given by Corollary \ref{cor.divisorial}.

\begin{proof}
It needs to be checked that two different smooth projective models in divisorial form give rise to the same class in $\BC_n(G)$, respectively in $\Burn_n(G)$.
This is established in \cite[Thm.\ 5.1]{BnG} by using
equivariant weak factorization to reduce to the case of a blow-up of a smooth $G$-subvariety of codimension $j\ge 2$, and relations in $\Burn_n(G)$
of the form
\[
(H,Y\actsfromleft K,\beta)=
\sum_{\emptyset\ne I\subset \{1,\dots,j\}} 
(H_I,Y_I\actsfromleft K_I,\beta_I),
\]
provided $\mathrm{trdeg}_k(K)\le n-j$
(or analogous relations in $\BC_n(G)$);
a key point is that these relations are shown, in \cite[Prop.\ 4.7(ii)]{BnG}, to follow from relations \textbf{(B)} and \textbf{(V)}.
Here,
\[ H_I:=\bigcap_{i,i'\in I}\ker(b_i-b_{i'}) \]
and, writing $Y=Y_0/H$ with $Y_0\subseteq Z_G(H)$,
\[ Y_I:=Y_0/H_I. \]
We construct an action of $Y_I$ on the purely transcendental extension
$K_I$ of $K$, $\mathrm{trdeg}_K(K_I)=|I|-1$,
in an analogous fashion to
$\overline{Y}\actsfromleft \overline{K}$ in \eqref{eqn:v}; this is the {\em action construction} from \cite[Sect.\ 2]{BnG}, see also \cite[Sect.\ 2]{KT-vector}. 
All $b_i$ for $i\in I$ have a common class $\bar b\in (H_I)^\vee$; then,
\[ \bar \beta:=(\bar b,\bar b_{i_1},\dots, \bar b_{i_r}), \]
with $\{1,\dots,n-d\}\setminus I=\{i_1,\dots,i_r\}$.
\end{proof}

Algorithms for the computation of the class $[X\actsfromright G]\in \Burn_n(G)$ for actions arising from (projectivizations of) linear representations and actions on smooth projective toric varieties can be found in \cite{KT-vector} and \cite{KT-toric}.

\subsection{Class of an open subvariety}
\label{ss.opensubvariety}
In applications, we also need to consider a $G$-invariant open $U$ in a smooth projective $G$-variety $X$.
We suppose that the $G$-action on $X$ is generically free, and $X$ is in divisorial form.
There are \emph{two} ways to define the class of $U\actsfromright G$ (in any flavor of Burnside group).
The most naive way to do this is to copy the formula from Section \ref{sect:class}, replacing $X$ by $U$:
\begin{align*}
[U\actsfromright G]^{\mathrm{naive}}&:=\sum_{F\in \cF(U,G)} [k':k]\beta_{F}(U)\in \cB_n(G), \\
[U\actsfromright G]^{\mathrm{naive}}&:=\sum_{F\in \cS(U,G)/G} [k':k](H, Y, \beta_F(U))\in\BC_n(G), \\
[U\actsfromright G]^{\mathrm{naive}}&:=\sum_{F\in \cS(U,G)/G}  (H, Y\actsfromleft k(F), \beta_F(U))\in\Burn_n(G).
\end{align*}
These classes are birational invariants of $U\actsfromright G$, by \cite[Lemma 5.3]{BnG}.

However, motivated by the form of the specialization map, even in its original, nonequivariant form \cite{KT},
we also need a more sophisticated formula with an alternating sum over boundary strata.
For this we make the further assumption, that $X\setminus U$ is a simple normal crossing divisor \[ D=D_1\cup\dots \cup D_\ell, \]
such that each $D_i$ is $G$-invariant.
For $\emptyset\ne I\subseteq \cI:=\{1,\dots,\ell\}$ we have $D_I;=\bigcap_{i\in I}D_i$,
with normal bundle $\mathcal{N}_{D_I/X}$
and \emph{punctured normal bundle} $\mathcal{N}_{D_I/X}^\circ$.
The latter is defined by removing, from $\mathcal{N}_{D_I/X}\cong \bigoplus_{i\in I}\mathcal{N}_{D_i/X}|_{D_I}$,
the fiber over $D_{I\cup\{j\}}$, for all $j\in \cI\setminus I$, as well as the zero-section of $\mathcal{N}_{D_i/X}|_{D_I}$, for all $i\in I$.
Then in any flavor of Burnside group we have
\begin{align*}
[&U\actsfromright G]:=
[X\actsfromright G]+\sum_{\emptyset\ne I\subseteq \cI}(-1)^{|I|}[\mathcal{N}_{D_I/X}\actsfromright G]^{\mathrm{naive}} \\
&\qquad=
[U\actsfromright G]^{\mathrm{naive}}+\sum_{\emptyset\ne I\subseteq \cI}(-1)^{|I|}[\mathcal{N}_{D_I/X}^\circ\actsfromright G]^{\mathrm{naive}}.
\end{align*}
The definition is \cite[Defn.\ 5.4]{BnG}, and the equality is \cite[Lemma 5.7]{BnG}.
This class is a birational invariant of $U\actsfromright G$, by \cite[Thm.\ 5.15]{BnG}.

\subsection{Comparisons}
\label{ss.comparisons}
The different versions of Burnside groups are related to each other, via forgetful homomorphisms. The homomorphism
$$
\Burn_n(G)\to \BC_n(G)
$$
forgets the birational type of the stratum $F$ in the symbol \cite[Prop.\ 8.2]{KT-struct}: 
$$
(H, Y\actsfromleft k(F), \beta) \mapsto [k':k](H, Y, \beta).
$$
For $G$ abelian, we 
have a further homomorphism \cite[Exa.\ 8.8]{KT-struct}
$$
\BC_n(G)\to \cB_n(G), 
$$
where we quotient by the subgroup generated by symbols with $H\subsetneq G$. 
A more refined formalism of intermediate quotients, with respect to natural filtrations on $\Burn_n(G)$ is described in Section~\ref{sect:filtration}.

\section{Equivariant Burnside groups -- properties}
\label{sect:first}
We continue to assume that $k$ has enough roots of unity, as in Sections \ref{sect:model} and \ref{sect:defi}.

\subsection{Vanishing in stable range}
\label{sect:vanishing}

In the analysis of relations in the various Burnside groups, we often use the following combinatorial consequence of the blow-up relation \cite[Prop.\ 4.7(i)]{BnG}: 
a symbol
$$
(H, Y\actsfromleft K, (b_1,\ldots, b_{n-d})) 
$$
vanishes in $\Burn_n(G)$, if $\sum_{i\in I}b_i=0$ for some $\emptyset\ne I\subseteq \{1,\dots,n-d\}$.
This implies the vanishing of any symbol of the form
\[ (H, Y\actsfromleft K(t_1,\dots,t_m), (b_1,\ldots, b_{n-d})) \]
where $m$ is at least one less than the minimum of the orders of the $b_i$ \cite[Prop.\ 4.1]{KT-toric}.
As a consequence, if $X$ is a smooth projective $n$-dimensional variety with generically free $G$-action, and we consider $X\times \bP^m$, with trivial $G$-action on $\bP^m$, then
$$
[X\times \bP^m\actsfromright G]=(1, G\actsfromleft k(X)(t_1,\ldots, t_m), ())\in \Burn_{n+m}(G),
$$
provided $m\ge -1+\max_{g\in G}|\langle g\rangle|$ \cite[Rmk.\ 4.2]{KT-toric}. 
(More generally, if $X$ is a smooth projective $G$-variety and $X_0$ an irreducible component of $X$, then this holds with $(1,\mathrm{D}(X_0)\actsfromleft k(X_0)(t_1,\dots,t_m),())$ on the right.)

\subsection{Filtrations}
\label{sect:filtration}
There is the possibility to restrict attention just to certain
stabilizer groups.
This can be done systematically using the notion of filter,
introduced in \cite[\S 3]{KT-struct}.
A \emph{$G$-filter} is a subset $\mathbf{H}$ of the set of pairs $(H,Y)$, consisting of an abelian subgroup $H\subseteq G$ and a subgroup $Y\subseteq Z_G(H)/H$,
that is closed under conjugation and satisfies the following additional property.
For $(H,Y)\in \mathbf{H}$, $H\ne 1$, and $g\in Z_G(H)$, with
$\bar g\in Y$ and $Y\subseteq Z_G(g)/H$, we require
$(\langle H,g\rangle,Y/\langle \bar g\rangle)\in \mathbf{H}$.
Then
$\Burn_n^{\mathbf{H}}(G)$
is defined to be the quotient of $\Burn_n(G)$ by the subgroup generated by all symbols $(H,Y\actsfromleft K,\beta)$ with $(H,Y)\notin \mathbf{H}$.
A similar definition is made for $\BC_n(G)$.
The properties of a filter guarantee \cite[Prop.\ 3.3, Prop.\ 8.7]{KT-struct} that
$\Burn_n^{\mathbf{H}}(G)$ is the abelian group,
generated by symbols $(H,Y\actsfromleft K,\beta)$ with $(H,Y)\in \mathbf{H}$, subject to relations
\textbf{(O)}, \textbf{(C)}, \textbf{(V)}, \textbf{(B)}, applied just to these symbols,
and the analogous fact holds for $\BC_n^{\mathbf{H}}(G)$.

There is also a comparison homomorphism between the Burnside and combinatorial Burnside groups (\S \ref{ss.comparisons}) in the presence of a filter $\mathbf{H}$, and this is compatible with the quotient maps $\Burn_n(G)\to \Burn_n^{\mathbf{H}}(G)$ and
$\BC_n(G)\to \BC_n^{\mathbf{H}}(G)$.

For instance, when $G$ is abelian we define (cf.\ \cite[\S 8]{BnG})
\[ \Burn_n^G(G):=\Burn_n^{\{(G,1)\}}(G),\qquad
\BC_n^G(G):=\BC_n^{\{(G,1)\}}(G).
\]
By \cite[Exa.\ 8.8]{KT-struct}, we have
\[ \BC_n^G(G)=\cB_n(G). \]

\subsection{Incompressibles}
\label{sect:incomp}
The analysis of the blow-up relation, and in particular, the determination of the vanishing of a symbol in $\Burn_n(G)$ can be tricky.
However, in many applications, $G$-actions can be distinguished upon projection to the subgroup, freely generated by {\em incompressible} symbols. These are divisor symbols, that is, symbols 
$$
(H,Y\actsfromleft K,(b))
$$ 
with $\mathrm{trdeg}_k(K)=n-1$ and nontrivial cyclic $H$, $H^\vee=\langle b\rangle$, that cannot be obtained, via the action construction, as rightmost term in a relation \eqref{eqn:v}, coming from relation \textbf{(B)}.
The notion appears in \cite[Defn.\ 3.3]{KT-vector}.

Among classical precursors of this notion are fixed curves of birational involutions on rational surfaces, leading to the classification of involutions as Bertini, Geisser, and
de Jonqui\`eres; cf.\ \cite{baylebeauville}. A more recent version of this is the {\em normalized fixed curve with action (NFCA)} invariant, introduced by Blanc \cite{blancsubgroups}.
This takes into account the stabilizer and the residual action, and gives a finer invariant for cyclic groups of nonprime order.

\subsection{Specialization}
Let $\mathfrak{o}$ be a complete DVR with residue field $k$,
and let $K$ be the fraction field of $\mathfrak{o}$.
Fix a uniformizer $t\in \mathfrak{o}$;
so, $\mathfrak{o}\cong k[[t]]$.
We proceed to describe the  specialization map
\[ \rho_t^G\colon \Burn_{n,K}(G)\to \Burn_n(G). \]
The definition works with symbols in $\Burn_{n,K}(G)$, but the motivation is the specialization of $[X\actsfromright G]\in \Burn_{n,K}(G)$, where $X$ is a smooth projective $G$-variety over $K$ with generically free $G$-action.

We take $\cX$ to be a regular model, projective over $\mathfrak{o}$, such that the special fiber is a simple normal crossing divisor $D=D_1\cup\dots\cup D_\ell$, where the $G$-action extends to $\cX$, each $D_i$ is $G$-invariant, and there is $d_i\in \mathbb{N}$, common multiplicity of the components of $D_i$ in the special fiber.
The set-up reminds us of Section \ref{ss.opensubvariety}, and we have similarly the normal bundle $\mathcal{N}_{D_I/\cX}$ and punctured normal bundle $\mathcal{N}_{D_I/\cX}^\circ$ for $\emptyset\ne I\subseteq \cI:=\{1,\dots,\ell\}$.
We have a map, given by the canonical section of a twist of $\cO_{D_I}$, and isomorphism,
supplied by $t$:
\[
\bigotimes_{i\in I} \mathcal{N}_{D_i/\cX}^{\otimes d_i}|_{D_I}\to 
\biggl(\bigotimes_{i\in I} \mathcal{N}_{D_i/\cX}^{\otimes d_i}|_{D_I}\biggr)\otimes
\cO_{D_I}\Bigl(\sum_{j\in \cI\setminus I}d_jD_{I\cup\{j\}}\Bigr)\cong \cO_{D_I}.
\]
We define the regular function $\omega_I$ on $\mathcal{N}_{D_I/\cX}$ via the natural identification with $\bigoplus_{i\in I}\mathcal{N}_{D_i/X}|_{D_I}$,
to be the composite with the morphism
\[
\bigoplus_{i\in I}\mathcal{N}_{D_i/X}|_{D_I}\to
\bigotimes_{i\in I} \mathcal{N}_{D_i/\cX}^{\otimes d_i}|_{D_I},\qquad
(s_i)_{i\in I}\mapsto \prod_{i\in I}s_i^{d_i}.
\]
The locus of vanishing of $\omega_I$ is the union of the fibers over $D_{I\cup\{j\}}$ for $j\in \cI\setminus I$ and the zero-sections of the $\mathcal{N}_{D_i/X}|_{D_I}$.
Thus
\[
\omega_I^{-1}(\A^1\setminus\{0\})=\mathcal{N}_{D_I/\cX}^\circ.
\]

The specialization map is defined as follows.
Given a symbol
\[ (H,Y\actsfromleft K(W),\beta) \]
with $W$ a smooth projective $G$-variety over $K$, we take
$\mathcal{W}$ to be a regular model of $W$
as above.
So the $G$-action extends to
$\mathcal{W}$, projective over $\mathfrak{o}$,
with simple normal crossing special fiber
$D_1\cup\dots\cup D_\ell$, such that each $D_i$ is $G$-invariant with a multiplicity $d_i$.
There is the regular function $\omega_I$ on $\mathcal{N}_{D_I/\cW}$, as above.
Then
\[
\rho_\pi^G((H,Y\actsfromleft K(W),\beta)):=\sum_{\emptyset\ne I\subseteq \cI}(-1)^{|I|-1}(H,Y\actsfromleft k(\omega_I^{-1}(1)),\beta),
\]
which is well-defined, i.e.,
independent of the choice of $\cW$ \cite[Sect.\ 6]{BnG}.

We return to our motivation, to understand the image of $[X\actsfromright G]$ under $\rho_t^G$.
This was addressed in \cite[Thm.\ 6.6]{BnG}, with a formula as an explicit sum of symbols.
We state this in a conceptually simpler form.
\begin{theo}
\label{thm.revisitThm6.6}
Let $X$ be a smooth projective $G$-variety over $K$ with generically free $G$-action, and let $\cX$ be a regular model to which the action extends, such that the special fiber is a simple normal crossing divisor $D=D_1\cup \dots\cup D_\ell$, where each $D_i$ is $G$-invariant with a multiplicity $d_i$.
Then with $\omega_I$ as above we have
\[
\rho_t^G([X\actsfromright G])=
\sum_{\emptyset\ne I\subseteq \cI}
(-1)^{|I|-1}[\omega_I^{-1}(1)\actsfromright G]^{\mathrm{naive}}.
\]
\end{theo}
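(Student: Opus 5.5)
We first arrange that $X$ is in divisorial form over $K$. By Corollary \ref{cor.divisorial} this holds after invariant blow-ups, and these extend to the model $\cX$: we blow up closures in $\cX$ and then resolve, keeping the special fiber simple normal crossing with invariant components. Both sides are unchanged by such modifications. For the left side this is Theorem \ref{thm.biratinvarclass}. For the right side it follows from the independence of the model in the definition of $\rho_t^G$ \cite[Sect.\ 6]{BnG}, once we know the right side agrees with a sum of specializations of symbols. We then expand the left side as
\[
[X\actsfromright G]=\sum_{F\in\cS(X,G)/G}(H,Y\actsfromleft K(F),\beta_F(X)).
\]
For each stratum $F$, take its closure $\cF$ in $\cX$. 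We refine $\cX$ further, by equivariant blow-ups in the special fiber, so that every such $\cF$ is a regular model of $F$. Its special fiber should be the simple normal crossing divisor $\bigcup_i (D_i\cap\cF)$, with components $D_i\cap \cF$ of the same multiplicity $d_i$. This transversality of stabilizer strata to the boundary strata is what we aim for, for instance via the standard form of Proposition \ref{prop.divisorial} applied to the pair $(\cX,D)$.

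Next apply the definition of $\rho_t^G$ symbol by symbol, using $\cW=\cF$:
\[
\rho_t^G([X\actsfromright G])=\sum_F\sum_{\emptyset\ne I}(-1)^{|I|-1}(H,Y\actsfromleft k(\omega_{I,F}^{-1}(1)),\beta_F(X)).
\]
Here $\omega_{I,F}$ is the function on $\cN_{D_I\cap\cF/\cF}$ defined by the recipe above. Transversality gives an equivariant identification $\cN_{D_I\cap \cF/\cF}=\cN_{D_I/\cX}|_{D_I\cap\cF}$, under which $\omega_{I,F}$ becomes the restriction of $\omega_I$. It remains to compare with $[\omega_I^{-1}(1)\actsfromright G]^{\mathrm{naive}}$, i.e.\ to identify the stabilizer stratification of the smooth $G$-variety $\omega_I^{-1}(1)$ together with its normal weights.

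The main point is the following claim. The strata of $\omega_I^{-1}(1)$ are exactly the components of $\omega_I^{-1}(1)\cap \cN_{D_I/\cX}|_{D_I\cap\cF}$ for $F\in\cS(X,G)$. Each such component has generic stabilizer $H$ and decomposition group compatible with $Y$, and its normal representation is $\beta_F(X)$.

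To prove the claim, look at a point of the punctured normal bundle lying over $D_I\cap \cF$. The stabilizer $H$ acts on the fiber coordinates $s_i$ ($i\in I$) via characters. It preserves the locus $\prod s_i^{d_i}=1$, so it acts trivially on that locus at points where it fixes the coordinates. Hence the $H$-fixed locus of $\omega_I^{-1}(1)$ is the part lying over $\cF\cap D_I$. Its normal bundle in $\omega_I^{-1}(1)$ is the pullback of the normal bundle of $\cF\cap D_I$ in $D_I$. By transversality this is the restriction of $\cN_{\cF/\cX}$, whose $H$-weights are $\beta_F(X)$, since the weights are locally constant in the family.

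The subtle part is strata of $\omega_I^{-1}(1)$ with stabilizers not coming from the generic fiber. These are points of $D_I$ whose stabilizer is larger, with the extra elements acting nontrivially on the $s_i$. Such elements act by roots of unity and cannot fix points with $\prod s_i^{d_i}=1$ unless their weights are compatible with the $d_i$. We must either show that these contributions match the generic-fiber strata, or that they carry a zero character and so vanish by \textbf{(V)}. I expect this bookkeeping to be the main obstacle. I would first try to handle it with the divisorial-form surjectivity \eqref{eqn:su}, using that $t$ is $G$-invariant. The residual actions $Y\actsfromleft k(\cdot)$ should then match by construction, and summing over $F$ and $I$ gives the formula.
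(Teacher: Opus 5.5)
Your outline matches the final computation in the paper: expand $[X\actsfromright G]$ over $\cS(X,G)/G$, specialize each symbol using the closure $\cW$ of its stratum, and match the result with the strata of $\omega_I^{-1}(1)$. The gap is that the step you postpone as ``bookkeeping'' is the real content of the theorem. The tools you propose for it (vanishing by \textbf{(V)}, the surjectivity condition on symbols, compatibility of weights with the $d_i$) are also not what makes it work, because there are no extra strata to cancel.

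The missing observation is the following. A point $(p,s)\in\mathcal{N}^\circ_{D_I/\cX}$ has $p$ on no $D_j$ with $j\notin I$, and every coordinate $s_i\ne 0$ for $i\in I$. Write $\chi_i$ for the character of $\mathrm{Stab}(p)$ on the normal line $\mathcal{N}_{D_i/\cX,p}$. Then $h$ fixes $(p,s)$ if and only if $h$ fixes $p$ and $\chi_i(h)=1$ for every $i\in I$. Invariance of $t$ makes $\prod_i\chi_i^{d_i}=1$ automatic, so that product condition carries no information. Now let $H=\mathrm{Stab}(p,s)$. Since taking invariants is exact, $(T_p\cX)^H$ surjects onto $\bigoplus_{i\in I}\mathcal{N}_{D_i/\cX,p}$. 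Hence the component of $\cX^H$ through $p$ is transverse to the special fiber and not contained in it, so it is the closure $\cW$ of a component $W$ of $X^H$. The normal weights are constant along $\cW$, and a dimension count shows the generic stabilizers agree.

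The paper proves exactly this in packaged form, by deformation to the normal bundle. Via the smooth morphism $[X_A/G]\to[\A^\ell/\cK]$, the deformation space restricted to $\tilde\omega_I\ne 0$ is smooth over $(\A^1\setminus\{0\})\times\A^1$, with $G$ acting fiberwise. So fixed loci are smooth over the base, and every stratum of $\mathcal{N}^\circ_{D_I/\cX}$ spreads out to a stratum meeting $X$, with the same weights. Smoothness of $\omega_I$ on $\mathcal{N}^\circ_{D_I/\cX}$ then transfers the strata to $\omega_I^{-1}(1)$.

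Your reductions are also problematic. Blowing up closures in $\cX$ and resolving, or blowing up further in the special fiber, changes the components and multiplicities of the special fiber, and hence the right-hand side. You justify the invariance of the right side by its agreeing ``with a sum of specializations of symbols'', but that presupposes the theorem, so the argument is circular.

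The paper instead applies relative divisorialification to $[X_A/G]\to[\A^\ell/\G_m^\ell]$. Its centers are smooth over $[\A^\ell/\G_m^\ell]$, hence transverse to the boundary and to each $\omega_I^{-1}(1)$. So the $D_i$ and $d_i$ are unchanged, and each $\omega_I^{-1}(1)$ undergoes only a $G$-birational modification.

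Your extra refinement is unnecessary in any case. If $H$ acted by a nontrivial character on a local equation of $D_i$ at a point of a component of $\cX^H$, that component would lie in $D_i$. So the closure of every $W\in\cS(X,G)$ is automatically regular, with special fiber $\bigcup_i(D_i\cap\cW)$ and the same multiplicities.

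Finally, $[\omega_I^{-1}(1)\actsfromright G]^{\mathrm{naive}}$ is a sum of symbols only if $\omega_I^{-1}(1)$ is in divisorial form. This requires showing that $\cX$ is divisorial whenever $X$ is; the paper proves this, and your proposal does not address it.
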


A technical point is that we wish to employ geometric arguments over $k$, but $X$ is defined over $K$.
N\'eron desingularization \cite[\S 3.6]{BLR} lets us replace $\mathfrak{o}$ by a suitable smooth $k[t]$-subalgebra $A$, and $K$ by the fraction field $K_A$ of $A$, thereby enabling direct geometric arguments.
First, $A$ may be chosen with projective $A$-scheme $X_A$, locally a complete intersection with $\cX\cong X_A\times_{\Spec(A)}\Spec(\mathfrak{o})$.
The simple normal crossing boundary translates into a local equation form for $\cX$, which we may assume valid for $X_A$.
Then $X_A$ is smooth over $k$ with simple normal crossing boundary $X_{A/tA}:=X_A\times_{\Spec(A)}\Spec(A/tA)$, and
the same holds when $A$ is replaced by a larger smooth $k[t]$-subalgebra of $\mathfrak{o}$.
By means of monoidal transform we may suppose that $X_{A/tA}$ is equivariantly isomorphic to $ D\times \Spec(A/tA)$ (where $G$ acts trivially on the second factor).

The normal crossing boundary of $X_A$ defines a morphism
\begin{equation}
\label{eqn.XAtoAlmodGml}
[X_A/G]\to [\A^\ell/\G_m^\ell],
\end{equation}
which by \cite[Rmk.\ 3.4]{berghrydh} is smooth.
The stack $[\A^\ell/\G_m^\ell]$ has a dense open substack $[(\A^1\setminus \{0\})^\ell/\G_m^\ell]\cong \Spec(k)$, with
pre-image $X_{A[t^{-1}]}=X_A\setminus X_{A/tA}$ by \eqref{eqn.XAtoAlmodGml}.
The pre-image of $X_{A[t^{-1}]}$ under
$\cX\to X_A$ is $X$.

We observe that
in Theorem \ref{thm.revisitThm6.6} there is no loss of generality in supposing $X$ to be in divisorial form.
For \eqref{eqn.XAtoAlmodGml} there is a relative notion of divisoriality and a functorial procedure to achieve this by iterated blow-up; see \cite[Thm.\ 6.6]{berghrydh}.
Each center of blow-up is smooth over $[\A^\ell/\G_m^\ell]$, i.e., smooth over $k$ and transverse to the normal crossing boundary.
We claim, also, transverse intersection on $\mathcal{N}_{D_I/\cX}^\circ$ with $\omega_I^{-1}(1)$, for every $I$.
This claim does not see the group action; with respect to a local trivialization of $\mathcal{N}_{D_I^\circ/\cX}$, we have $\omega_I$ given as unit times monomial,
so this is clear.
Thus, in Theorem \ref{thm.revisitThm6.6},
relative divisorialification preserves the hypothesis, as well as both sides of the equality
(by birational invariance of the class in the Burnside group).

The notion of divisoriality makes sense also for $\cX$, as explained in \cite[Sect.\ 6]{BnG}: $\cX$, to be divisorial, should have abelian stabilizers with $X$ divisorial, and for every $F\in \mathcal{S}(\cX,G)$ supported in the special fiber, with generic stabilizer group $H$, we should have surjective composite
\[
\Pic(\cX,G)\to 
\rH^1(\mathrm{D}(F), k(F)^\times)
\to \rH^1(H, k(F)^\times)^{\mathrm{D}(F)/H} \to H^\vee.
\]
But if $X$ is divisorial, then $\cX$ is divisorial.
The corresponding assertion for orbifolds is \cite[Lemma 9.1(ii)]{KT-stacks}; we give a proof in the present setting.
Say $X$ is divisorial with respect to linearized line bundles $L_1$, $\dots$, $L_r$.
We suppose $A$, as above, is taken so that the linearized line bundles are restrictions of linearized line bundles $M_1$, $\dots$, $M_r$ on $X_A$.
We apply relative divisorialification
to the map
\[
[X_A/G]\to B\G_m^r\times [\A^\ell/\G_m^\ell]
\]
that combines \eqref{eqn.toproductBGm} and \eqref{eqn.XAtoAlmodGml}.
A numerical quantity that measures the
relative stabilizers, if not identically zero, takes its maximal value on a locus $W$, smooth over $B\G_m^r\times [\A^\ell/\G_m^\ell]$, in particular, transverse to the normal crossing boundary.
By the equivariant isomorphism of
$X_{A/tA}$ with $D\times \Spec(A/tA)$ and the analysis of the numerical quantity in terms of representation theory (cf.\  proof of Proposition \ref{prop.divisorial}),
every component of $W$ is either contained in $X_{A[t^{-1}]}$ or meets every fiber of $X_{A/tA}$ nontrivially.
The image in $\Spec(A)$ of a component of $W$ cannot contain the generic point, but by transversality cannot be equal to $\Spec(A/tA)$.
Thus $W$ is contained in $X_{A[t^{-1}]}$, with $W\times_{X_{A[t^{-1}]}}X=\emptyset$.

The proof of Theorem \ref{thm.revisitThm6.6} will be helped by a more convenient description of $\omega_I$.
The monomial map
\[ [\A^\ell/\G_m^\ell]\to [\A^1/\G_m] \]
with exponents $d_1$, $\dots$, $d_\ell$,
when composed with \eqref{eqn.XAtoAlmodGml},
is the map given by the boundary divisor $X_{A/tA}\subset X_A$.
The chosen uniformizer $t$ supplies a lift of the composite map to $\A^1$ and a corresponding lift of \eqref{eqn.XAtoAlmodGml} to
\begin{equation}
\label{eqn.XAtoAlmodK}
[X_A/G]\to [\A^\ell/\cK],\qquad
\cK:=\ker(\G_m^\ell\to \G_m).
\end{equation}
Away from the boundary, \eqref{eqn.XAtoAlmodK} restricts to
\[ [X_{A[t^{-1}]}/G]\to [(\A^1\setminus \{0\})^\ell/\cK]\cong \A^1\setminus \{0\}, \]
given by $t$.
The morphism \eqref{eqn.XAtoAlmodK} is smooth.
Since formation of the normal bundle commutes with flat base change,
we may describe $\mathcal{N}_{D_I/\cX}$ by base change from $[\A^\ell/\cK]$,
where we have $D_i$ defined by $x_i=0$, and
$D_I$, by $x_i=0$ for all $i\in I$.
Then $\mathcal{N}_{D_I/\A^\ell}\cong \A^\ell$, but now $x_i$, for $i\in I$, is a normal bundle coordinate;
$\omega_I$ is given by $x_1^{d_1}\dots x_\ell^{d_\ell}$.
Summarizing, with $0_I\colon \A^\ell\to \A^\ell$ given by $(x_i)_{i\in \cI}\mapsto (\mathbbm{1}_{\cI\setminus I}(i)x_i)_{i\in \cI}$,
we have the diagram with fiber square
\begin{equation}
\begin{split}
\label{eqn.NDIX}
\xymatrix@C=36pt{
\mathcal{N}_{D_I/\cX} \ar[r]\ar[d] & [\A^\ell/\cK] \ar[r]^(0.58){x_1^{d_1}\cdots x_\ell^{d_\ell}} \ar[d]^{0_I} & \A^1 \\
\cX \ar[r] & [\A^\ell/\cK] \ar[r]^(0.58){x_1^{d_1}\cdots x_\ell^{d_\ell}} & \A^1
}
\end{split}
\end{equation}
where the composite top map is $\omega_I$, and the composite bottom map is $t$.

In the proof of Theorem \ref{thm.revisitThm6.6} we suppress the explicit mention of $X_A$, but its use is implicit in application of facts, valid for smooth morphisms, to a morphism such as $[\cX/G]\to [\A^\ell/\cK]$, where the justification will use the smooth morphism \eqref{eqn.XAtoAlmodK}.

\begin{proof}[Proof of Theorem \ref{thm.revisitThm6.6}]
We use the description of $\mathcal{N}_{D_I/\cX}$, and $\omega_I$, from the diagram \eqref{eqn.NDIX}.
Restriction to $\A^1\setminus \{0\}$ in the top row has the effect of replacing
$\mathcal{N}_{D_I/\cX}$ by $\mathcal{N}_{D_I/\cX}^\circ$ and making the right-hand map an isomorphism.
Stabilizer components in a smooth family are smooth, so
\begin{align}
\begin{split}
\label{eqn.EequalsF}
\sum_{E\in \cS(\mathcal{N}^\circ_{D_I/\cX},G)/G}(&H,Y\actsfromleft k(E\cap \omega_I^{-1}(1)),\beta_E(\mathcal{N}^\circ_{D_I/\cX})) \\
&=\sum_{F\in \cS(\omega_I^{-1}(1),G)/G}(H,Y\actsfromleft k(F),\beta_F(\omega_I^{-1}(1))).
\end{split}
\end{align}

We bridge the gap between $\rho_t^G([X\actsfromright G])$, a sum over
$\cS(X,G)/G$, and the sum over
$\cS(\mathcal{N}^\circ_{D_I/\cX},G)/G$ in \eqref{eqn.EequalsF}, by means of deformation to the normal bundle.
Classically, for a smooth scheme $X$ with a smooth closed subscheme $V$, the construction
\[ \cM_{V/X}^\circ:=B\ell_{V\times\{0\}}(X\times \A^1)\setminus B\ell_{V\times\{0\}}(X\times \{0\}) \]
yields a scheme with morphisms to $X$ and to $\A^1$, such that:
\begin{itemize}
\item The morphism to $\A^1$ is smooth.
\item The fiber over $0$ is $\mathcal{N}_{V/X}$, projecting to $V\subset X$.
\item The fiber over $\A^1\setminus \{0\}$ is $X\times (\A^1\setminus \{0\})$.
\end{itemize}
(Usually $V$ and $X$ are not assumed smooth;
the general case is called deformation to the normal cone.
In the first property ``smooth'' is replaced by ``flat'', and in the second, instead of the normal bundle we have the normal cone.)
We are interested in $D_I$ in $\cX$, but by flatness $\cM_{D_I/\cX}^\circ$ is obtained by base change from $D_I$ in $[\A^\ell/\cK]$.
In the latter case the deformation to the normal bundle yields $[\A^\ell/\cK]\times \A^1$.
Denoting by $u$ the coordinate on the factor $\A^1$, we have morphisms $((x_i)_{i\in \cI},u)\mapsto (u^{\mathbbm{1}_I(i)}x_i)_{i\in \cI}$ to $[\A^\ell/\cK]$ and projection to $\A^1$.

There is the additional morphism $((x_i)_{i\in \cI},u)\mapsto x_1^{d_1}\cdots x_\ell^{d_\ell}$ to $\A^1$.
The additional morphism restricts to $\omega_I$ on the fiber over $0$; we denote it by $\tilde\omega_I$.
The morphism $\tilde\omega_I$ is flat, and the restriction over $\A^1\setminus \{0\}$ is smooth.
Upon restriction we get smooth $(\tilde\omega_I,\mathrm{pr}_2)$ to $(\A^1\setminus \{0\})\times \A^1$, and
subject to the caveat mentioned just before the start of the proof, the same can be asserted
when we map the corresponding open subscheme
of $\cM_{D_I/\cX}^\circ$
to $(\A^1\setminus \{0\})\times \A^1$.
The fiber over $(\A^1\setminus \{0\})\times (\A^1\setminus \{0\})$ is isomorphic to $X\times (\A^1\setminus \{0\})$, and the fiber over $(\A^1\setminus \{0\})\times \{0\}$ is $\cN_{D_I/\cX}^\circ$.

Thus, we have a $G$-equivariant map $\cS(\mathcal{N}_{D_I/\cX}^\circ,G)\to \cS(X,G)$ 
and for $E\in \cS(\mathcal{N}_{D_I/\cX}^\circ,G)$ mapping to
$W\in \cS(X,G)$ we have
$\beta_E(\mathcal{N}_{D_I/\cX}^\circ)=\beta_W(X)$.
For $W\in \cS(X,G)$ the closure $\cW$ in $\cX$ is a regular model of $W$, with normal crossing special fiber $(D_1\cap \cW)\cup\dots\cup (D_\ell\cap \cW)$ and the same multiplicities $d_i$ as for the special fiber $D_1\cup\dots\cup D_\ell$ of $\cX$.
So
\begin{align*}
\rho^G_t&([X\actsfromright G])\\
&=\sum_{\emptyset\ne I\subseteq\cI}(-1)^{|I|-1}\sum_{W\in \cS(X,G)/G}
(H,Y\actsfromleft k(\cW\times_{\cX}\omega_I^{-1}(1)),\beta_W(X))\\
&=\sum_{\emptyset\ne I\subseteq\cI}(-1)^{|I|-1}\sum_{E\in\cS(\mathcal{N}_{D_I/\cX}^\circ,G)/G}(H,Y\actsfromleft k(E\cap \omega_I^{-1}(1)),\beta_E(\mathcal{N}_{D_I/\cX}^\circ)),
\end{align*}
and we conclude by \eqref{eqn.EequalsF}.
\end{proof}

\section{Applications}
\label{sect:apply}

We suppose that $k$ is algebraically closed. 
Our main interest is the study of finite subgroups of the Cremona group $\Cr_n$, i.e., automorphisms of rational varieties, see Section~\ref{sect:basic1}.
Representative examples of rational varieties include:
\begin{itemize}
\item quadric hypersurfaces and quadric bundles over $\bP^1$, 
\item del Pezzo surfaces and rational Fano threefolds,  
\item rational conic bundles over rational surfaces, 
\item singular cubic hypersurfaces (other than cones),
\item toric varieties,
\item equivariant compactifications of linear algebraic groups, 
\item generalized flag varieties. 
\end{itemize}
One of the main problems is the {\em linearization problem}, i.e., the determination whether or not a given action is equivariantly birational to the projectivization of a representation. This is settled in dimension 2 \cite{pinardin}, but is largely open in dimensions $\ge 3$. 

Another important problem is to prove that 
a given $G$-action on a rational variety is 
{\em stably linearizable}. The main tool for this is Lemma~\ref{lemm:non-name} (No-name lemma), applied to various vector bundle constructions. This is particularly interesting when the action is not linearizable.

In this section, we discuss several geometric applications 
of Burnside groups, quotient stacks, and cohomological invariants, with special regard to
\begin{itemize}
\item stabilizer stratification, Amitsur and Brauer groups,
\item filtrations, incompressibles,
\item specialization.
\end{itemize}

\subsection*{Fields of invariants}
An application that we have already seen, of Lemma ~\ref{lemm:non-name} (No-name lemma), is the stable $G$-birationality of projectivized representations from Corollary \ref{coro:no-name}.
By essentially the same argument, the quotients $\bP(V)/G$ and $\bP(W)/G$ are stably birational.
However, the corresponding questions for ($G$-)birationality are more subtle.

First examples of nonbirational linear $G$-actions appeared in \cite{RYinvariant}; these were based on 
Condition {\bf (Det)} in Section~\ref{sect:det}. Further examples, via the Burnside group formalism, can be found 
in \cite{CTZ-p2}   and \cite{tschinkelyangzhang}.

We are not aware of examples of 
nonbirational {\em quotients} $\bP(V)/G$, $\bP(W)/G$, for projectivized representations of the same dimension, where the $G$-actions are generically free.
The same questions can be asked for nonlinear actions, e.g., actions on quadrics. 

\subsection*{Translation actions}

Let $\mathsf G$ be a connected linear algebraic group over $k$; as a variety, $\mathsf G$ is rational. Let $G\subset \mathsf G$ be a finite subgroup. 
When is the natural translation action of $G$ on $\mathsf G$ (stably) linearizable?

The action is stably linearizable, when $\mathsf G$ is {\em special}, i.e., 
$$
\rH^1(K,\mathsf G)=1,\quad \forall\, K/k; 
$$
see, e.g., \cite[Prop. 4.1]{HT-quad}. 

Here we show how to address the stable linearizability problem for the {\em nonspecial} group $$
\mathsf G=\PGL_{n}, \quad n\ge 2.
$$
Let $V$ be a faithful representation of $G$,
and $K:=k(V)^G$ the field of invariants.
We have, by functoriality:
\begin{equation}
\label{eqn.toH1KsfG}
\rH^1(BG,\mathsf G) \to \rH^1([V/G], \mathsf G)\to \rH^1(K,\mathsf G).
\end{equation}
The inclusion of $G$ in $\mathsf G$ supplies an element $\alpha\in \rH^1(BG,\mathsf G)$.

\begin{lemm}
\label{lem.indepV}
If $\alpha$ has trivial image in $\rH^1(K,\mathsf G)$, then $\alpha$ also has trivial image in $\rH^1(k(W)^G,\mathsf G)$ for any other faithful representation $W$ of $G$.
\end{lemm}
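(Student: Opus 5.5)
The plan is to compare $V$ and $W$ through the direct sum $V\oplus W$, using the no-name lemma in its affine (vector bundle) form: since $G$ acts generically freely on $V$, the projection $V\oplus W\to V$ exhibits $V\oplus W$ as a $G$-vector bundle over $V$. Passing to invariant fields, this gives an isomorphism
$$
k(V\oplus W)^G\cong K(y_1,\dots,y_m), \qquad m=\dim W,
$$
a purely transcendental extension of $K$. Symmetrically, $k(V\oplus W)^G$ is purely transcendental over $L:=k(W)^G$. Concretely, after trivializing the bundle $G$-equivariantly over an invariant dense open $U\subset V$ (as in the proof of Lemma~\ref{lemm:non-name}), we have $G$-invariant coordinates $y_1,\dots,y_m$ on the fibers, and hence $(U\times W)/G\cong (U/G)\times \A^m$.

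The key step is to verify that these field inclusions $K\subset k(V\oplus W)^G\supset L$ are compatible with the maps \eqref{eqn.toH1KsfG}. The image of $\alpha$ in $\rH^1(k(V)^G,\mathsf G)$ is the class of the $\mathsf G$-torsor obtained from the $G$-torsor over the generic point of $V/G$ by extending structure group along $G\subset \mathsf G$. That $G$-torsor pulls back, along the dominant projection $(V\oplus W)/G\dashrightarrow V/G$, to the generic $G$-torsor of $V\oplus W$. The same holds for $W$. Functoriality of the composite $\rH^1(BG,\mathsf G)\to\rH^1([X/G],\mathsf G)\to \rH^1(\text{generic point},\mathsf G)$ along the $G$-equivariant projections $V\oplus W\to V$ and $V\oplus W\to W$ then shows the following. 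The image $\alpha_{V\oplus W}$ of $\alpha$ in $\rH^1(k(V\oplus W)^G,\mathsf G)$ is simultaneously the restriction of $\alpha_V\in \rH^1(K,\mathsf G)$ and the restriction of $\alpha_W\in\rH^1(L,\mathsf G)$.

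Now suppose $\alpha_V=0$. Then $\alpha_{V\oplus W}=0$, so the restriction of $\alpha_W$ to $L(y_1,\dots,y_m)$ is trivial. It remains to show that
$$
\rH^1(L,\mathsf G)\to \rH^1(L(y_1,\dots,y_m),\mathsf G)
$$
has trivial kernel. This is the main, though standard, obstacle. By induction it suffices to treat a single variable $L\subset L(y)$.

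For that case I would use the following argument. The torsor class over $L$ becomes trivial over $L(y)$, so the trivializing section exists over a dense open subset of $\A^1_L$. If $L$ is infinite, which it is here since $k$ is algebraically closed, this open subset has an $L$-rational point, and specializing the section at that point trivializes the torsor over $L$. A slicker route for $\mathsf G=\PGL_n$ is to identify $\rH^1(-,\PGL_n)$ with central simple algebras of degree $n$. The algebra $A\otimes_L L(y)$ is split, hence so is $A$, for instance by injectivity of $\Br(L)\to\Br(L(y))$, after which Wedderburn's theorem identifies $A$ with $M_n(L)$. Either way $\alpha_W=0$, which proves the lemma.
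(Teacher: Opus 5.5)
Your proof is correct and is essentially the paper's argument. The no-name lemma for $V\times W\to W$ makes $k(V\oplus W)^G$ purely transcendental over $L=k(W)^G$, compatibly with the maps \eqref{eqn.toH1KsfG}, and the vanishing pulled back from $K$ is then specialized at an $L$-rational point; this is exactly the paper's appeal to Zariski density of $L$-points in $\A^d_L$. One notational slip: the relevant extension of $L$ has $\dim V$ variables, not the $m=\dim W$ variables $y_1,\dots,y_m$, which belong to the extension of $K$ that your argument never actually uses.
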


\begin{proof}
Let $L:=k(W)^G$, and let $\beta$ denote the
image of $\alpha$ in $\rH^1(L,\mathsf G)$.
By the No-name lemma, we have on a suitable invariant open $W^\circ\subset W$ an equivariant identification $V\times W^\circ\cong \A^d\times W^\circ$, $d=\dim(V)$.
Since $\alpha$ is trivial in $\rH^1(K,\mathsf G)$ it is also trivial on a dense open subset of $[V/G]$, and this determines a dense open subset of $\A^d_L$ on which $\beta$ becomes trivial.
Since $L$-points in $\A^d_L$ are Zariski dense, we obtain the vanishing of $\beta$.
\end{proof}

We define
\[ \alpha^{\mathsf G}(G,V)\in \rH^1(k(V)^G,\mathsf G) \]
to be the image of $\alpha$ under \eqref{eqn.toH1KsfG}.
By Lemma \ref{lem.indepV},
the triviality of $\alpha^{\mathsf G}(G,V)$ is independent of the choice of faithful representation $V$.

The argument in the proof of \cite[Prop.\ 4.1]{HT-quad} implies: 

\begin{prop}
\label{prop.asinHTquad}
If $\alpha^{\mathsf G}(G,V)$ is trivial, then $\mathsf G$ is $G$-equivariantly stably linearizable.
\end{prop}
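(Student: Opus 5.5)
The plan is to build an explicit $G$-equivariant fibration linking $\mathsf G$ (with $G$ acting by translation) to a linear representation, and then apply the No-name lemma (Lemma~\ref{lemm:non-name}) twice.

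Take the faithful representation $V$ and consider $\mathsf G\times V$ with the diagonal $G$-action. Project to $V$. On the dense open $V^\circ\subset V$ where $G$ acts freely, this projection descends to a $\mathsf G$-torsor over $V^\circ/G$: it is the twist of the trivial torsor by the $G$-torsor $V^\circ\to V^\circ/G$ via $G\subset\mathsf G$, i.e.\ $(\mathsf G\times V^\circ)/G$. Its class at the generic point is exactly $\alpha^{\mathsf G}(G,V)\in\rH^1(K,\mathsf G)$, by the definition via \eqref{eqn.toH1KsfG}. Triviality of this class gives a rational section over $V^\circ/G$. Equivalently, it gives a $G$-equivariant rational map $s\colon V\dashrightarrow \mathsf G$, with $G$ acting by translation on the target.

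Using $s$, I then define the $G$-birational map
\[
\mathsf G\times V\dashrightarrow \mathsf G\times V,\qquad (g,v)\mapsto (s(v)^{-1}g,\,v).
\]
This converts the diagonal action into an action that is trivial on the $\mathsf G$ factor and linear on $V$. Hence $\mathsf G\times V\sim_G \mathsf G_{\mathrm{triv}}\times V$. Since $\mathsf G$ is rational, the right-hand side is $G$-birational to $\A^{\dim\mathsf G}\times V$ with trivial action on the affine factor, i.e.\ to a linear representation $1^{\dim\mathsf G}\oplus V$.

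On the left-hand side, $\mathsf G\times V$ is a trivial, hence $G$-equivariant, vector bundle over $\mathsf G$. The translation action on $\mathsf G$ is free, so the No-name lemma applies and gives $\mathsf G\times V\sim_G \mathsf G\times\A^{\dim V}$ with trivial action on $\A^{\dim V}$. Passing to projective compactifications, for instance via the projective-bundle version and Corollary~\ref{coro:no-name2}, this yields
\[
\mathsf G\times\bP^{\dim V}\sim_G \bP\bigl(V\oplus 1^{\dim\mathsf G+1}\bigr),
\]
which is stable linearizability.

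The main obstacle is the first step: identifying the generic fiber of $(\mathsf G\times V^\circ)/G\to V^\circ/G$ with the torsor classified by the image of $\alpha$, and checking that the resulting section is $G$-equivariant for the translation action (and not for an inverse or conjugated version). I would handle this by writing the torsor as the contracted product $V^\circ\times^G\mathsf G$ and checking the twisting conventions against the right-action conventions of Section~\ref{sect:basic1}. The remaining steps are routine bookkeeping of compactifications.
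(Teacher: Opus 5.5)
Your proposal is correct and is essentially the argument the paper invokes from \cite[Prop.\ 4.1]{HT-quad}. You compare the two projections of $\mathsf G\times V$ with diagonal $G$-action: over $\mathsf G$ you use the No-name lemma, since translation is free. Over $V$ you use triviality of the generic $\mathsf G$-torsor $(\mathsf G\times V^\circ)/G\to V^\circ/G$ to untwist the action on the $\mathsf G$-factor; in the cited source this triviality comes from $\mathsf G$ being special, and here it comes from the vanishing of $\alpha^{\mathsf G}(G,V)$.
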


In fact, the triviality of $\alpha^{\mathsf G}(G,V)$ is both necessary and sufficient for the stable linearizability of the translation action, and indeed is encoded in the classical obstruction to lifting $G$ to a \emph{linear} representation
\[ G\to \GL_n. \]
There is a natural $\PGL_n$-equivariant compactification
$$
\PGL_n\subset \bP^{n^2-1},
$$
the projectivization of the coordinates of an $n\times n$ matrix. 
Those with a single nonzero column form an invariant linear subspace
$\bP^{n-1}\subset \bP^{n^2-1}$.
Since the inclusion induces an isomorphism on Picard groups, by \cite[Lemma 2.1]{KT-quotient}
we have $\Am^2(\bP^{n-1},G)=\Am^2(\bP^{n^2-1},G)$.
But $\Am^2(\bP^{n-1},G)$ is cyclic, generated by a class $\gamma\in \rH^2(G,k^\times)$ that represents the obstruction to lifting $G\to \PGL_n$
to a linear representation.
If $\gamma=0$,
then such a lift exists, and
by functoriality $\alpha^{\mathsf G}(G,V)$ lies in the image of
\[ \rH^1(K,\GL_n)\to \rH^1(K,\PGL_n), \]
hence is trivial, and
by Proposition \ref{prop.asinHTquad}, $\PGL_n$ is $G$-equivariantly stably linear.
If $\gamma\ne 0$, then we have nontrivial $\Am^2(\bP^{n^2-1},G)$, and this obstructs not only
the $G$-equivariant stable linearizability of $\PGL_n$ but also the $G$-equivariant unirationality.

\subsection*{Condition (A)}
Actions of abelian groups on del Pezzo surfaces have been classified in \cite{blanc-thesis}, in particular, there is a classification of all actions satisfying Condition {\bf (A)}. Such a classification for smooth Fano threefolds can be found in \cite{abban}.

\subsection*{Amitsur and Brauer groups}

The Amitsur invariant $\Am^2(X,G)$ from \cite[Sect.\ 6]{blancfinite} has been computed for rational surfaces \cite[Prop. 6.7]{blancfinite} and can be extracted for smooth Fano threefolds
from the analysis in  \cite{sharma}. 
Computations of $\Am^3(X,G)$ for del Pezzo surfaces can be found in \cite{TZ-uni}.
The related Condition {\bf (T)} (Section \ref{sect:amitsur}) has been investigated for Fano threefolds and toric varieties in \cite{KT-uni}, \cite{TZ-toric}. It would be desirable to undertake a systematic study of $\Am^4(X,G)$. 

An algorithm for computing Brauer groups of stacks $[X/G]$ and smooth projective models of quotient spaces $X/G$ has been presented in \cite{KT-quotient}.

\subsection*{Incompressibles}

The description of incompressible symbols is easy in $\Burn_2(G)$.
There, the incompressible symbols are the divisor symbols 
$(H,Y\actsfromleft k(C),(b))$,
where $C$ is either a curve of positive genus, or $C\cong \bP^1$ is acted on by a \emph{noncyclic} group $Y$.
Already the simplest example
$$
(C_2, \fS_3\actsfromleft k(\bP^1),(1)), 
$$
is useful, e.g., reproving a result of Iskovskikh \cite{isk-s3} about nonbirationality of a linear action and a toric action of $G=C_2\times \fS_3$, see \cite[\S 7.6]{HKTsmall}.

For $\Burn_3(G)$ the situation is more involved.
There are many examples of incompressible symbols, but a full classification is unavailable; it would require the detailed analysis of \cite{DI}. 
Proofs of nonlinerizability of $G$-actions on rational varieties of dimension $\ge 3$, based on incompressibles, can be found for 
\begin{itemize}
\item actions on singular cubic threefolds, e.g., \cite[Exa.\ 2.7]{CTZ-forum}, or
\item the Burkhardt quartic, \cite[Prop.\ 7.2]{CTZ-burk}.
\end{itemize}
Among further applications of incompressible symbols to proofs of nonbirationality of stably birational actions are:
\begin{itemize}
\item on $\bP^3$, see \cite[Thm.\ 11.2]{KT-vector}, or \cite[Exa.\ 8.1]{TYZ-survey}, 
\item 
cubic threefolds and degree 14 Fano threefolds \cite{TZ-14}.
\end{itemize}

\subsection*{Specialization}

An immediate application of Theorem~\ref{thm.revisitThm6.6} is the fact very general members of families of $G$-varieties equivariantly specializing to (mildly singular) nonlinearizable $G$-varieties are also not linearizable \cite[Cor.\ 6.8, Exa.\ 6.9]{BnG}.

This motivated a search for such families. For example, a systematic study of specialization patterns for singular cubic threefolds to cubics with cohomological obstructions to stable linearizability was offered in \cite{CTZ-forum}, \cite{CMTZ}. In particular, there is now a classification of actions on singular cubics $X\subset\bP^4$ failing Condition {\bf (H1)};  these come from specific $C_2$ or $C_3$-actions, see \cite[Sect.\ 2]{CTZ-real}. A representative example of such a specialization is given in \cite[Prop.~4.3]{CTZ-forum}.

\section{What if \dots}
\label{sect:scifi}

\subsection*{Nonabelian stabilizers}
\label{ss.nonabelian}
{\em A priori}, one could try to encode a given regular action of a finite group $G$ by the stabilizer stratification and the induced action in the normal 
bundles, which would decompose into irreducible representations of the stabilizer groups. There are at least two reasons for bypassing this, via passage to divisorial forms: 
\begin{itemize}
\item one would have to keep track of these representations, and it is easier to deal with sums of one-dimensional representations,
\item encoding the interaction of the action of the stabilizers with the residual action on the stratum might involve nonabelian second cohomology \cite{giraud}, which we wanted to avoid. 
\end{itemize}
This explains the importance of achieving abelian stabilizers.

\subsection*{Not in divisorial form}
\label{ss.notdivisorial}
The definition of divisorial form ensures that the formula for the class in the equivariant Burnside group yields a sum of symbols.

\begin{exam}
\label{exa.D8action}
Consider the action of $\mathfrak{D}_4$ on $\bP^3_{\C}$, the projective compactification of the action of Example \ref{exa.notcentral}.
The subvariety $F$ defined by $x=y=0$ has generic stabilizer $H$, cyclic of order $4$, and nontrivial residual action of $\mathfrak{D}_4/H$ on $\C(F)$.
As observed in \cite[Exa.\ 3.5]{BnG}, primitive characters of $H$ do not lift to $\rH^1(\mathfrak{D}_4,\C(F)^\times)$.
\end{exam}

Such lifts of characters are needed in the definition of relation \textbf{(B)}, which is crucial for the equivariant Burnside group.
The definition of symbols (\S \ref{ss.symbols}) restricts to actions $Y\actsfromleft K$, where $Y$ is quotient by $H$ of a subgroup $Y_0$ of $G$, \emph{in which $H$ is central}.
Only then is there the possibility for all characters of $H$ to lift to $\rH^1(Y_0,K^\times)$
(see Proposition \ref{prop.divisorialcentral} and \cite[Lemma 2.1]{KT-struct}).

\subsection*{Tori}
\label{ss.tori}
We suppose that $k$ is algebraically closed.
In principle, the theory developed above can be extended to the case of an algebraic torus $T=\bG_m^d$. 
We can imitate the construction of the equivariant Burnside group and define analogous groups, where the symbols involve subgroups, quotient groups, and sequences of characters. 

For example, there would be an analogous group 
$$
\cB_n(T),
$$
generated by unordered sequences of characters 
$$
\beta = (b_1,\ldots, b_n), \quad b_j \in M = \mathfrak X^*(T),
$$
and subject to the blow-up relation as in Section~\ref{ss.symbols}. 

The first issue is that 
$\cB_n(T)$ has infinitely many generators and relations, so that it is not obvious that one can effectively check whether or not two such symbols are equal in $\cB_n(T)$. 

However, there are geometric reasons for not pursuing the formalism for $\Burn_n(T)$. Indeed, $T$-torsors are Zariski locally trivial, and thus a $T$-action on $X$ is equivariantly birational to a $T$-action on $T\times Y$, with $Y=X/T$, standard action of $T$ on itself, 
and trivial action of $T$ on $Y$. Moreover, actions of $T$ on $X$ and $X'$ are equivariantly birational if and only if the quotients $Y$ and $Y'$ are (non-equivariantly) birational. 

\subsection*{Extensions of finite groups by tori}
Another direction would be to consider algebraic groups fitting into an exact sequence
$$
1\to T\to N\to G\to 1, 
$$
where $G$ is a finite group and $T=\bG_m^d$. 

\begin{exam}
\label{exam:cubic}
Consider the singular cubic threefold $X\subset \bP^4$ given by
$$
x_1x_2x_3+x_3^3+x_3x_4x_5+x_4^3+x_5^3=0, 
$$
with singularities at $[1:0:0:0:0]$ and $[0:1:0:0:0]$, and the action of 
$$
1\to \bG_m\to N\to C_2\to 1, 
$$
given by rescaling 
$$
\eta_a\colon x_1\mapsto a x_1, \quad x_2\mapsto a^{-1}x_2
$$ 
and $\tau\colon x_1 \leftrightarrow x_2$, switching the coordinates.  
The fixed locus  is the smooth curve given by $x_1=x_2=0$, of genus 1.  
We can consider $\langle \eta_a, \tau\rangle$, $a\in k^\times$, defining a family of actions of (infinite, for general $a$) dihedral groups on $X$. When $a$ is a primitive even root of unity, the action is not linearizable, by \cite[Prop.\ 5.17]{CMTZ}.
\end{exam}

\subsection*{Reductive groups}
\label{ss.reductive}
One might inquire, whether the theory developed above could be extended to linear algebraic groups, more general than tori.
Reichstein and Youssin have established the existence of a standard form, after suitable equivariant blow-up \cite{RYessential}.
However, several issues arise:
\begin{itemize}
\item The stabilizer groups are extensions by unipotent groups of diagonalizable groups, so their representation theory is not determined by characters.
\item The stabilizer groups can \emph{vary} in a stratum.
\end{itemize}

\begin{exam}
\label{exa.GL2}
Consider the action of $\GL_2$ by right multiplication on the affine space of $2\times 2$-matrices.
The stabilizer groups of
rank $1$ matrices consist of a whole conjugacy class of $\G_a$-extensions of $\G_m$.
As this stratum has codimension $1$,
there is no possibility to improve the situation by blowing up.
\end{exam}

\bibliographystyle{plain}
\bibliography{burnsurv}

\begin{thebibliography}{100}

\bibitem{abban}
H.~Abban, I.~Cheltsov, T.~Kishimoto, and F.~Mangolte.
\newblock Smooth {F}ano 3-folds satisfying {C}ondition {(A)}, 2025.
\newblock {\tt arXiv:2505.13684}.

\bibitem{abramovichtemkin}
D.~Abramovich and M.~Temkin.
\newblock Functorial factorization of birational maps for qe schemes in
  characteristic 0.
\newblock {\em Algebra Number Theory}, 13(2):379--424, 2019.

\bibitem{ant}
B.~Antieau and L.~Meier.
\newblock The {B}rauer group of the moduli stack of elliptic curves.
\newblock {\em Algebra Number Theory}, 14(9):2295--2333, 2020.

\bibitem{ABP}
A.~Auel, Chr. B\"ohning, and A.~Pirutka.
\newblock Stable rationality of quadric and cubic surface bundle fourfolds.
\newblock {\em Eur. J. Math.}, 4(3):732--760, 2018.

\bibitem{ayoub}
J.~Ayoub.
\newblock A guide to (\'etale) motivic sheaves.
\newblock In {\em Proceedings of the {I}nternational {C}ongress of
  {M}athematicians---{S}eoul 2014. {V}ol. {II}}, pages 1101--1124. Kyung Moon
  Sa, Seoul, 2014.

\bibitem{ayoubrealisation}
J.~Ayoub.
\newblock La r\'ealisation \'etale et les op\'erations de {G}rothendieck.
\newblock {\em Ann. Sci. \'Ec. Norm. Sup\'er. (4)}, 47(1):1--145, 2014.

\bibitem{batyrev}
V.~V. Batyrev.
\newblock Birational {C}alabi-{Y}au {$n$}-folds have equal {B}etti numbers.
\newblock In {\em New trends in algebraic geometry ({W}arwick, 1996)}, volume
  264 of {\em London Math. Soc. Lecture Note Ser.}, pages 1--11. Cambridge
  Univ. Press, Cambridge, 1999.

\bibitem{Bcan}
V.~V. Batyrev.
\newblock Canonical abelianization of finite group actions, 2000.
\newblock {\tt arXiv:math/0009043}.

\bibitem{baylebeauville}
L.~Bayle and A.~Beauville.
\newblock Birational involutions of {${\bf P}^2$}.
\newblock {\em Asian J. Math.}, 4(1):11--17, 2000.
\newblock Kodaira's issue.

\bibitem{Beau}
A.~Beauville.
\newblock Vari\'{e}t\'{e}s de {P}rym et jacobiennes interm\'{e}diaires.
\newblock {\em Ann. Sci. \'{E}cole Norm. Sup. (4)}, 10(3):309--391, 1977.

\bibitem{BCSS}
A.~Beauville, J.-L. Colliot-Th\'{e}l\`ene, J.-J. Sansuc, and
  P.~Swinnerton-Dyer.
\newblock Vari\'{e}t\'{e}s stablement rationnelles non rationnelles.
\newblock {\em Ann. of Math. (2)}, 121(2):283--318, 1985.

\bibitem{beke}
T.~Beke.
\newblock Sheafifiable homotopy model categories.
\newblock {\em Math. Proc. Cambridge Philos. Soc.}, 129(3):447--475, 2000.

\bibitem{BO}
O.~Benoist and J.~C. Ottem.
\newblock Two coniveau filtrations.
\newblock {\em Duke Math. J.}, 170(12):2719--2753, 2021.

\bibitem{BW1}
O.~Benoist and O.~Wittenberg.
\newblock The {C}lemens-{G}riffiths method over non-closed fields.
\newblock {\em Algebr. Geom.}, 7(6):696--721, 2020.

\bibitem{BW2}
O.~Benoist and O.~Wittenberg.
\newblock Intermediate {J}acobians and rationality over arbitrary fields.
\newblock {\em Ann. Sci. \'Ec. Norm. Sup\'er. (4)}, 56(4):1029--1084, 2023.

\bibitem{bergh}
D.~Bergh.
\newblock Functorial destackification of tame stacks with abelian stabilisers.
\newblock {\em Compos. Math.}, 153(6):1257--1315, 2017.

\bibitem{berghrydh}
D.~Bergh and D.~Rydh.
\newblock Functorial destackification and weak factorization of orbifolds,
  2019.
\newblock {\tt arXiv:1905.00872}.

\bibitem{bierstonemilman}
E.~Bierstone and P.~D. Milman.
\newblock Canonical desingularization in characteristic zero by blowing up the
  maximum strata of a local invariant.
\newblock {\em Invent. Math.}, 128(2):207--302, 1997.

\bibitem{blanc-thesis}
J.~Blanc.
\newblock Finite abelian subgroups of the {C}remona group of the plane, 2006.
\newblock Ph.D. Thesis, Universit\'e de Gen\`eve, {\tt arXiv:math/0610368}.

\bibitem{blancsubgroups}
J.~Blanc.
\newblock Elements and cyclic subgroups of finite order of the {C}remona group.
\newblock {\em Comment. Math. Helv.}, 86(2):469--497, 2011.

\bibitem{blancfinite}
J.~Blanc, I.~Cheltsov, A.~Duncan, and Yu. Prokhorov.
\newblock Finite quasisimple groups acting on rationally connected threefolds.
\newblock {\em Math. Proc. Cambridge Philos. Soc.}, 174(3):531--568, 2023.

\bibitem{blanc-quo}
J.~Blanc, S.~Lamy, and S.~Zimmermann.
\newblock Quotients of higher-dimensional {C}remona groups.
\newblock {\em Acta Math.}, 226(2):211--318, 2021.

\bibitem{BogPro}
F.~Bogomolov and Yu. Prokhorov.
\newblock On stable conjugacy of finite subgroups of the plane {C}remona group,
  {I}.
\newblock {\em Cent. Eur. J. Math.}, 11(12):2099--2105, 2013.

\bibitem{B-Brauer}
F.~A. Bogomolov.
\newblock The {B}rauer group of quotient spaces of linear representations.
\newblock {\em Izv. Akad. Nauk SSSR Ser. Mat.}, 51(3):485--516, 688, 1987.

\bibitem{bogomolov}
F.~A. Bogomolov.
\newblock Stable cohomology of groups and algebraic varieties.
\newblock {\em Mat. Sb.}, 183(5):3--28, 1992.

\bibitem{BBT}
Chr. B\"ohning, H.-Chr. Graf~v. Bothmer, and Yu. Tschinkel.
\newblock Equivariant birational geometry of cubic fourfolds and derived
  categories.
\newblock {\em Adv. Math.}, 469:Paper No. 110249, 32, 2025.
\newblock With an appendix by Brendan Hassett.

\bibitem{borisov}
L.~A. Borisov.
\newblock The class of the affine line is a zero divisor in the {G}rothendieck
  ring.
\newblock {\em J. Algebraic Geom.}, 27(2):203--209, 2018.

\bibitem{BG}
L.~A. Borisov and P.~E. Gunnells.
\newblock Wonderful blowups associated to group actions.
\newblock {\em Selecta Math. (N.S.)}, 8(3):373--379, 2002.

\bibitem{BLR}
S.~Bosch, W.~L\"utkebohmert, and M.~Raynaud.
\newblock {\em N\'eron models}, volume~21 of {\em Ergebnisse der Mathematik und
  ihrer Grenzgebiete (3) [Results in Mathematics and Related Areas (3)]}.
\newblock Springer-Verlag, Berlin, 1990.

\bibitem{Brandhorst}
S.~Brandhorst and T.~Hofmann.
\newblock Finite subgroups of automorphisms of {K}3 surfaces.
\newblock {\em Forum Math. Sigma}, 11:Paper No. e54, 57, 2023.

\bibitem{brionmodels}
M.~Brion.
\newblock On models of algebraic group actions.
\newblock {\em Proc. Indian Acad. Sci. Math. Sci.}, 132(2):Paper No. 61, 17,
  2022.

\bibitem{CKK}
L.~Cavenaghi, L.~Katzarkov, and M.~Kontsevich.
\newblock Atoms meet symbols, 2025.
\newblock {\tt arXiv:2509.15831}.

\bibitem{CKT}
A.~Chambert-Loir, M.~Kontsevich, and Yu. Tschinkel.
\newblock Burnside rings and volume forms with logarithmic poles, 2023.
\newblock {\tt arXiv:2301.02899}, to appear in {\em PAMQ}.

\bibitem{CLNS}
A.~Chambert-Loir, J.~Nicaise, and J.~Sebag.
\newblock {\em Motivic integration}, volume 325 of {\em Progress in
  Mathematics}.
\newblock Birkh\"{a}user/Springer, New York, 2018.

\bibitem{CMTZ}
I.~Cheltsov, L.~Marquand, Yu. Tschinkel, and Zh. Zhang.
\newblock Equivariant geometry of singular cubic threefolds, {II}.
\newblock {\em J. Lond. Math. Soc. (2)}, 112(1):Paper No. e70224, 46, 2025.

\bibitem{CSZ}
I.~Cheltsov, A.~Sarikyan, and Z.~Zhuang.
\newblock Birational rigidity and alpha invariants of {F}ano varieties.
\newblock In {\em Higher dimensional algebraic geometry---a volume in honor of
  {V}. {V}. {S}hokurov}, volume 489 of {\em London Math. Soc. Lecture Note
  Ser.}, pages 286--318. Cambridge Univ. Press, Cambridge, 2025.

\bibitem{CS}
I.~Cheltsov and C.~Shramov.
\newblock {\em Cremona groups and the icosahedron}.
\newblock Monographs and Research Notes in Mathematics. CRC Press, Boca Raton,
  FL, 2016.

\bibitem{CTZ-p2}
I.~Cheltsov, Yu. Tschinkel, and Zh. Zhang.
\newblock Conjugacy classes of linear actions in the plane {C}remona group,
  2025.
\newblock {\tt arXiv:2508.09929}.

\bibitem{CTZ-forum}
I.~Cheltsov, Yu. Tschinkel, and Zh. Zhang.
\newblock Equivariant geometry of singular cubic threefolds.
\newblock {\em Forum Math. Sigma}, 13:Paper No. e9, 52, 2025.

\bibitem{CTZ-burk}
I.~Cheltsov, Yu. Tschinkel, and Zh. Zhang.
\newblock Equivariant geometry of the {S}egre cubic and the {B}urkhardt
  quartic.
\newblock {\em Selecta Math. (N.S.)}, 31(1):Paper No. 7, 36, 2025.

\bibitem{CTZ-3}
I.~Cheltsov, Yu. Tschinkel, and Zh. Zhang.
\newblock Equivariant unirationality of {F}ano threefolds, 2025.
\newblock {\tt 2502.19598}.

\bibitem{CTZ-real}
I.~Cheltsov, Yu. Tschinkel, and Zh. Zhang.
\newblock Rationality of singular cubic threefolds over {$\mathbb R$}.
\newblock {\em Adv. Math.}, 487:Paper No. 110756, 2026.

\bibitem{CGR}
V.~Chernousov, P.~Gille, and Z.~Reichstein.
\newblock Resolving {$G$}-torsors by abelian base extensions.
\newblock {\em J. Algebra}, 296(2):561--581, 2006.

\bibitem{choudhurygallauer}
U.~Choudhury and M.~Gallauer Alves~de Souza.
\newblock Homotopy theory of dg sheaves.
\newblock {\em Comm. Algebra}, 47(8):3202--3228, 2019.

\bibitem{CG}
C.~H. Clemens and P.~A. Griffiths.
\newblock The intermediate {J}acobian of the cubic threefold.
\newblock {\em Ann. of Math. (2)}, 95:281--356, 1972.

\bibitem{CT-schrei}
J.-L. Colliot-Th\'el\`ene.
\newblock Introduction to work of {H}assett-{P}irutka-{T}schinkel and
  {S}chreieder.
\newblock In {\em Birational geometry of hypersurfaces}, volume~26 of {\em
  Lect. Notes Unione Mat. Ital.}, pages 111--125. Springer, Cham, 2019.

\bibitem{CTP}
J.-L. Colliot-Th\'{e}l\`ene and A.~Pirutka.
\newblock Hypersurfaces quartiques de dimension 3: non-rationalit\'{e} stable.
\newblock {\em Ann. Sci. \'{E}c. Norm. Sup\'{e}r. (4)}, 49(2):371--397, 2016.

\bibitem{CTSansucDuke}
J.-L. Colliot-Th\'{e}l\`ene and J.-J. Sansuc.
\newblock La descente sur les vari\'{e}t\'{e}s rationnelles. {II}.
\newblock {\em Duke Math. J.}, 54(2):375--492, 1987.

\bibitem{conradgabberprasad}
B.~Conrad, O.~Gabber, and G.~Prasad.
\newblock {\em Pseudo-reductive groups}, volume~26 of {\em New Mathematical
  Monographs}.
\newblock Cambridge University Press, Cambridge, second edition, 2015.

\bibitem{DM}
P.~Deligne and D.~Mumford.
\newblock The irreducibility of the space of curves of given genus.
\newblock {\em Inst. Hautes \'Etudes Sci. Publ. Math.}, (36):75--109, 1969.

\bibitem{deninger}
Ch. Deninger.
\newblock A proper base change theorem for nontorsion sheaves in \'etale
  cohomology.
\newblock {\em J. Pure Appl. Algebra}, 50(3):231--235, 1988.

\bibitem{DI}
I.~V. Dolgachev and V.~A. Iskovskikh.
\newblock Finite subgroups of the plane {C}remona group.
\newblock In {\em Algebra, arithmetic, and geometry: in honor of {Y}u. {I}.
  {M}anin. {V}ol. {I}}, volume 269 of {\em Progr. Math.}, pages 443--548.
  Birkh\"{a}user Boston, Boston, MA, 2009.

\bibitem{EG}
D.~Edidin and W.~Graham.
\newblock Equivariant intersection theory.
\newblock {\em Invent. Math.}, 131(3):595--634, 1998.

\bibitem{GKR}
R.~Garner, M.~K{\c e}dziorek, and E.~Riehl.
\newblock Lifting accessible model structures.
\newblock {\em J. Topol.}, 13(1):59--76, 2020.

\bibitem{giraud}
J.~Giraud.
\newblock {\em Cohomologie non ab\'{e}lienne}.
\newblock Die Grundlehren der mathematischen Wissenschaften, Band 179.
  Springer-Verlag, Berlin-New York, 1971.

\bibitem{HM}
Ch.~D. Hacon, J.~McKernan, and Ch. Xu.
\newblock On the birational automorphisms of varieties of general type.
\newblock {\em Ann. of Math. (2)}, 177(3):1077--1111, 2013.

\bibitem{hassetthyeon}
B.~Hassett and D.~Hyeon.
\newblock Log canonical models for the moduli space of curves: the first
  divisorial contraction.
\newblock {\em Trans. Amer. Math. Soc.}, 361(8):4471--4489, 2009.

\bibitem{HKTconic}
B.~Hassett, A.~Kresch, and Yu. Tschinkel.
\newblock Stable rationality and conic bundles.
\newblock {\em Math. Ann.}, 365(3-4):1201--1217, 2016.

\bibitem{HKTsmall}
B.~Hassett, A.~Kresch, and Yu. Tschinkel.
\newblock Symbols and equivariant birational geometry in small dimensions.
\newblock In {\em Rationality of varieties}, volume 342 of {\em Progr. Math.},
  pages 201--236. Birkh\"{a}user, Cham, 2021.

\bibitem{HPT-3quad}
B.~Hassett, A.~Pirutka, and Yu. Tschinkel.
\newblock Intersections of three quadrics in {$\mathbb P^7$}.
\newblock In {\em Surveys in differential geometry 2017. {C}elebrating the 50th
  anniversary of the {J}ournal of {D}ifferential {G}eometry}, volume~22 of {\em
  Surv. Differ. Geom.}, pages 259--274. Int. Press, Somerville, MA, 2018.

\bibitem{HPT-quadric}
B.~Hassett, A.~Pirutka, and Yu. Tschinkel.
\newblock Stable rationality of quadric surface bundles over surfaces.
\newblock {\em Acta Math.}, 220(2):341--365, 2018.

\bibitem{HPT-quartic}
B.~Hassett, A.~Pirutka, and Yu. Tschinkel.
\newblock A very general quartic double fourfold is not stably rational.
\newblock {\em Algebr. Geom.}, 6(1):64--75, 2019.

\bibitem{HT-Fano}
B.~Hassett and Yu. Tschinkel.
\newblock On stable rationality of {F}ano threefolds and del {P}ezzo
  fibrations.
\newblock {\em J. Reine Angew. Math.}, 751:275--287, 2019.

\bibitem{HT-cycle}
B.~Hassett and Yu. Tschinkel.
\newblock Cycle class maps and birational invariants.
\newblock {\em Comm. Pure Appl. Math.}, 74(12):2675--2698, 2021.

\bibitem{HT-rat}
B.~Hassett and Yu. Tschinkel.
\newblock Rationality of complete intersections of two quadrics over nonclosed
  fields.
\newblock {\em Enseign. Math.}, 67(1-2):1--44, 2021.
\newblock With an appendix by Jean-Louis Colliot-Th\'{e}l\`ene.

\bibitem{HTtors}
B.~Hassett and Yu. Tschinkel.
\newblock Torsors and stable equivariant birational geometry.
\newblock {\em Nagoya Math. J.}, 250:275--297, 2023.

\bibitem{HT-quad}
B.~Hassett and Yu. Tschinkel.
\newblock Equivariant geometry of low-dimensional quadrics.
\newblock {\em Taiwanese J. Math.}, 29(6):1381--1402, 2025.

\bibitem{hoshiyamasaki}
A.~Hoshi and A.~Yamasaki.
\newblock Rationality problem for algebraic tori.
\newblock {\em Mem. Amer. Math. Soc.}, 248(1176):v+215, 2017.

\bibitem{hoveyreptheory}
M.~Hovey.
\newblock Cotorsion pairs, model category structures, and representation
  theory.
\newblock {\em Math. Z.}, 241(3):553--592, 2002.

\bibitem{huberkahn}
A.~Huber and B.~Kahn.
\newblock The slice filtration and mixed {T}ate motives.
\newblock {\em Compos. Math.}, 142(4):907--936, 2006.

\bibitem{Huy}
D.~Huybrechts.
\newblock {\em Lectures on {K}3 surfaces}, volume 158 of {\em Cambridge Studies
  in Advanced Mathematics}.
\newblock Cambridge University Press, Cambridge, 2016.

\bibitem{iitaka}
S.~Iitaka.
\newblock {\em Algebraic geometry}, volume~24 of {\em North-Holland
  Mathematical Library}.
\newblock Springer-Verlag, New York-Berlin, 1982.
\newblock An introduction to birational geometry of algebraic varieties,
  Graduate Texts in Mathematics, 76.

\bibitem{isk-s3}
V.~A. Iskovskikh.
\newblock Two non-conjugate embeddings of {$S_3\times Z_2$} into the {C}remona
  group. {II}.
\newblock In {\em Algebraic geometry in {E}ast {A}sia---{H}anoi 2005},
  volume~50 of {\em Adv. Stud. Pure Math.}, pages 251--267. Math. Soc. Japan,
  Tokyo, 2008.

\bibitem{MI-lu}
V.~A. Iskovskikh and Yu.~I. Manin.
\newblock Three-dimensional quartics and counterexamples to the {L}\"uroth
  problem.
\newblock {\em Mat. Sb. (N.S.)}, 86(128):140--166, 1971.

\bibitem{KN-I}
B.~Kahn and Nguyen T.~K. Ngan.
\newblock Sur l'espace classifiant d'un groupe alg\'{e}brique lin\'{e}aire,
  {I}.
\newblock {\em J. Math. Pures Appl. (9)}, 102(5):972--1013, 2014.

\bibitem{kameko}
M.~Kameko.
\newblock Coniveau filtrations with {$\bZ/2$}-coefficients, 2025.
\newblock {\tt arXiv:2504.19388}.

\bibitem{P-cyclic}
Zh.-L. Ko{$\mathrm{l}^\prime$}\"e-Tel\`en and E.~V. Piryutko.
\newblock Cyclic covers that are not stably rational.
\newblock {\em Izv. Ross. Akad. Nauk Ser. Mat.}, 80(4):35--48, 2016.

\bibitem{KPT}
M.~Kontsevich, V.~Pestun, and Yu. Tschinkel.
\newblock Equivariant birational geometry and modular symbols.
\newblock {\em J. Eur. Math. Soc. (JEMS)}, 25(1):153--202, 2023.

\bibitem{KT}
M.~Kontsevich and Yu. Tschinkel.
\newblock Specialization of birational types.
\newblock {\em Invent. Math.}, 217(2):415--432, 2019.

\bibitem{KTT}
A.~Kresch, S.~Tanimoto, and Yu. Tschinkel.
\newblock Intermediate {J}acobians and {B}urnside invariants, 2025.
\newblock {\tt arXiv:2511.07101}, to appear in J. Math. Soc. Japan.

\bibitem{KT-SRBS}
A.~Kresch and Yu. Tschinkel.
\newblock Stable rationality of {B}rauer-{S}everi surface bundles.
\newblock {\em Manuscripta Math.}, 161(1-2):1--14, 2020.

\bibitem{Bbar}
A.~Kresch and Yu. Tschinkel.
\newblock Birational types of algebraic orbifolds.
\newblock {\em Mat. Sb.}, 212(3):54--67, 2021.

\bibitem{KT-dp}
A.~Kresch and Yu. Tschinkel.
\newblock Cohomology of finite subgroups of the plane {C}remona group, 2022.
\newblock {\tt arXiv:2203.01876}, to appear in Algebraic Geom. and Physics.

\bibitem{BnG}
A.~Kresch and Yu. Tschinkel.
\newblock Equivariant birational types and {B}urnside volume.
\newblock {\em Ann. Sc. Norm. Super. Pisa Cl. Sci. (5)}, 23(2):1013--1052,
  2022.

\bibitem{KT-vector}
A.~Kresch and Yu. Tschinkel.
\newblock Equivariant {B}urnside groups and representation theory.
\newblock {\em Selecta Math. (N.S.)}, 28(4):Paper No. 81, 39, 2022.

\bibitem{KT-stacks}
A.~Kresch and Yu. Tschinkel.
\newblock Birational geometry of {D}eligne-{M}umford stacks, 2023.
\newblock {\tt arXiv:2312.14061}, to appear in Ann. Sc. Norm. Super. Pisa Cl.
  Sci.

\bibitem{KT-toric}
A.~Kresch and Yu. Tschinkel.
\newblock Equivariant {B}urnside groups and toric varieties.
\newblock {\em Rend. Circ. Mat. Palermo (2)}, 72(5):3013--3039, 2023.

\bibitem{KT-struct}
A.~Kresch and Yu. Tschinkel.
\newblock Equivariant {B}urnside groups: structure and operations.
\newblock {\em Taiwanese J. Math.}, 29(6):1507--1529, 2025.

\bibitem{KT-uni}
A.~Kresch and Yu. Tschinkel.
\newblock Equivariant unirationality of toric varieties, 2025.
\newblock {\tt arXiv:2506.07152}.

\bibitem{KT-quotient}
A.~Kresch and Yu. Tschinkel.
\newblock Unramified {B}rauer group of quotient spaces by finite groups.
\newblock {\em J. Algebra}, 664:75--100, 2025.

\bibitem{KuP1}
A.~Kuznetsov and Yu. Prokhorov.
\newblock Rationality of {F}ano threefolds over non-closed fields.
\newblock {\em Amer. J. Math.}, 145(2):335--411, 2023.

\bibitem{KuP2}
A.~Kuznetsov and Yu. Prokhorov.
\newblock Rationality over nonclosed fields of {F}ano threefolds with higher
  geometric {P}icard rank.
\newblock {\em J. Inst. Math. Jussieu}, 23(1):207--247, 2024.

\bibitem{larsenlunts}
M.~Larsen and V.~A. Lunts.
\newblock Motivic measures and stable birational geometry.
\newblock {\em Mosc. Math. J.}, 3(1):85--95, 259, 2003.

\bibitem{manin}
Yu.~I. Manin.
\newblock Rational surfaces over perfect fields.
\newblock {\em Inst. Hautes \'{E}tudes Sci. Publ. Math.}, (30):55--113, 1966.

\bibitem{manin2}
Yu.~I. Manin.
\newblock Rational surfaces over perfect fields. {II}.
\newblock {\em Mat. Sb. (N.S.)}, 72 (114):161--192, 1967.

\bibitem{weibel}
C.~Mazza, V.~Voevodsky, and C.~Weibel.
\newblock {\em Lecture notes on motivic cohomology}, volume~2 of {\em Clay
  Mathematics Monographs}.
\newblock American Mathematical Society, Providence, RI; Clay Mathematics
  Institute, Cambridge, MA, 2006.

\bibitem{milne}
J.~S. Milne.
\newblock {\em \'{E}tale cohomology}.
\newblock Princeton Mathematical Series, No. 33. Princeton University Press,
  Princeton, N.J., 1980.

\bibitem{mumford-git}
D.~Mumford, J.~Fogarty, and F.~Kirwan.
\newblock {\em Geometric invariant theory}, volume~34 of {\em Ergeb. Math.
  Grenzgeb.}
\newblock Berlin: Springer-Verlag, 3rd enl. edition, 1994.

\bibitem{NO-refinement}
J.~Nicaise and J.~C. Ottem.
\newblock A refinement of the motivic volume, and specialization of birational
  types.
\newblock In {\em Rationality of varieties}, volume 342 of {\em Progr. Math.},
  pages 291--322. Birkh\"auser/Springer, Cham, 2021.

\bibitem{NO-tropical}
J.~Nicaise and J.~C. Ottem.
\newblock Tropical degenerations and stable rationality.
\newblock {\em Duke Math. J.}, 171(15):3023--3075, 2022.

\bibitem{nicaiseshinder}
J.~Nicaise and E.~Shinder.
\newblock The motivic nearby fiber and degeneration of stable rationality.
\newblock {\em Invent. Math.}, 217(2):377--413, 2019.

\bibitem{peyre}
E.~Peyre.
\newblock Progr\`es en irrationalit\'e{} [d'apr\`es {C}. {V}oisin, {J}.-{L}.
  {C}olliot-{T}h\'el\`ene, {B}. {H}assett, {A}. {K}resch, {A}. {P}irutka, {B}.
  {T}otaro, {Y}. {T}schinkel et al.].
\newblock Number 407, pages Exp. No. 1123, 91--116. 2019.
\newblock S\'eminaire Bourbaki. Vol. 2016/2017. Expos\'es 1120--1135.

\bibitem{pinardin}
A.~Pinardin, A.~Sarikyan, and E.~Yasinsky.
\newblock Linearization problem for finite subgroups of the plane {C}remona
  group, 2024.
\newblock {\tt arXiv:2412.12022}.

\bibitem{PZ}
A.~Pinardin and Zh. Zhang.
\newblock {$\mathfrak A_5$}-equivariant geometry of quadric threefolds, 2025.
\newblock {\tt arXiv:2508.11496}.

\bibitem{ProSimple}
Yu. Prokhorov.
\newblock Simple finite subgroups of the {C}remona group of rank 3.
\newblock {\em J. Algebraic Geom.}, 21(3):563--600, 2012.

\bibitem{Pro-ECM}
Yu~Prokhorov.
\newblock Finite groups of birational transformations.
\newblock In {\em European {C}ongress of {M}athematics ({P}ortoro{\v z},
  2021)}, pages 413--437. EMS Press, Berlin, 2023.

\bibitem{pro-jordan}
Yu. Prokhorov and C.~Shramov.
\newblock Jordan property for {C}remona groups.
\newblock {\em Amer. J. Math.}, 138(2):403--418, 2016.

\bibitem{pukh}
A.~Pukhlikov.
\newblock {\em Birationally rigid varieties}, volume 190 of {\em Mathematical
  Surveys and Monographs}.
\newblock American Mathematical Society, Providence, RI, 2013.

\bibitem{RYessential}
Z.~Reichstein and B.~Youssin.
\newblock Essential dimensions of algebraic groups and a resolution theorem for
  {$G$}-varieties.
\newblock {\em Canad. J. Math.}, 52(5):1018--1056, 2000.
\newblock With an appendix by J\'{a}nos Koll\'{a}r and Endre Szab\'{o}.

\bibitem{RYinvariant}
Z.~Reichstein and B.~Youssin.
\newblock A birational invariant for algebraic group actions.
\newblock {\em Pacific J. Math.}, 204(1):223--246, 2002.

\bibitem{Schr-2}
S.~Schreieder.
\newblock On the rationality problem for quadric bundles.
\newblock {\em Duke Math. J.}, 168(2):187--223, 2019.

\bibitem{Schr-small-slopes}
S.~Schreieder.
\newblock Stably irrational hypersurfaces of small slopes.
\newblock {\em J. Amer. Math. Soc.}, 32(4):1171--1199, 2019.

\bibitem{sharma}
Sh. Sharma.
\newblock Actions on the {P}icard group of smooth {F}ano threefolds, 2025.
\newblock {\tt arXiv:2511.12447}.

\bibitem{spaltenstein}
N.~Spaltenstein.
\newblock Resolutions of unbounded complexes.
\newblock {\em Compositio Math.}, 65(2):121--154, 1988.

\bibitem{totaro-hyper}
B.~Totaro.
\newblock Hypersurfaces that are not stably rational.
\newblock {\em J. Amer. Math. Soc.}, 29(3):883--891, 2016.

\bibitem{tschinkelyangzhang}
Yu. Tschinkel, K.~Yang, and Zh. Zhang.
\newblock Combinatorial {B}urnside groups.
\newblock {\em Res. Number Theory}, 8(2):Paper No. 33, 17, 2022.

\bibitem{TYZ-survey}
Yu. Tschinkel, K.~Yang, and Zh. Zhang.
\newblock Equivariant birational geometry of linear actions.
\newblock {\em EMS Surv. Math. Sci.}, 11(2):235--276, 2024.

\bibitem{TZ-uni}
Yu. Tschinkel and Zh. Zhang.
\newblock Cohomological obstructions to equivariant unirationality, 2025.
\newblock {\tt arXiv:2504.10204}.

\bibitem{TZ-toric}
Yu. Tschinkel and Zh. Zhang.
\newblock Equivariant unirationality of tori in small dimensions, 2025.
\newblock {\tt arXiv:2509.17008}.

\bibitem{TZ-14}
Yu. Tschinkel and Zh. Zhang.
\newblock Stable equivariant birationalities of cubic and degree 14 {F}ano
  threefolds, 2025.
\newblock {\tt arXiv:2409.08392}, to appear in Algebraic Geom. and Physics.

\bibitem{voevodcohomological}
V.~Voevodsky.
\newblock Cohomological theory of presheaves with transfers.
\newblock In {\em Cycles, transfers, and motivic homology theories}, volume 143
  of {\em Ann. of Math. Stud.}, pages 87--137. Princeton Univ. Press,
  Princeton, NJ, 2000.

\bibitem{voevodtriangulated}
V.~Voevodsky.
\newblock Triangulated categories of motives over a field.
\newblock In {\em Cycles, transfers, and motivic homology theories}, volume 143
  of {\em Ann. of Math. Stud.}, pages 188--238. Princeton Univ. Press,
  Princeton, NJ, 2000.

\bibitem{voisin}
C.~Voisin.
\newblock Unirational threefolds with no universal codimension {$2$} cycle.
\newblock {\em Invent. Math.}, 201(1):207--237, 2015.

\end{thebibliography}

\end{document}